 \theoremstyle{definition}
 \newtheorem*{defn*}{\protect\definitionname}
\theoremstyle{plain}
\newtheorem{thm}{\protect\theoremname}[section]
  \theoremstyle{remark}
  \newtheorem{rem}[thm]{\protect\remarkname}
  \theoremstyle{plain}
  \newtheorem{prop}[thm]{\protect\propositionname}
  \theoremstyle{definition}
  \newtheorem{example}[thm]{\protect\examplename}
  \theoremstyle{plain}
  \newtheorem{lem}[thm]{\protect\lemmaname}
  \theoremstyle{plain}
  \newtheorem{cor}[thm]{\protect\corollaryname}
\newenvironment{keywords}{ \noindent\footnotesize\textbf{Keywords and phrases:}}{}
\newenvironment{class}{\noindent\footnotesize\textbf{Mathematics subject classification 2010:}}{}
\newcommand*{\trace}{\operatorname{trace}}
\newcommand*{\dive}{\operatorname{div}}
\newcommand*{\Grad}{\operatorname{Grad}}
\newcommand*{\Div}{\operatorname{Div}}
\newcommand*{\grad}{\operatorname{grad}}
\renewcommand*{\i}{\mathrm{i}}
\DeclareMathAccent{\Circ}{\mathalpha}{operators}{"17}
\renewcommand{\Re}{\operatorname{\mathfrak{Re}}}
\newcommand{\lspan}{\operatorname{span}}
\renewcommand{\tilde}{\widetilde}
\renewcommand*{\epsilon}{\varepsilon}
\renewcommand*{\rho}{\varrho}
\DeclareMathOperator*{\wlim}{\mathrm{w-lim}}
\author{Sascha Trostorff}
  \providecommand{\corollaryname}{Corollary}
  \providecommand{\definitionname}{Definition}
  \providecommand{\examplename}{Example}
  \providecommand{\lemmaname}{Lemma}
  \providecommand{\propositionname}{Proposition}
  \providecommand{\remarkname}{Remark}
\providecommand{\theoremname}{Theorem}
\begin{document}
\makepreprinttitlepage

\author{ Sascha Trostorff \\ Institut f\"ur Analysis, Fachrichtung Mathematik\\ Technische Universit\"at Dresden\\ Germany\\ sascha.trostorff@tu-dresden.de}

\title{A characterization of boundary conditions yielding maximal monotone
operators.}

\maketitle
\begin{abstract} \textbf{Abstract.} We provide a characterization
for maximal monotone realizations for a certain class of (nonlinear)
operators in terms of their corresponding boundary data spaces. The
operators under consideration naturally arise in the study of evolutionary
problems in mathematical physics. We apply our abstract characterization
result to Port-Hamiltonian systems and a class of frictional boundary
conditions in the theory of contact problems in visco-elasticity.\end{abstract}

\begin{keywords} Maximal monotone operators, boundary data spaces,
nonlinear boundary conditions, Port-Hamiltonian systems, frictional
boundary conditions\end{keywords}

\begin{class} 47B44,47F05,47N20,46N20  \end{class}

\newpage

\tableofcontents{} 

\newpage

\section{Introduction}

As it was shown in several articles (\cite{Picard,Picard2012_Impedance,Picard2012_mother,Picard2013_nonauto,Picard2013_fractional,Trostorff2012_NA,Trostorff2012_nonlin_bd,Trostorff2012_integro,Trostorff2013_nonautoincl})
evolutionary problems in classical mathematical physics can often
be written as a differential equation of the form
\[
\left(\partial_{0}\mathcal{M}+A\right)u=f,
\]
where $u$ is the unknown, $f$ is a given source term, $\partial_{0}$
denotes the temporal derivative, $\mathcal{M}$ is a suitable bounded
operator acting in space-time and $A$ is a maximal monotone (possibly
nonlinear) operator, which frequently is a suitable restriction of
a block operator matrix of the form 
\begin{equation}
\left(\begin{array}{cc}
0 & D\\
G & 0
\end{array}\right),\label{eq:block}
\end{equation}
where $G$ and $D$ are densely defined closed linear operators satisfying
$-G^{\ast}\subseteq D$. 

The aim of this article is to provide a characterization of all maximal
monotone restrictions of \prettyref{eq:block}. This characterization
will be given in terms of the so-called boundary data spaces, introduced
in \cite{Picard2012_comprehensive_control,Picard2012_boundary_control},
associated with the operators $G$ and $D$. Moreover, we give a characterization
of skew-selfadjoint (and hence maximal monotone) restrictions of \prettyref{eq:block},
which is a natural question arising for instance in the study of energy
preserving evolutionary problems (see e.g. \cite{Picard2012_conservative,Picard2012_comprehensive_control}). 

The question of maximal monotone (or m-accretive) realizations of
certain operators or relations was studied in various papers. For
instance in 1959, Phillips \cite{Phillips1959} provides a characterization
of m-accretive realizations of linear operators using indefinite metrics
on Hilbert spaces on the one hand and the Neumann-Cayley transform
on the other hand. Later on these results were generalized to linear
relations in \cite{Dijskma1974}. More recently, in \cite{Arlinskii1995}
we find a characterization result for m-accretive extensions of linear
relations in Hilbert spaces using the theory of Friedrichs- and Neumann-extensions
of symmetric relations \cite{Coddington1978}. Another strategy to
study extensions of operators or relations uses the theory of boundary
triplets or, more general, boundary relations (see e.g. \cite{Derkach2006,Behrndt2009,Derkach2009}).
So, for instance in \cite[Chapter 3]{Hassi2012} the question of m-accretive
extensions of sectorial operators is addressed and a characterization
is given in terms of boundary triplets. To the authors best knowledge
all these strategies are restricted to the case of linear operators
or relations and we emphasize here that our approach also works for
nonlinear realizations.

The article is structured as follows. In Section 2 we recall some
well-known facts on maximal monotone relations and we refer the reader
to \cite{Brezis,hu2000handbook,Morosanu} for a detailed study of
this topic. Moreover, we recall the definition and basic properties
of so-called boundary data spaces (see \cite{Picard2012_comprehensive_control}).
In the third section we prove our main theorem (\prettyref{thm:char_max_mon}):
a characterization of all maximal monotone restrictions of operators
of the form \prettyref{eq:block} in terms of the associated boundary
data spaces. One part of this statement was already proved by the
author in \cite{Trostorff2012_nonlin_bd} but for sake of completeness
we state the proof once again. Moreover we give a characterization
of all skew-selfadjoint restrictions (\prettyref{cor:skew-selfadjoint}).
Section 4 is devoted to the comparison of abstract boundary data spaces
as they were introduced in Section 2 and the classical trace spaces
$H^{\frac{1}{2}}(\partial\Omega)$ and $H^{-\frac{1}{2}}(\partial\Omega)$
for bounded Lipschitz-domains $\Omega\subseteq\mathbb{R}^{n}$ (see
e.g. \cite{necas2011direct}). In particular, we show how classical
boundary conditions of Dirichlet-, Neumann- or Robin-type can be formulated
within the framework of boundary data spaces associated with the operators
$\grad$ and $\dive.$ We conclude the article with two applications.
In the first one we study so-called linear Port-Hamiltonian systems
as they were introduced in \cite{Jacob_Zwart2012,leGorrec2005} and
show, how these systems can be embedded into our abstract setting.
In the second example we consider frictional boundary conditions of
monotone type arising in the theory of contact problems in visco-elasticity
(\cite{Migorski_2009,Migorski2011,Sofonea2009}). Moreover, by means
of this example we illustrate, how to formulate boundary conditions
on different parts of the boundary within our framework.

Throughout, all Hilbert spaces are assumed to be complex and the inner
products are denoted by $\langle\cdot|\cdot\rangle,$ which are assumed
to be linear in the second and conjugate linear in the first argument.
The induced norm is denoted by $|\cdot|.$ Moreover, for a Hilbert
space $H$ and a closed subspace $V\subseteq H$ we denote by $\pi_{V}:H\to V$
the orthogonal projection onto $V$. The adjoint $\pi_{V}^{\ast}:V\to H$
is then the canonical embedding and the projector on $V$ is given
by $P_{V}\coloneqq\pi_{V}^{\ast}\pi_{V}:H\to H.$ The operator $\pi_{V}\pi_{V}^{\ast}:V\to V$
is just the identity on $V$.

\section{The framework}

\subsection{Maximal monotone relations}

In this subsection we recall some well-known facts about maximal monotone
relations.%
\footnote{In the literature, the notion of maximal monotonicity is frequently
defined for set valued mappings, i.e. mappings of the form $A:\mathcal{D}(A)\subseteq H\to\mathcal{P}(H).$
However, we prefer the notion of binary relations $A\subseteq H\oplus H$
instead of set-valued mappings, that is, we identify a set-valued
mapping $A$ with the relation 
\[
\left\{ (u,v)\in H\oplus H\,|\, u\in\mathcal{D}(A),\, v\in A(u)\right\} .
\]
} We refer the reader to the monographs \cite{Brezis,hu2000handbook,Morosanu}
and the references therein for a deeper study of maximal monotone
relations and for the proofs of most of the statements in this subsection.
We introduce some algebraic notions for binary relations in order
to work with binary relations in a comfortable way.
\begin{defn*}
Let $H_{0},H_{1}$ be two Hilbert spaces and $A\subseteq H_{0}\oplus H_{1}.$
For $M\subseteq H_{0}$ we define the \emph{post-set of $M$ under
$A$ }by 
\[
A[M]\coloneqq\left\{ y\in H_{1}\,|\,\exists x\in M:(x,y)\in A\right\} .
\]
Analogously, for $N\subseteq H_{1}$ we define the \emph{pre-set of
$N$ under $A$} by 
\[
[N]A\coloneqq\left\{ x\in H_{0}\,|\,\exists y\in N:(x,y)\in A\right\} .
\]
The \emph{inverse relation} $A^{-1}\subseteq H_{1}\oplus H_{0}$ is
given by 
\[
A^{-1}\coloneqq\left\{ (y,x)\in H_{1}\oplus H_{0}\,|\,(x,y)\in A\right\} .
\]
Moreover, for $B\subseteq H_{0}\oplus H_{1}$ and $\lambda\in\mathbb{C}$
we define 
\[
\lambda A+B\coloneqq\left\{ (x,\lambda y+z)\in H_{0}\oplus H_{1}\,|\,(x,y)\in A\wedge(x,z)\in B\right\} .
\]
The relation $A$ is called \emph{bounded, }if for each bounded set
$M\subseteq H_{0}$ the post-set $A[M]$ is also bounded.\\
Finally we define the \emph{adjoint relation }$A^{\ast}$ of $A$
by 
\[
A^{\ast}=\left\{ (-y,x)\in H_{1}\oplus H_{0}\,|\,(x,y)\in A\right\} ^{\bot_{H_{1}\oplus H_{0}}}\subseteq H_{1}\oplus H_{0}.
\]
\end{defn*}
\begin{rem}
\label{rem:adjoint}We note that $A^{\ast}$ is always a closed linear
relation. Moreover, a pair $(u,v)\in H_{1}\oplus H_{0}$ belongs to
$A^{\ast}$ if and only if 
\[
\langle y|u\rangle_{H_{1}}=\langle x|v\rangle_{H_{0}}
\]
for all pairs $(x,y)\in A$, see e.g. \cite[p.14]{Picard_McGhee}.
\end{rem}
We now give the definition of monotonicity and maximal monotonicity
of binary relations.
\begin{defn*}
Let $A\subseteq H\oplus H$. Then $A$ is called \emph{monotone,}
if for each $(u,v),(x,y)\in A$ the inequality 
\[
\Re\langle u-x|v-y\rangle\geq0
\]
holds. A monotone relation $A$ is called \emph{maximal monotone,
}if there exists no proper monotone extension, i.e. for each monotone
$B\subseteq H\oplus H$ with $A\subseteq B$ it follows that $A=B.$ \end{defn*}
\begin{rem}
\label{rem:demi}A maximal monotone relation $A\subseteq H\oplus H$
is \emph{demi-closed}, i.e. for each sequence $\left(\left(x_{n},y_{n}\right)\right)_{n\in\mathbb{N}}$
in $A$, where $x_{n}\rightharpoonup x$ and $y_{n}\to y$ or $x_{n}\to x$
and $y_{n}\rightharpoonup y$ for some $x,y\in H$ as $n\to\infty$
we have $(x,y)\in A$ (see e.g. \cite[Proposition 1.1]{Morosanu}).
Moreover, for each $x\in H$ the post-set $A[\{x\}]$ is closed and
convex.
\end{rem}
Classical examples of maximal monotone relations are skew-selfadjoint
operators, non-negative selfadjoint operators and subgradients of
lower semicontinuous, convex functions (see \cite{Rockafellar}).
In 1962 G.Minty proves the following celebrated characterization for
maximal monotonicity.
\begin{thm}[Minty's Theorem, \cite{Minty}]
\label{thm:Minty} Let $A\subseteq H\oplus H$ be monotone. Then
the following statements are equivalent

\begin{enumerate}[(i)]

\item $A$ is maximal monotone,

\item For all $\lambda>0$ the relation%
\footnote{Here, we denote by $1$ the identity on $H$.%
} $1+\lambda A$ is onto, i.e. $(1+\lambda A)[H]=H,$

\item There exists $\lambda>0$ such that $1+\lambda A$ is onto.

\end{enumerate}
\end{thm}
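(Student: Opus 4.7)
The plan is to establish the chain $(ii)\Rightarrow(iii)\Rightarrow(i)\Rightarrow(ii)$. The first implication is trivial; the second is a short algebraic argument exploiting surjectivity; the third---Minty's surjectivity theorem proper---is the substantive content and requires the heaviest machinery.

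For $(iii)\Rightarrow(i)$, I would fix $\lambda>0$ with $(1+\lambda A)[H]=H$ and suppose $(x,y)\in H\oplus H$ is such that $A\cup\{(x,y)\}$ is still monotone; the goal is to deduce $(x,y)\in A$. By surjectivity, choose $(u,v)\in A$ with $u+\lambda v=x+\lambda y$, so $u-x=-\lambda(v-y)$. Testing the augmented monotonicity against $(u,v)$ gives $\Re\langle u-x|v-y\rangle\geq 0$; substituting the linear identity yields $-\lambda|v-y|^{2}\geq 0$, whence $v=y$ and $u=x$, so $(x,y)\in A$. Since no proper monotone extension exists, $A$ is maximal monotone.

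For $(i)\Rightarrow(ii)$, I would first observe that for $(u_{i},v_{i})\in A$, monotonicity implies $|u_{1}-u_{2}+\lambda(v_{1}-v_{2})|^{2}\geq|u_{1}-u_{2}|^{2}$, so the resolvent $J_{\lambda}\coloneqq(1+\lambda A)^{-1}$ is single-valued and $1$-Lipschitz on its range $R_{\lambda}\coloneqq(1+\lambda A)[H]$. This range is closed: for $f_{n}\in R_{\lambda}$ with $f_{n}\to f$, the preimages $u_{n}\coloneqq J_{\lambda}f_{n}$ form a Cauchy sequence converging to some $u$, and $(u_{n},(f_{n}-u_{n})/\lambda)\in A$ converges in norm; demi-closedness (\prettyref{rem:demi}) then gives $(u,(f-u)/\lambda)\in A$, i.e.\ $f\in R_{\lambda}$.

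It remains to prove $R_{\lambda}=H$ for every $\lambda>0$, which is the main obstacle. My plan is a continuation argument: show that $\Lambda\coloneqq\{\lambda>0\mid R_{\lambda}=H\}$ is non-empty, open, and relatively closed in $(0,\infty)$, hence equals $(0,\infty)$. Openness follows by recasting $(1+\mu A)u\ni f$ as the fixed-point equation $u=J_{\lambda}\bigl((1-\tfrac{\lambda}{\mu})u+\tfrac{\lambda}{\mu}f\bigr)$, whose Lipschitz constant $|1-\lambda/\mu|$ is strictly less than one for $\mu$ sufficiently close to $\lambda$, so Banach's fixed-point theorem supplies a solution. Closedness of $\Lambda$ combines the above closedness of $R_{\lambda}$ with a uniform a-priori bound on resolvent preimages and a weak-limit argument that invokes demi-closedness once more. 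Producing a single $\lambda\in\Lambda$ --- the non-emptiness --- is the genuine technical core; it is typically handled by a Galerkin approximation on finite-dimensional subspaces together with Brouwer's fixed-point theorem applied to the closed convex post-sets $A[\{u\}]$ from \prettyref{rem:demi}, and then passing to a weak limit. This non-emptiness step is where I expect to spend the bulk of the effort.
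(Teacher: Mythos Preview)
The paper does not actually supply a proof of \prettyref{thm:Minty}; it is quoted as a classical result with a reference to Minty's original paper, and is used throughout as a black box. So there is nothing on the paper's side to compare against.

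Your outline is essentially sound as a self-contained proof. The argument for $(iii)\Rightarrow(i)$ is clean and correct. For $(i)\Rightarrow(ii)$, two remarks. First, your fixed-point step is stronger than mere openness: since $|1-\lambda/\mu|<1$ whenever $\mu>\lambda/2$, having one $\lambda_{0}\in\Lambda$ immediately gives $(\lambda_{0}/2,\infty)\subseteq\Lambda$, and iterating pushes the lower endpoint to $0$; the separate ``relatively closed'' step is therefore unnecessary. Second, in the closedness-of-$R_{\lambda}$ argument you only use norm convergence in both slots, so plain closedness of $A$ (an immediate consequence of maximality) suffices; invoking demi-closedness is harmless but heavier than needed, and you should check that \prettyref{rem:demi} does not itself rely on \prettyref{thm:Minty} in whatever source you use (it need not---demi-closedness follows directly from maximality and weak--strong continuity of the inner product).

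The genuine work, as you correctly identify, is the non-emptiness of $\Lambda$. Your sketch (Galerkin plus Brouwer, weak limits) is the standard route but is presently only a placeholder: you will need to specify the finite-dimensional truncation, extract the a-priori bound that makes Brouwer applicable, and justify the passage to the limit. This is where the proof lives; the rest is routine.
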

Using this theorem, we can define the Yosida-approximation of maximal
monotone relations.
\begin{defn*}
Let $A\subseteq H\oplus H$ be maximal monotone and $\lambda>0.$
Then we define the mapping $A_{\lambda}:H\to H$ by 
\[
A_{\lambda}(x)=\lambda^{-1}\left(x-(1+\lambda A)^{-1}(x)\right),
\]
the so-called \emph{Yosida-approximation of $A$. }Note that due to
the monotonicity of $A$, the relation $(1+\lambda A)^{-1}$ defines
a Lipschitz-continuous mapping with smallest Lipschitz-constant less
than or equal to $1$ and by \prettyref{thm:Minty} this mapping is
defined on the whole Hilbert space $H$. Consequently, $A_{\lambda}$
is also a Lipschitz-continuous mapping defined on the whole space
$H$.\end{defn*}
\begin{prop}[{see e.g. \cite[Theorem 1.3]{Morosanu}}]
\label{prop:properties_A} Let $A\subseteq H\oplus H$ be maximal
monotone and set%
\footnote{By $P_{A[\{x\}]}$ we denote the orthogonal projection on the closed
convex set $A[\{x\}].$%
} 
\[
A^{0}(x)\coloneqq P_{A[\{x\}]}(0)\quad(x\in[H]A)
\]
the \emph{principal section of $A$}. Then:

\begin{enumerate}[(a)]

\item For each $\lambda>0$ the mapping $A_{\lambda}$ is maximal
monotone,

\item For each $\lambda>0$ and $x\in H$ we have $\left((1+\lambda A)^{-1}(x),A_{\lambda}(x)\right)\in A$,

\item For each $\lambda>0$ and $x\in[H]A$ we have $|A_{\lambda}(x)|\leq|A^{0}(x)|.$

\end{enumerate}
\end{prop}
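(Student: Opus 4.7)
The plan is to prove (b) first, since it amounts to unwinding the definition, and then leverage it for (a) and (c).

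For (b), I would write $u \coloneqq (1+\lambda A)^{-1}(x)$. By definition of this inverse, $x \in (1+\lambda A)[\{u\}]$, so there exists $w \in A[\{u\}]$ with $x = u + \lambda w$. Solving gives $w = \lambda^{-1}(x-u) = A_{\lambda}(x)$, so $(u, A_{\lambda}(x)) \in A$, which is precisely the claim.

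For (a), monotonicity of $A_{\lambda}$ follows by taking $x,y \in H$, letting $u = (1+\lambda A)^{-1}(x)$, $v = (1+\lambda A)^{-1}(y)$, and writing the decomposition $x - y = (u-v) + \lambda(A_{\lambda}(x) - A_{\lambda}(y))$. Then
\[
\Re\langle x-y\,|\,A_{\lambda}(x)-A_{\lambda}(y)\rangle = \Re\langle u-v\,|\,A_{\lambda}(x)-A_{\lambda}(y)\rangle + \lambda\,|A_{\lambda}(x)-A_{\lambda}(y)|^{2},
\]
and the first term is non-negative by monotonicity of $A$ applied to the pairs $(u, A_{\lambda}(x)), (v, A_{\lambda}(y)) \in A$ from part (b). For maximal monotonicity, I would invoke Minty's theorem (\prettyref{thm:Minty}): since $A_{\lambda}$ is Lipschitz and everywhere defined, the equation $x + A_{\lambda}(x) = y$ can be solved for every $y \in H$ either via Banach's fixed point argument on a suitably rescaled map or by directly checking that the everywhere-defined continuous monotone map admits no proper monotone extension (any hypothetical extra pair $(x_{0},y_{0})$ would, by monotonicity against $x = x_{0} + th$ and $t \to 0^{+}$, force $y_{0} = A_{\lambda}(x_{0})$).

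For (c), fix $x \in [H]A$ and set $z \coloneqq A^{0}(x) \in A[\{x\}]$, so $(x,z) \in A$. With $u = (1+\lambda A)^{-1}(x)$, part (b) yields $(u, A_{\lambda}(x)) \in A$, and by construction $x - u = \lambda A_{\lambda}(x)$. Monotonicity of $A$ applied to these two pairs gives
\[
0 \leq \Re\langle x-u\,|\,z - A_{\lambda}(x)\rangle = \lambda\,\Re\langle A_{\lambda}(x)\,|\,z\rangle - \lambda\,|A_{\lambda}(x)|^{2},
\]
so Cauchy--Schwarz produces $|A_{\lambda}(x)|^{2} \leq |A_{\lambda}(x)|\,|z|$, yielding the desired estimate $|A_{\lambda}(x)| \leq |A^{0}(x)|$.

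The only delicate point is the maximality half of (a); once one commits to Minty's criterion the remaining items reduce to careful bookkeeping with the defining relation $x = (1+\lambda A)^{-1}(x) + \lambda A_{\lambda}(x)$ and the monotonicity inequality for $A$.
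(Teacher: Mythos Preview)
Your argument is correct in all three parts: (b) is the right unwinding of the definition, the monotonicity computation in (a) is clean, the maximality via hemicontinuity (continuity plus everywhere-definedness forces any monotonically related extra pair $(x_0,y_0)$ to satisfy $y_0=A_\lambda(x_0)$) is valid, and the estimate in (c) is exactly the standard one. Note that the paper does not supply its own proof of this proposition at all---it merely cites \cite[Theorem~1.3]{Morosanu} and records the statement for later use---so there is nothing to compare against; your write-up is the standard textbook proof and would serve perfectly well as the omitted argument.
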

We conclude this subsection with two statements about the construction
of maximal monotone relations from given ones.
\begin{prop}
\label{prop:dircet_sum_rel}Let $H_{0},H_{1}$ be two Hilbert spaces
and $B_{0}\subseteq H_{0}\oplus H_{0},\, B_{1}\subseteq H_{1}\oplus H_{1}$
maximal monotone relations. Then 
\[
B_{0}\oplus B_{1}\coloneqq\left\{ \left.\left((u,x),(v,y)\right)\in\left(H_{0}\oplus H_{1}\right)^{2}\,\right|\,(u,v)\in B_{0},(x,y)\in B_{1}\right\} 
\]
defines a maximal monotone relation on $H_{0}\oplus H_{1}.$ \end{prop}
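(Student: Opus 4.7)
The plan is to verify the two defining conditions of maximal monotonicity directly, exploiting the fact that the inner product on the direct sum $H_0\oplus H_1$ decouples additively into the inner products on the factors: $\langle (u,x) | (v,y)\rangle_{H_0\oplus H_1} = \langle u|v\rangle_{H_0}+\langle x|y\rangle_{H_1}$.

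First I would check monotonicity. Given two pairs $((u_1,x_1),(v_1,y_1))$ and $((u_2,x_2),(v_2,y_2))$ in $B_0\oplus B_1$, the definition of the direct sum gives $(u_i,v_i)\in B_0$ and $(x_i,y_i)\in B_1$ for $i\in\{1,2\}$. Splitting the real part of the inner product of the differences, monotonicity of $B_0\oplus B_1$ then reduces to a sum of two nonnegative real parts, both of which are nonnegative by the assumed monotonicity of $B_0$ and $B_1$.

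Second, I would use Minty's theorem (\prettyref{thm:Minty}) to obtain maximal monotonicity: it suffices to show that $1+(B_0\oplus B_1)$ is onto $H_0\oplus H_1$. Given $(f,g)\in H_0\oplus H_1$, apply Minty's theorem separately to $B_0$ and $B_1$: there exist $u\in H_0$, $v\in H_0$ with $(u,v)\in B_0$ and $u+v=f$, and $x\in H_1$, $y\in H_1$ with $(x,y)\in B_1$ and $x+y=g$. By the definition of $B_0\oplus B_1$, the pair $((u,x),(v,y))$ lies in $B_0\oplus B_1$ and satisfies $(u,x)+(v,y)=(f,g)$, so $1+(B_0\oplus B_1)$ is onto.

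There is no real obstacle here; the argument is a routine unpacking of definitions. The only point that requires mild care is to make sure the indexing convention of \prettyref{thm:Minty} (which formulates surjectivity of $1+\lambda A$) is applied in the correct direction so that the first component $u$ (respectively $x$) is the one to which $1$ is added, matching how pairs are written in the definition of $B_0\oplus B_1$.
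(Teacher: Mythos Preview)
Your argument is correct and is exactly the standard verification the paper has in mind; the paper itself simply states ``The proof is straightforward and we therefore omit it.'' Your two-step check (componentwise monotonicity via the additive splitting of the inner product, then componentwise surjectivity of $1+\lambda(\cdot)$ via \prettyref{thm:Minty}) is the natural way to fill in that omission, and there is nothing to add.
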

\begin{proof}
The proof is straightforward and we therefore omit it.\end{proof}
\begin{prop}
\label{prop:sim_bd_rel}Let $H_{0},H_{1}$ be two Hilbert spaces and
$B\subseteq H_{0}\oplus H_{0}$ be maximal monotone and bounded. Moreover,
let $T:H_{1}\to H_{0}$ be linear and bounded. If $T[H_{1}]\cap[H_{0}]B\ne\emptyset$,
then 
\[
T^{\ast}BT\coloneqq\left\{ (x,T^{\ast}y)\in H_{1}\oplus H_{1}\,|\,(Tx,y)\in B\right\} 
\]
is maximal monotone.\end{prop}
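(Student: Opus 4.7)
Monotonicity of $T^{\ast}BT$ is immediate from the definition and monotonicity of $B$: for any $(x_{1},T^{\ast}y_{1}),(x_{2},T^{\ast}y_{2})\in T^{\ast}BT$ one has $(Tx_{1},y_{1}),(Tx_{2},y_{2})\in B$, and hence
\[
\Re\langle x_{1}-x_{2}|T^{\ast}y_{1}-T^{\ast}y_{2}\rangle=\Re\langle Tx_{1}-Tx_{2}|y_{1}-y_{2}\rangle\geq 0.
\]
For maximality, the plan is to invoke Minty's theorem (\prettyref{thm:Minty}) and show that $1+T^{\ast}BT$ is onto $H_{1}$.

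Fix $z\in H_{1}$ and pass through the Yosida regularization. For every $\lambda>0$ the map $T^{\ast}B_{\lambda}T:H_{1}\to H_{1}$ is monotone, Lipschitz continuous and everywhere defined, hence maximal monotone, so by \prettyref{thm:Minty} there exists $x_{\lambda}\in H_{1}$ with
\[
x_{\lambda}+T^{\ast}B_{\lambda}(Tx_{\lambda})=z.
\]
Choose $x_{0}\in H_{1}$ with $Tx_{0}\in[H_{0}]B$ (possible by hypothesis) and some $v_{0}\in B[\{Tx_{0}\}]$. Testing the identity against $x_{\lambda}-x_{0}$, using the monotonicity of $B_{\lambda}$ and the bound $|B_{\lambda}(Tx_{0})|\leq|B^{0}(Tx_{0})|$ from \prettyref{prop:properties_A}(c), yields a uniform estimate $|x_{\lambda}|\leq M_{1}$. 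Setting $u_{\lambda}\coloneqq(1+\lambda B)^{-1}(Tx_{\lambda})$, the non-expansivity of $(1+\lambda B)^{-1}$ applied to the pair $Tx_{\lambda}$ and $Tx_{0}+\lambda v_{0}$ gives $|u_{\lambda}|\leq M_{2}$ uniformly in $\lambda\in(0,1]$. Since $(u_{\lambda},B_{\lambda}(Tx_{\lambda}))\in B$ by \prettyref{prop:properties_A}(b) and $B$ is assumed bounded, $B_{\lambda}(Tx_{\lambda})$ lies in the fixed bounded set $B[\overline{B(0,M_{2})}]$; this is the step where the hypothesis that $B$ is bounded is crucial.

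Extracting a subsequence $\lambda\to 0$ we obtain $x_{\lambda}\rightharpoonup x$ in $H_{1}$ and $B_{\lambda}(Tx_{\lambda})\rightharpoonup y$ in $H_{0}$. Since $\lambda B_{\lambda}(Tx_{\lambda})\to 0$ strongly, $u_{\lambda}=Tx_{\lambda}-\lambda B_{\lambda}(Tx_{\lambda})\rightharpoonup Tx$, and the regularized identity passes to the weak limit as $x+T^{\ast}y=z$. The main obstacle is now that both convergences of $(u_{\lambda},B_{\lambda}(Tx_{\lambda}))$ are only weak, so \prettyref{rem:demi} does not apply directly. The remedy is a monotonicity trick: for any $(p,q)\in B$, the inequality $\Re\langle u_{\lambda}-p|B_{\lambda}(Tx_{\lambda})-q\rangle\geq 0$ together with the algebraic identity
\[
\Re\langle u_{\lambda}|B_{\lambda}(Tx_{\lambda})\rangle=\Re\langle x_{\lambda}|z\rangle-|x_{\lambda}|^{2}-\lambda|B_{\lambda}(Tx_{\lambda})|^{2},
\]
obtained by eliminating $T^{\ast}B_{\lambda}(Tx_{\lambda})$ via the defining equation, and weak lower semi-continuity of the norm, give $\Re\langle Tx-p|y-q\rangle\geq 0$ for every $(p,q)\in B$. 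Maximality of $B$ then forces $(Tx,y)\in B$, so $(x,T^{\ast}y)\in T^{\ast}BT$ with $x+T^{\ast}y=z$, and Minty's theorem completes the proof.
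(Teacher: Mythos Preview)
Your argument is correct and follows the paper up to and including the uniform bounds on $x_{\lambda}$, $u_{\lambda}=(1+\lambda B)^{-1}(Tx_{\lambda})$, and $B_{\lambda}(Tx_{\lambda})$. The divergence comes in the passage to the limit. The paper does not settle for weak convergence of $x_{\lambda}$: it proves the Cauchy-type estimate
\[
|x_{\lambda}-x_{\mu}|^{2}\leq 2C^{2}(\lambda+\mu)\qquad\bigl(C=\sup_{\lambda\in(0,1]}|B_{\lambda}(Tx_{\lambda})|\bigr),
\]
obtained by writing $Tx_{\lambda}=\lambda B_{\lambda}(Tx_{\lambda})+(1+\lambda B)^{-1}(Tx_{\lambda})$ and using the monotonicity of $B$ on the resolvent pairs. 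This yields \emph{strong} convergence $x_{\lambda_{n}}\to x$, hence $u_{\lambda_{n}}\to Tx$ strongly, and then the demi-closedness of $B$ from \prettyref{rem:demi} applies directly with one strong and one weak limit. You instead keep only weak convergence of $x_{\lambda}$ and compensate with a Minty-type monotonicity trick: the identity $\Re\langle u_{\lambda}|B_{\lambda}(Tx_{\lambda})\rangle=\Re\langle x_{\lambda}|z\rangle-|x_{\lambda}|^{2}-\lambda|B_{\lambda}(Tx_{\lambda})|^{2}$ together with weak lower semicontinuity of the norm lets you pass to the $\limsup$ in $\Re\langle u_{\lambda}-p|B_{\lambda}(Tx_{\lambda})-q\rangle\geq 0$ and recover $\Re\langle Tx-p|y-q\rangle\geq 0$ for all $(p,q)\in B$. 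Both routes are valid; the paper's buys the extra information that the approximants $x_{\lambda}$ converge strongly (and in fact the whole net, not just a subsequence), while yours is closer to the classical Browder--Minty template and avoids the Cauchy computation.
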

\begin{proof}
The monotonicity of $T^{\ast}BT$ is obvious. For showing the maximal
monotonicity we use Minty's Theorem (\prettyref{thm:Minty}). For
that purpose, let $f\in H_{1}$. Since for each $\lambda>0$ the mapping
$T^{\ast}B_{\lambda}T:H_{1}\to H_{1}$ is monotone and Lipschitz-continuous
(and hence, maximal monotone cf. \cite[Corollary 2.8]{Trostorff2013_nonautoincl}),
we find $x_{\lambda}\in H_{1}$ such that 
\[
x_{\lambda}+T^{\ast}B_{\lambda}(Tx_{\lambda})=f.
\]
We show that the family $(x_{\lambda})_{\lambda>0}$ is bounded. For
doing so let $x^{\ast}\in H_{1}$ such that $Tx^{\ast}\in[H_{0}]B$.
Then 
\begin{align*}
\Re\langle x_{\lambda}-x^{\ast}|f-(x^{\ast}+T^{\ast}B_{\lambda}(Tx^{\ast}))\rangle & =|x_{\lambda}-x^{\ast}|^{2}+\Re\langle x_{\lambda}-x^{\ast}|T^{\ast}B_{\lambda}(Tx_{\lambda})-T^{\ast}B_{\lambda}(Tx^{\ast})\rangle\\
 & \geq|x_{\lambda}-x^{\ast}|^{2},
\end{align*}
due to the monotonicity of $T^{\ast}B_{\lambda}T.$ The latter implies
\begin{align*}
|x_{\lambda}| & \leq|x_{\lambda}-x^{\ast}|+|x^{\ast}|\\
 & \leq|f|+2|x^{\ast}|+\|T^{\ast}\||B_{\lambda}(Tx^{\ast})|\\
 & \leq|f|+2|x^{\ast}|+\|T^{\ast}\||B^{0}(Tx^{\ast})|
\end{align*}
for all $\lambda>0$, where we have used \prettyref{prop:properties_A}
(c). From the boundedness of $(x_{\lambda})_{\lambda>0}$ we derive
the boundedness of $\left((1+\lambda B)^{-1}\left(Tx_{\lambda}\right)\right)_{\lambda\in]0,1]}.$
Indeed, we estimate for all $\lambda\in]0,1]$ 
\begin{align*}
\left|(1+\lambda B)^{-1}\left(Tx_{\lambda}\right)\right| & \leq\left|(1+\lambda B)^{-1}\left(Tx_{\lambda}\right)-(1+\lambda B)^{-1}\left(Tx^{\ast}\right)\right|+|(1+\lambda B)^{-1}\left(Tx^{\ast}\right)|\\
 & \leq\|T\|(|x_{\lambda}|+|x^{\ast}|)+|\lambda B_{\lambda}\left(Tx^{\ast}\right)+Tx^{\ast}|\\
 & \leq\|T\|(|x_{\lambda}|+2|x^{\ast}|)+|B^{0}\left(Tx^{\ast}\right)|.
\end{align*}
Since $\left((1+\lambda B)^{-1}\left(Tx_{\lambda}\right),B_{\lambda}(Tx_{\lambda})\right)\in B$
(see \prettyref{prop:properties_A} (b)) and since $B$ is bounded,
we obtain 
\[
C\coloneqq\sup_{\lambda\in]0,1]}|B_{\lambda}(Tx_{\lambda})|<\infty.
\]
The latter gives that $(B_{\lambda}(Tx_{\lambda}))_{\lambda>0}$ has
a weak convergent subsequence $(B_{\lambda_{n}}(Tx_{\lambda_{n}}))_{n\in\mathbb{N}}$
with $\lambda_{n}\to0$ and we denote its weak limit by $y$. Let
now $\lambda,\mu\in]0,1].$ Then we compute 
\begin{align*}
 & |x_{\lambda}-x_{\mu}|^{2}\\
 & =\Re\langle x_{\lambda}-x_{\mu}|f-T^{\ast}B_{\lambda}(Tx_{\lambda})-(f-T^{\ast}B_{\mu}(Tx_{\mu}))\rangle\\
 & =\Re\langle Tx_{\lambda}-Tx_{\mu}|B_{\mu}(Tx_{\mu})-B_{\lambda}(Tx_{\lambda})\rangle\\
 & =\Re\langle\lambda B_{\lambda}(Tx_{\lambda})+(1-\lambda B)^{-1}(Tx_{\lambda})-\mu B_{\mu}(Tx_{\mu})-(1-\mu B)^{-1}(Tx_{\mu})|B_{\mu}(Tx_{\mu})-B_{\lambda}(Tx_{\lambda})\rangle\\
 & \leq\Re\langle\lambda B_{\lambda}(Tx_{\lambda})-\mu B_{\mu}(Tx_{\mu})|B_{\mu}(Tx_{\mu})-B_{\lambda}(Tx_{\lambda})\rangle\\
 & \leq2C^{2}(\lambda+\mu),
\end{align*}
where we have again used \prettyref{prop:properties_A} (b). Thus
$\left(x_{\lambda_{n}}\right)_{n\in\mathbb{N}}$ is a Cauchy-sequence
and hence, it converges and we denote its limit by $x$. By the continuity
of $T$ we have $Tx_{\lambda_{n}}\to Tx$ as $n\to\infty$ and
\[
\left|(1+\lambda_{n}B)^{-1}\left(Tx_{\lambda_{n}}\right)-Tx\right|\leq\lambda_{n}|B_{\lambda_{n}}(Tx_{\lambda_{n}})|+|Tx_{\lambda_{n}}-Tx|\to0\quad(n\to\infty).
\]
By the demi-closedness of $B$ (see \prettyref{rem:demi}) we get
that $(Tx,y)\in B,$ which implies $(x,T^{\ast}y)\in T^{\ast}BT.$
Moreover 
\begin{align*}
f & =\wlim_{n\to\infty}\left(x_{\lambda_{n}}+T^{\ast}B_{\lambda_{n}}(Tx_{\lambda_{n}})\right)=x+T^{\ast}y,
\end{align*}
or in other words $(x,f)\in1+T^{\ast}BT.$ \end{proof}
\begin{rem}
A similar result was shown by Robinson \cite{Robinson1999} without
imposing boundedness of $B$, but with an additional compatibility
assumption on $T$ and $B$ and assuming the closedness of the ranges
of $T$ and $T^{\ast}.$ 
\end{rem}

\subsection{Boundary data spaces}

In this subsection we recall the notion and some basic properties
of boundary data spaces as they were introduced in \cite{Picard2012_comprehensive_control}.
Throughout, let $H_{0},H_{1}$ be two Hilbert spaces and $G_{c}:\mathcal{D}(G_{c})\subseteq H_{0}\to H_{1},\: D_{c}:\mathcal{D}(D_{c})\subseteq H_{1}\to H_{0}$
be two densely defined, closed linear operators with $G_{c}\subseteq-D_{c}^{\ast}$
and consequently $D_{c}\subseteq-G_{c}^{\ast}.$ We set $G\coloneqq-D_{c}^{\ast}$
and $D\coloneqq-G_{c}^{\ast}.$ 
\begin{example}
As a guiding example for the situation above we set $H_{0}\coloneqq L_{2}(\Omega)$
and $H_{1}\coloneqq L_{2}(\Omega)^{n}$ for some open $\Omega\subseteq\mathbb{R}^{n}.$
We define the gradient $\grad_{c}$ with ``vanishing trace'' as
the closure of 
\begin{align*}
\grad_{c}|_{C_{c}^{\infty}(\Omega)}:C_{c}^{\infty}(\Omega)\subseteq L_{2}(\Omega) & \to L_{2}(\Omega)^{n}\\
\phi & \mapsto(\partial_{i}\phi)_{i\in\{1,\ldots,n\}},
\end{align*}
where we denote by $C_{c}^{\infty}(\Omega)$ the space of arbitrarily
differentiable functions with compact support in $\Omega.$ The domain
of $\grad_{c}$ then coincides with the classical Sobolev space $W_{2,0}^{1}(\Omega).$
Analogously, we define the operator $\dive_{c}$ as the closure of
\begin{align*}
\dive_{c}|_{C_{c}^{\infty}(\Omega)^{n}}:C_{c}^{\infty}(\Omega)^{n}\subseteq L_{2}(\Omega)^{n} & \to L_{2}(\Omega)\\
(\psi_{i})_{i\in\{1,\ldots,n\}} & \mapsto\sum_{i=1}^{n}\partial_{i}\psi_{i}.
\end{align*}
The domain of $\dive_{c}$ then consists of those $L_{2}$-vector
fields whose distributional divergence is an $L_{2}$-function and
which satisfy an abstract Neumann-boundary condition.%
\footnote{Indeed, if $\partial\Omega$ is regular enough then $\psi\in\mathcal{D}(\dive_{c})$
satisfies $N\cdot\psi=0$, where $N$ denotes the unit outward normal
vector field at $\partial\Omega$, see Section 4.%
} We set $\grad\coloneqq-\dive_{c}^{\ast}$ and $\dive\coloneqq-\grad_{c}^{\ast}$
and get $\grad_{c}\subseteq\grad$ as well as $\dive_{c}\subseteq\dive$.
The domains of $\grad$ and $\dive$ are then the maximal sets of
$L_{2}$-functions or -vector fields such that the distributional
gradient or divergence is again an $L_{2}$-vector field or -function,
respectively.
\end{example}
We recall the notion of short Sobolev-chains (see \cite[Section 2.1]{Picard_McGhee}).
\begin{defn*}
Let $C:\mathcal{D}(C)\subseteq H\to H$ be a closed, densely defined
linear operator with $0\in\rho(C).$ Then we denote by $H^{1}(C)$
the Hilbert space given by the domain $\mathcal{D}(C)$ equipped with
the inner product $\langle\cdot|\cdot\rangle_{H^{1}(C)}\coloneqq\langle C\cdot|C\cdot\rangle_{H}.$
Moreover, we set $H^{-1}(C)$ as the completion of $H$ with respect
to the norm induced by the inner product $\langle\cdot|\cdot\rangle_{H^{-1}(C)}\coloneqq\langle C^{-1}\cdot|C^{-1}\cdot\rangle_{H}.$
Then 
\[
H^{1}(C)\hookrightarrow H\hookrightarrow H^{-1}(C)
\]
 with continuous and dense embeddings and we call the triple $(H^{1}(C),H,H^{-1}(C))$
the \emph{short Sobolev-chain associated with $C$.}\end{defn*}
\begin{rem}
It is easy to see that the operator $C:H^{1}(C)\to H$ is unitary.
Moreover, the operator $C:\mathcal{D}(C)\subseteq H\to H^{-1}(C)$
has a unitary extension which we also denote by $C$. \end{rem}
\begin{prop}[{\cite[Lemma 2.1.16]{Picard_McGhee}}]
\label{prop:modulus}Let $G:\mathcal{D}(G)\subseteq H_{0}\to H_{1}$
be a closed densely defined linear operator. Then%
\footnote{Recall that $|G|\coloneqq\sqrt{G^{\ast}G}$ is a self-adjoint operator
and thus, $0\in\rho(|G|+\i)$. %
} $G:H^{1}(|G|+\i)\to H_{1}$ is bounded. Moreover, the operator $G:\mathcal{D}(G)\subseteq H_{0}\to H^{-1}(|G^{\ast}|+\i)$
has a unique bounded extension.\end{prop}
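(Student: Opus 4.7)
My plan is to derive both claims from the spectral-theoretic identities $|Gu|=|\,|G|u\,|$ for $u\in\mathcal{D}(G)=\mathcal{D}(|G|)$ and $|G^{\ast}v|=|\,|G^{\ast}|v\,|$ for $v\in\mathcal{D}(G^{\ast})=\mathcal{D}(|G^{\ast}|)$, together with the functional-calculus bound that $\lambda\mapsto\lambda(\lambda\pm\i)^{-1}$ has supremum one on $[0,\infty)$.

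For the first claim, the underlying set of $H^{1}(|G|+\i)$ is $\mathcal{D}(|G|)=\mathcal{D}(G)$, so for $u\in\mathcal{D}(G)$ I would expand
\[
|(|G|+\i)u|^{2}=\langle(|G|+\i)u\,|\,(|G|+\i)u\rangle=|\,|G|u\,|^{2}+|u|^{2}=|Gu|^{2}+|u|^{2},
\]
with the cross terms $\pm\i\langle|G|u\,|\,u\rangle$ cancelling because $\langle|G|u\,|\,u\rangle$ is real by selfadjointness of $|G|$. This yields $|Gu|\le|(|G|+\i)u|$, so that $G:H^{1}(|G|+\i)\to H_{1}$ has operator norm at most one.

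For the second claim, uniqueness of the extension is automatic because $\mathcal{D}(G)$ is dense in $H_{0}$ and $H_{1}\hookrightarrow H^{-1}(|G^{\ast}|+\i)$ continuously, so I only need an estimate $|(|G^{\ast}|+\i)^{-1}Gu|_{H_{1}}\le C|u|_{H_{0}}$ for $u\in\mathcal{D}(G)$. I would obtain this by duality: for arbitrary $\phi\in H_{1}$, selfadjointness of $|G^{\ast}|$ gives
\[
\langle(|G^{\ast}|+\i)^{-1}Gu\,|\,\phi\rangle=\langle Gu\,|\,(|G^{\ast}|-\i)^{-1}\phi\rangle,
\]
and since $(|G^{\ast}|-\i)^{-1}\phi\in\mathcal{D}(|G^{\ast}|)=\mathcal{D}(G^{\ast})$, transposing to $G$ turns this into $\langle u\,|\,G^{\ast}(|G^{\ast}|-\i)^{-1}\phi\rangle$. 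Combining $|G^{\ast}w|=|\,|G^{\ast}|w\,|$ with the functional-calculus bound $\|\,|G^{\ast}|(|G^{\ast}|-\i)^{-1}\|\le 1$, I bound the last factor by $|\phi|$, so the desired estimate holds with $C=1$.

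The only step that is not a routine manipulation is the isometric identity $|Gu|=|\,|G|u\,|$ and its analogue for $G^{\ast}$; I would invoke the polar decomposition $G=U|G|$, which yields $G^{\ast}=U^{\ast}|G^{\ast}|$ with $U$, respectively $U^{\ast}$, acting isometrically on the closure of the range of $|G|$, respectively of $|G^{\ast}|$. Apart from this standard preliminary I do not foresee any genuine obstacle, since both assertions ultimately reduce to the same selfadjoint calculation performed on either side of the adjoint.
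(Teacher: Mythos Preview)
Your proof is correct. Note, however, that the paper does not supply its own proof of this proposition: it is quoted from \cite[Lemma 2.1.16]{Picard_McGhee} and stated without argument, so there is no in-paper proof to compare against. Your approach via the polar decomposition and the identity $|(|G|+\i)u|^{2}=|Gu|^{2}+|u|^{2}$ is the standard one and would be the natural way to reconstruct the cited lemma.

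One minor remark: the identity $|Gu|=|\,|G|u\,|$ can also be obtained without invoking the polar decomposition explicitly, by noting that for $u\in\mathcal{D}(|G|^{2})=\mathcal{D}(G^{\ast}G)$ one has $|Gu|^{2}=\langle G^{\ast}Gu\,|\,u\rangle=\langle |G|^{2}u\,|\,u\rangle=|\,|G|u\,|^{2}$ and then extending by density of $\mathcal{D}(|G|^{2})$ in $\mathcal{D}(|G|)=\mathcal{D}(G)$ with respect to the graph norm. Either route is fine; your choice via the partial isometry is arguably cleaner since it avoids the density argument.
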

\begin{defn*}[{\cite[Section 5.2]{Picard2012_comprehensive_control}}]
Let $G_{c},D_{c},G$ and $D$ as above. We define 
\begin{align*}
\mathcal{BD}(G) & \coloneqq\mathcal{D}(G_{c})^{\bot_{H^{1}(|G|+\i)}}=\mathcal{N}(1-DG),\\
\mathcal{BD}(D) & \coloneqq\mathcal{D}(D_{c})^{\bot_{H^{1}(|D|+\i)}}=\mathcal{N}(1-GD),
\end{align*}
the so-called \emph{boundary data spaces, }where the orthogonal complements
are taken with respect to the inner products in $H^{1}(|G|+\i)$ and
$H^{1}(|D|+\i),$ respectively.\end{defn*}
\begin{rem}
According to the projection theorem we have 
\begin{align*}
H^{1}(|G|+\i) & =H^{1}(|G_{c}|+\i)\oplus\mathcal{BD}(G),\\
H^{1}(|D|+\i) & =H^{1}(|D_{c}|+\i)\oplus\mathcal{BD}(D).
\end{align*}
This could be interpreted as a decomposition result for elements in
$H^{1}(|G|+\i)$ and $H^{1}(|D|+\i)$ into one part with ``vanishing
trace'' (in $H^{1}(|G_{c}|+\i)$ or $H^{1}(|D_{c}|+\i)$, respectively)
and one part carrying the whole information about the behaviour at
the boundary.
\end{rem}
Finally, we recall the following result from \cite{Picard2012_comprehensive_control}.
\begin{prop}[{\cite[Theorem 5.2]{Picard2012_comprehensive_control}}]
 Let $G_{c},G,D_{c},D$ as above. Then $G[\mathcal{BD}(G)]\subseteq\mathcal{BD}(D)$
and $D[\mathcal{BD}(D)]\subseteq\mathcal{BD}(G).$ Moreover, the operators
\[
\stackrel{\bullet}{G}:\mathcal{BD}(G)\to\mathcal{BD}(D)
\]
and 
\[
\stackrel{\bullet}{D}:\mathcal{BD}(D)\to\mathcal{BD}(G),
\]
defined as the restrictions%
\footnote{In other words we have 
\[
\stackrel{\bullet}{G}=\pi_{\mathcal{BD}(D)}G\pi_{\mathcal{BD}(G)}^{\ast}
\]
and analogously 
\[
\stackrel{\bullet}{D}=\pi_{\mathcal{BD}(G)}D\pi_{\mathcal{BD}(D)}^{\ast}.
\]
} of $G$ and $D$, respectively, are unitary with $\left(\stackrel{\bullet}{G}\right)^{\ast}=\stackrel{\bullet}{D}.$
\end{prop}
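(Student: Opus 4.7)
The plan is to run four short reductions, each nearly immediate once the correct identification is in place.

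First, I would read off the inclusions $G[\mathcal{BD}(G)] \subseteq \mathcal{BD}(D)$ and $D[\mathcal{BD}(D)] \subseteq \mathcal{BD}(G)$ directly from the kernel descriptions of the boundary data spaces. If $u \in \mathcal{BD}(G) = \mathcal{N}(1-DG)$, then in particular $Gu \in \mathcal{D}(D)$ and $DGu = u$; applying $G$ to both sides gives $GD(Gu) = Gu$, i.e.\ $Gu \in \mathcal{N}(1-GD) = \mathcal{BD}(D)$. The other inclusion is symmetric. The same relations already show that $\stackrel{\bullet}{G}$ and $\stackrel{\bullet}{D}$ are set-theoretic inverses of one another: $\stackrel{\bullet}{D}\stackrel{\bullet}{G}u = DGu = u$ for $u \in \mathcal{BD}(G)$, and likewise $\stackrel{\bullet}{G}\stackrel{\bullet}{D}v = v$ for $v \in \mathcal{BD}(D)$.

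The step that carries the real content is the identification of the $H^{1}(|G|+\i)$-inner product with the graph inner product of $G$. Since $|G| = \sqrt{G^{\ast}G}$ is self-adjoint with $\mathcal{D}(|G|) = \mathcal{D}(G)$ and $\||G|u\|^{2} = \langle u\,|\,G^{\ast}Gu\rangle = \|Gu\|^{2}$ for $u \in \mathcal{D}(G)$, polarisation gives $\langle |G|u\,|\,|G|v\rangle = \langle Gu\,|\,Gv\rangle$ for all $u,v \in \mathcal{D}(G)$. Self-adjointness of $|G|$ also makes the two mixed terms in the expansion of $\langle (|G|+\i)u\,|\,(|G|+\i)v\rangle$ cancel, leaving
\[
\langle u\,|\,v\rangle_{H^{1}(|G|+\i)} = \langle Gu\,|\,Gv\rangle_{H_{1}} + \langle u\,|\,v\rangle_{H_{0}},
\]
and the analogous formula on $H^{1}(|D|+\i)$.

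With this identification the isometry claim is a one-line computation. For $u,v \in \mathcal{BD}(G)$, the identity $DGu = u$ (and $DGv = v$) gives
\[
\langle Gu\,|\,Gv\rangle_{H^{1}(|D|+\i)} = \langle DGu\,|\,DGv\rangle_{H_{0}} + \langle Gu\,|\,Gv\rangle_{H_{1}} = \langle u\,|\,v\rangle_{H^{1}(|G|+\i)},
\]
so $\stackrel{\bullet}{G}$ is an isometry, and the symmetric argument does the same for $\stackrel{\bullet}{D}$. Two mutually inverse isometries between Hilbert spaces are automatically unitary, and then $(\stackrel{\bullet}{G})^{\ast} = (\stackrel{\bullet}{G})^{-1} = \stackrel{\bullet}{D}$ follows at once. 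The one genuinely delicate point along the way is ensuring that $\mathcal{D}(|G|) = \mathcal{D}(G)$ and that the cross terms really cancel in the inner-product expansion; everything after that is bookkeeping.
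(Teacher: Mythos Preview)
Your proof is correct. The paper itself does not supply a proof of this proposition; it merely quotes the result from \cite{Picard2012_comprehensive_control}, so there is no in-paper argument to compare against. Your route---reading off the mapping properties from the kernel description $\mathcal{BD}(G)=\mathcal{N}(1-DG)$, identifying the $H^{1}(|G|+\i)$-inner product with the graph inner product of $G$ via polarisation and the self-adjointness of $|G|$, and then checking that $\stackrel{\bullet}{G}$ and $\stackrel{\bullet}{D}$ are mutually inverse isometries---is the standard one and matches what one finds in the cited reference. The only point worth flagging as ``delicate'' you have already named: the identity $\mathcal{D}(|G|)=\mathcal{D}(G)$ with $\||G|u\|=\|Gu\|$, which is the usual consequence of the polar decomposition of a closed densely defined operator.
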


\section{A characterization of maximal monotone realizations}

In this section we give a characterization for maximal monotone realizations
for a certain class of operators in terms of the corresponding boundary
data spaces. As in Subsection 2.2, let $H_{0},H_{1}$ be two Hilbert
spaces and $G_{c}:\mathcal{D}(G_{c})\subseteq H_{0}\to H_{1},\: D_{c}:\mathcal{D}(D_{c})\subseteq H_{1}\to H_{0}$
be two densely defined, closed linear operators with $D_{c}\subseteq-G_{c}^{\ast}$.
We set $D\coloneqq-G_{c}^{\ast}$ and $G\coloneqq-D_{c}^{\ast}$,
which in particular yields $D_{c}\subseteq D$ as well as $G_{c}\subseteq G$.
Let $A:\mathcal{D}(A)\subseteq H_{0}\oplus H_{1}\to H_{0}\oplus H_{1}$
a possibly nonlinear operator with 
\[
A\subseteq\left(\begin{array}{cc}
0 & D\\
G & 0
\end{array}\right).
\]
Recall that we have the following orthogonal decompositions 
\begin{align*}
H^{1}(|G|+\i) & =H^{1}(|G_{c}|+\i)\oplus\mathcal{BD}(G),\\
H^{1}(|D|+\i) & =H^{1}(|D_{c}|+\i)\oplus\mathcal{BD}(D).
\end{align*}
The corresponding projections will be denoted by 
\begin{align*}
\pi_{G_{c}}:H^{1}(|G|+\i) & \to H^{1}(|G_{c}|+\i),\\
\pi_{\mathcal{BD}(G)}:H^{1}(|G|+\i) & \to\mathcal{BD}(G)
\end{align*}
and 
\begin{align*}
\pi_{D_{c}}:H^{1}(|D|+\i) & \to H^{1}(|D_{c}|+\i),\\
\pi_{\mathcal{BD}(D)}:H^{1}(|D|+\i) & \to\mathcal{BD}(D).
\end{align*}
Our main theorem reads as follows.
\begin{thm}
\label{thm:char_max_mon}The operator $A$ is maximal monotone if
and only if there exists a maximal monotone relation $h\subseteq\mathcal{BD}(G)\oplus\mathcal{BD}(G)$
such that 
\begin{equation}
\mathcal{D}(A)=\left\{ (u,v)\in\mathcal{D}(G)\times\mathcal{D}(D)\,\left|\,\left(\pi_{\mathcal{BD}(G)}u,\stackrel{\bullet}{D}\pi_{\mathcal{BD}(D)}v\right)\in h\right.\right\} .\label{eq:D(A)}
\end{equation}

\end{thm}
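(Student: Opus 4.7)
The plan rests on one boundary-term identity. First I would establish that for every $u \in \mathcal{D}(G)$ and $v \in \mathcal{D}(D)$,
\begin{equation}\label{eq:TPplan-key}
\Re\bigl(\langle u | Dv \rangle_{H_0} + \langle v | Gu \rangle_{H_1}\bigr) = \Re \langle \pi_{\mathcal{BD}(G)} u \,|\, \stackrel{\bullet}{D} \pi_{\mathcal{BD}(D)} v \rangle_{\mathcal{BD}(G)}.
\end{equation}
To prove \eqref{eq:TPplan-key} I split $u = u_c + \pi_{\mathcal{BD}(G)} u$ and $v = v_c + \pi_{\mathcal{BD}(D)} v$ along the orthogonal decompositions recalled just before the theorem, use $D = -G_c^*$ and $G = -D_c^*$ to see that the real part of every mixed term involving $u_c$ or $v_c$ is purely imaginary and hence drops out, and rewrite the remaining boundary-boundary term via the unitary identity $\stackrel{\bullet}{G} = (\stackrel{\bullet}{D})^{-1}$ on $\mathcal{BD}(D)$.

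\textbf{Necessity.} Assuming $A$ is maximal monotone, I define
\[
h \coloneqq \bigl\{\bigl(\pi_{\mathcal{BD}(G)} u, \stackrel{\bullet}{D} \pi_{\mathcal{BD}(D)} v\bigr) \,\big|\, (u,v) \in \mathcal{D}(A)\bigr\}.
\]
Monotonicity of $h$ is immediate from \eqref{eq:TPplan-key}. For maximality I take $(x_0, y_0) \in \mathcal{BD}(G) \oplus \mathcal{BD}(G)$ that is monotonically compatible with $h$ and set $u_0 \coloneqq x_0 \in \mathcal{BD}(G) \subseteq \mathcal{D}(G)$ and $v_0 \coloneqq \stackrel{\bullet}{G} y_0 \in \mathcal{BD}(D) \subseteq \mathcal{D}(D)$, so that $\pi_{\mathcal{BD}(G)} u_0 = x_0$ and, by unitarity, $\stackrel{\bullet}{D} \pi_{\mathcal{BD}(D)} v_0 = y_0$; \eqref{eq:TPplan-key} then shows that $((u_0, v_0), (Dv_0, Gu_0))$ is monotonically compatible with $A$, and maximality of $A$ forces $(u_0,v_0) \in \mathcal{D}(A)$, i.e. $(x_0,y_0) \in h$. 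The same computation, now appealing to monotonicity of $h$ rather than of $A$, shows that every $(u,v) \in \mathcal{D}(G) \times \mathcal{D}(D)$ whose boundary data lies in $h$ is monotonically compatible with $A$ and hence already belongs to $\mathcal{D}(A)$; this yields \prettyref{eq:D(A)}.

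\textbf{Sufficiency.} Given $h$ maximal monotone, define $A$ by \prettyref{eq:D(A)}. Monotonicity of $A$ is immediate from \eqref{eq:TPplan-key}. For Minty's surjectivity criterion (\prettyref{thm:Minty}) I replace $h$ by its Yosida approximation $h_\lambda$, which is Lipschitz-continuous and monotone on $\mathcal{BD}(G)$. The change of variables $\hat v \coloneqq v - \stackrel{\bullet}{G} h_\lambda(\pi_{\mathcal{BD}(G)} u)$ identifies the $\lambda$-regularized problem with a bounded Lipschitz monotone perturbation of the skew-selfadjoint operator on the ``Neumann'' domain $\mathcal{D}(G) \times \mathcal{D}(D_c)$; an application of \prettyref{prop:sim_bd_rel} yields a solution $(u_\lambda, v_\lambda)$ of $(1 + A_\lambda)(u_\lambda, v_\lambda) = (f, g)$. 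Testing this equation against $(u_\lambda, v_\lambda) - (u^*, v^*)$ for a reference pair $(u^*, v^*)$ with $(\pi_{\mathcal{BD}(G)} u^*, \stackrel{\bullet}{D} \pi_{\mathcal{BD}(D)} v^*) \in h$ and invoking \eqref{eq:TPplan-key} together with \prettyref{prop:properties_A}, exactly as in the proof of \prettyref{prop:sim_bd_rel}, produces uniform-in-$\lambda$ bounds on $u_\lambda$, $v_\lambda$, $Gu_\lambda$, $Dv_\lambda$ and on $h_\lambda(\pi_{\mathcal{BD}(G)} u_\lambda)$. A weakly convergent subsequence, combined with demi-closedness of $h$ (\prettyref{rem:demi}) and closedness of $G$ and $D$, then produces a limit $(u,v) \in \mathcal{D}(A)$ with $(1 + A)(u,v) = (f,g)$.

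The main obstacle is this last Minty step in the sufficiency direction: setting up the $\lambda$-approximation so that its solvability genuinely follows from \prettyref{prop:sim_bd_rel}, and then extracting reference pairs and $\lambda$-uniform bounds strong enough to survive passage to the weak limit. Everything else is a clean consequence of \eqref{eq:TPplan-key}, Minty's theorem, and the unitarity of the boundary operators $\stackrel{\bullet}{G}$ and $\stackrel{\bullet}{D}$.
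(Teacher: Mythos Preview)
Your key identity \eqref{eq:TPplan-key} is exactly the paper's \prettyref{lem:inner_prod}, and your necessity argument is correct---in fact it is slightly more direct than the paper's, which first proves an auxiliary ``saturation'' lemma (\prettyref{lem:domain_proj}) showing $\mathcal{D}(A)$ is invariant under replacing $(u,v)$ by its boundary projection, and only then defines $h$ and checks its maximal monotonicity via Minty. Your route of lifting a monotonically compatible pair $(x_0,y_0)$ to $(x_0,\stackrel{\bullet}{G}y_0)$ and invoking maximality of $A$ directly is cleaner and avoids that intermediate step.

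The sufficiency direction is where you diverge substantially from the paper, and where your proposal has a genuine gap. The paper does \emph{not} approximate: it shows monotonicity and closedness of $A$, and then proves that $1+A$ has dense range by an explicit construction. For $(f,g)\in\mathcal{D}(G_c)\times\mathcal{D}(D_c)$ it sets
\[
\tilde u\coloneqq (1-DG_c)^{-1}(f-D_c g),\qquad \tilde v\coloneqq (1-G_cD)^{-1}(g-G_c f),
\]
checks $\tilde u+D\tilde v=f$, $\tilde v+G\tilde u=g$, and then corrects the boundary data by adding the single term $\pi_{\mathcal{BD}(G)}^\ast(1+h)^{-1}\bigl(-\stackrel{\bullet}{D}\pi_{\mathcal{BD}(D)}G_c\tilde u\bigr)$ to $\tilde u$ (and the matching $\stackrel{\bullet}{G}$-term to $\tilde v$). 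The resolvent $(1+h)^{-1}$ exists by Minty applied to $h$, so no Yosida regularization, no uniform bounds, and no weak-limit extraction are needed.

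Your Yosida scheme, by contrast, is not obviously well-posed as written. The change of variables $\hat v=v-\stackrel{\bullet}{G}h_\lambda(\pi_{\mathcal{BD}(G)}u)$ introduces a perturbation that depends on $u$ through $\pi_{\mathcal{BD}(G)}$, and this projection is bounded only from $H^1(|G|+\i)$ to $\mathcal{BD}(G)$, not from $H_0$; so the resulting nonlinear term is not a Lipschitz perturbation on $H_0\oplus H_1$, and it is unclear how \prettyref{prop:sim_bd_rel}---which concerns relations of the form $T^\ast BT$ with $T$ bounded linear and $B$ bounded maximal monotone---would yield solvability of the regularized system. You flag this yourself as ``the main obstacle'', and it is a real one: the paper's direct construction sidesteps it entirely.
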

In \cite{Trostorff2012_nonlin_bd} it was proved that $A$ is maximal
monotone if $\mathcal{D}(A)$ is given by \prettyref{eq:D(A)} for
a maximal monotone relation $h$. However, for sake of completeness
we will recall this result below. First we start with the following
observation.
\begin{lem}
\label{lem:inner_prod}Let $(u,v)\in\mathcal{D}(G)\times\mathcal{D}(D).$
Then 
\[
\Re\left\langle \left.\left(\begin{array}{cc}
0 & D\\
G & 0
\end{array}\right)\left(\begin{array}{c}
u\\
v
\end{array}\right)\right|\left(\begin{array}{c}
u\\
v
\end{array}\right)\right\rangle _{H_{0}\oplus H_{1}}=\Re\langle\pi_{\mathcal{BD}(G)}u|\stackrel{\bullet}{D}\pi_{\mathcal{BD}(D)}v\rangle_{\mathcal{BD}(G)}.
\]
\end{lem}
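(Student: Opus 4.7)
The plan is to reduce the identity to a computation purely on boundary-data components. I would first split $u\in\mathcal{D}(G)$ and $v\in\mathcal{D}(D)$ orthogonally, using the decompositions
\begin{align*}
H^{1}(|G|+\i) &= H^{1}(|G_{c}|+\i) \oplus \mathcal{BD}(G), \\
H^{1}(|D|+\i) &= H^{1}(|D_{c}|+\i) \oplus \mathcal{BD}(D),
\end{align*}
and write $u = u_{c} + u_{b}$ with $u_{c}\in\mathcal{D}(G_{c})$ and $u_{b} = \pi_{\mathcal{BD}(G)}^{\ast}\pi_{\mathcal{BD}(G)}u \in \mathcal{BD}(G)$ (and analogously $v = v_{c} + v_{b}$). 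Then I would expand $\langle Dv|u\rangle_{H_{0}} + \langle Gu|v\rangle_{H_{1}}$ bilinearly into eight summands and show that only the boundary-boundary term contributes after taking the real part.

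For the cancellation of the six cross terms I would use the defining adjoint relations $D = -G_{c}^{\ast}$ and $G = -D_{c}^{\ast}$: for all $u\in\mathcal{D}(G)$ and $v_{c}\in\mathcal{D}(D_{c})$,
\[
\langle Gu \mid v_{c}\rangle_{H_{1}} = -\langle u \mid Dv_{c}\rangle_{H_{0}},
\]
and symmetrically for $u_{c}\in\mathcal{D}(G_{c})$, $v\in\mathcal{D}(D)$. Pairing the cross summands, each one appears together with its complex conjugate, so its real part vanishes, leaving only $\Re\bigl(\langle Dv_{b}|u_{b}\rangle_{H_{0}} + \langle Gu_{b}|v_{b}\rangle_{H_{1}}\bigr)$.

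The final step is to recognise this remnant as the right-hand side of the claimed equation. Here I would exploit that $u_{b}\in\mathcal{N}(1-DG)$ and $v_{b}\in\mathcal{N}(1-GD)$, i.e.\ $DGu_{b} = u_{b}$ and $GDv_{b} = v_{b}$, together with the observation that the inner product on $\mathcal{BD}(G)\subseteq H^{1}(|G|+\i)$ is the graph inner product of $G$: for $x,y\in\mathcal{D}(G)$,
\[
\langle x \mid y\rangle_{H^{1}(|G|+\i)} = \langle x\mid y\rangle_{H_{0}} + \langle Gx\mid Gy\rangle_{H_{1}}.
\]
Since $\stackrel{\bullet}{D}$ is just the restriction of $D$ to $\mathcal{BD}(D)$, this yields
\[
\langle u_{b} \mid \stackrel{\bullet}{D}v_{b}\rangle_{\mathcal{BD}(G)} = \langle u_{b}\mid Dv_{b}\rangle_{H_{0}} + \langle Gu_{b}\mid GDv_{b}\rangle_{H_{1}} = \langle u_{b}\mid Dv_{b}\rangle_{H_{0}} + \langle Gu_{b}\mid v_{b}\rangle_{H_{1}},
\]
and taking real parts closes the argument. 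The main obstacle I anticipate is simply the careful bookkeeping of the three inner products ($H_{0}$, $H_{1}$ and $\mathcal{BD}(G)$) and the identification of the abstract projection $\pi_{\mathcal{BD}(G)}u$ with its concrete embedded representative $u_{b}$; once one notices that the $H^{1}(|G|+\i)$-inner product coincides with the graph inner product of $G$, the whole computation collapses.
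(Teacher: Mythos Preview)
Your proposal is correct and follows essentially the same route as the paper: decompose $u$ and $v$ along the boundary-data splittings, use the adjoint relations $G_{c}\subseteq -D^{\ast}$ and $D_{c}\subseteq -G^{\ast}$ to kill the real parts of the cross terms, and then identify the surviving boundary--boundary contribution via $GDv_{b}=v_{b}$ together with the fact that the $H^{1}(|G|+\i)$-inner product is the graph inner product of $G$. The only cosmetic difference is that the paper decomposes in two stages (first $v$, cancel, then $u$, cancel) rather than expanding all eight summands at once; the underlying mechanism is identical.
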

\begin{proof}
We compute 
\begin{align*}
 & \Re\left\langle \left.\left(\begin{array}{cc}
0 & D\\
G & 0
\end{array}\right)\left(\begin{array}{c}
u\\
v
\end{array}\right)\right|\left(\begin{array}{c}
u\\
v
\end{array}\right)\right\rangle _{H_{0}\oplus H_{1}}\\
 & =\Re\langle Dv|u\rangle_{H_{0}}+\Re\langle Gu|v\rangle_{H_{1}}\\
 & =\Re\langle D_{c}\pi_{D_{c}}^{\ast}\pi_{D_{c}}v|u\rangle_{H_{0}}+\Re\langle D\pi_{\mathcal{BD}(D)}^{\ast}\pi_{\mathcal{BD}(D)}v|u\rangle_{H_{0}}\\
 & \quad+\Re\langle Gu|\pi_{D_{c}}^{\ast}\pi_{D_{c}}v\rangle_{H_{1}}+\Re\langle Gu|\pi_{\mathcal{BD}(D)}^{\ast}\pi_{\mathcal{BD}(D)}v\rangle_{H_{1}}\\
 & =\Re\langle D\pi_{\mathcal{BD}(D)}^{\ast}\pi_{\mathcal{BD}(D)}v|u\rangle_{H_{0}}+\Re\langle Gu|\pi_{\mathcal{BD}(D)}^{\ast}\pi_{\mathcal{BD}(D)}v\rangle_{H_{1}}\\
 & =\Re\langle D\pi_{\mathcal{BD}(D)}^{\ast}\pi_{\mathcal{BD}(D)}v|\pi_{G_{c}}^{\ast}\pi_{G_{c}}u\rangle_{H_{0}}+\Re\langle D\pi_{\mathcal{BD}(D)}^{\ast}\pi_{\mathcal{BD}(D)}v|\pi_{\mathcal{BD}(G)}^{\ast}\pi_{\mathcal{BD}(G)}u\rangle_{H_{0}}\\
 & \quad+\Re\langle G_{c}\pi_{G_{c}}^{\ast}\pi_{G_{c}}u|\pi_{\mathcal{BD}(D)}^{\ast}\pi_{\mathcal{BD}(D)}v\rangle_{H_{1}}+\Re\langle G\pi_{\mathcal{BD}(G)}^{\ast}\pi_{\mathcal{BD}(G)}u|\pi_{\mathcal{BD}(D)}^{\ast}\pi_{\mathcal{BD}(D)}v\rangle_{H_{1}}\\
 & =\Re\langle D\pi_{\mathcal{BD}(D)}^{\ast}\pi_{\mathcal{BD}(D)}v|\pi_{\mathcal{BD}(G)}^{\ast}\pi_{\mathcal{BD}(G)}u\rangle_{H_{0}}+\Re\langle G\pi_{\mathcal{BD}(G)}^{\ast}\pi_{\mathcal{BD}(G)}u|\pi_{\mathcal{BD}(D)}^{\ast}\pi_{\mathcal{BD}(D)}v\rangle_{H_{1}}\\
 & =\Re\langle D\pi_{\mathcal{BD}(D)}^{\ast}\pi_{\mathcal{BD}(D)}v|\pi_{\mathcal{BD}(G)}^{\ast}\pi_{\mathcal{BD}(G)}u\rangle_{H_{0}}+\Re\langle G\pi_{\mathcal{BD}(G)}^{\ast}\pi_{\mathcal{BD}(G)}u|GD\pi_{\mathcal{BD}(D)}^{\ast}\pi_{\mathcal{BD}(D)}v\rangle_{H_{1}}\\
 & =\Re\langle\pi_{\mathcal{BD}(G)}^{\ast}\pi_{\mathcal{BD}(G)}u|D\pi_{\mathcal{BD}(D)}^{\ast}\pi_{\mathcal{BD}(D)}v\rangle_{H^{1}(|G|+\i)}\\
 & =\Re\langle\pi_{\mathcal{BD}(G)}u|\stackrel{\bullet}{D}\pi_{\mathcal{BD}(D)}v\rangle_{\mathcal{BD}(G)}.\tag*{\qedhere}
\end{align*}
\end{proof}
\begin{lem}
\label{lem:domain_proj}Let $A$ be maximal monotone and $(u,v)\in\mathcal{D}(G)\times\mathcal{D}(D)$.
Then $(u,v)\in\mathcal{D}(A)$ if and only if $(\pi_{\mathcal{BD}(G)}^{\ast}\pi_{\mathcal{BD}(G)}u,\pi_{\mathcal{BD}(D)}^{\ast}\pi_{\mathcal{BD}(D)}v)\in\mathcal{D}(A).$\end{lem}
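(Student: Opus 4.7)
The pivot is \prettyref{lem:inner_prod}, which expresses the pairing $\Re\langle Dv|u\rangle+\Re\langle Gu|v\rangle$ purely in terms of the boundary projections $\pi_{\mathcal{BD}(G)}u$ and $\pi_{\mathcal{BD}(D)}v$. Applying that lemma to a difference shows that the monotonicity form
\[
\Re\langle(u,v)-(x,y)\mid(Dv-Dy,Gu-Gx)\rangle
\]
likewise only sees the boundary projections of the two involved points. Consequently, replacing $u$ or $v$ by any element of $\mathcal{D}(G)$ or $\mathcal{D}(D)$ with the same boundary projection preserves all inequalities of this type. Both implications will follow by producing a monotone one-point extension of $A$ inside the block operator $\bigl(\begin{smallmatrix}0 & D\\ G & 0\end{smallmatrix}\bigr)$ and invoking the maximality of $A$.

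\textbf{Setup.} Using the decomposition $H^{1}(|G|+\i)=H^{1}(|G_{c}|+\i)\oplus\mathcal{BD}(G)$, set
\[
\tilde u\coloneqq\pi_{\mathcal{BD}(G)}^{\ast}\pi_{\mathcal{BD}(G)}u\in\mathcal{D}(G),\qquad \tilde v\coloneqq\pi_{\mathcal{BD}(D)}^{\ast}\pi_{\mathcal{BD}(D)}v\in\mathcal{D}(D).
\]
Since $\pi_{\mathcal{BD}(G)}\pi_{\mathcal{BD}(G)}^{\ast}$ is the identity on $\mathcal{BD}(G)$ and $\pi_{\mathcal{BD}(G)}$ vanishes on $H^{1}(|G_{c}|+\i)$ by orthogonality, I obtain $\pi_{\mathcal{BD}(G)}\tilde u=\pi_{\mathcal{BD}(G)}u$, and analogously $\pi_{\mathcal{BD}(D)}\tilde v=\pi_{\mathcal{BD}(D)}v$. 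This identity of boundary projections is the only property of $\tilde u,\tilde v$ the proof will need.

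\textbf{Forward direction.} Assume $(u,v)\in\mathcal{D}(A)$ and consider $\tilde A\coloneqq A\cup\{((\tilde u,\tilde v),(D\tilde v,G\tilde u))\}$. Since $\tilde u\in\mathcal{D}(G)$ and $\tilde v\in\mathcal{D}(D)$, the added pair belongs to the block operator, so $\tilde A$ is again a restriction of it; the added pair is trivially monotone against itself, and existing pairs of $A$ remain monotone against each other, so I only need to verify monotonicity between $(x,y)\in\mathcal{D}(A)$ and the new element. Applying \prettyref{lem:inner_prod} to $(\tilde u-x,\tilde v-y)$ and then to $(u-x,v-y)$ gives
\[
\Re\langle(\tilde u-x,\tilde v-y)\mid(D\tilde v-Dy,G\tilde u-Gx)\rangle=\Re\langle\pi_{\mathcal{BD}(G)}(u-x)\mid\stackrel{\bullet}{D}\pi_{\mathcal{BD}(D)}(v-y)\rangle=\Re\langle(u-x,v-y)\mid(Dv-Dy,Gu-Gx)\rangle,
\]
and the last quantity is $\geq 0$ by monotonicity of $A$. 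Thus $\tilde A$ is monotone, and maximality forces $\tilde A=A$, i.e. $(\tilde u,\tilde v)\in\mathcal{D}(A)$. The converse implication is completely symmetric: assuming $(\tilde u,\tilde v)\in\mathcal{D}(A)$, one extends $A$ instead by $((u,v),(Dv,Gu))$ and runs the very same chain of equalities in the opposite order to obtain monotonicity of the extension, whence $(u,v)\in\mathcal{D}(A)$.

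\textbf{Main obstacle.} The argument is essentially bookkeeping. The one point that must be handled carefully is the double use of \prettyref{lem:inner_prod}: one has to recognise that both applications produce the \emph{same} right-hand side precisely because the boundary projections of $u,\tilde u$ (resp.\ $v,\tilde v$) coincide. Once this identification is in place, the monotonicity of the extended relation and its collapse onto $A$ by maximal monotonicity are immediate.
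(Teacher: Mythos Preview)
Your proof is correct and follows essentially the same route as the paper: both arguments apply \prettyref{lem:inner_prod} twice to the difference with an arbitrary $(x,y)\in\mathcal{D}(A)$ to see that the monotonicity form for $(u,v)$ and for $(\tilde u,\tilde v)$ coincide, and then invoke maximality of $A$ to absorb the new point. Your framing as a one-point monotone extension is just a repackaging of the paper's direct computation of the equality of the two monotonicity forms.
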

\begin{proof}
We compute for every $(x,y)\in\mathcal{D}(A)$ using \prettyref{lem:inner_prod}
\begin{align*}
 & \Re\left\langle \left.\left(\begin{array}{cc}
0 & D\\
G & 0
\end{array}\right)\left(\begin{array}{c}
\pi_{\mathcal{BD}(G)}^{\ast}\pi_{\mathcal{BD}(G)}u\\
\pi_{\mathcal{BD}(D)}^{\ast}\pi_{\mathcal{BD}(D)}v
\end{array}\right)-A\left(\begin{array}{c}
x\\
y
\end{array}\right)\right|\left(\begin{array}{c}
\pi_{\mathcal{BD}(G)}^{\ast}\pi_{\mathcal{BD}(G)}u\\
\pi_{\mathcal{BD}(D)}^{\ast}\pi_{\mathcal{BD}(D)}v
\end{array}\right)-\left(\begin{array}{c}
x\\
y
\end{array}\right)\right\rangle _{H_{0}\oplus H_{1}}\\
 & =\Re\left\langle \left.\left(\begin{array}{cc}
0 & D\\
G & 0
\end{array}\right)\left(\begin{array}{c}
\pi_{\mathcal{BD}(G)}^{\ast}\pi_{\mathcal{BD}(G)}u-x\\
\pi_{\mathcal{BD}(D)}^{\ast}\pi_{\mathcal{BD}(D)}v-y
\end{array}\right)\right|\left(\begin{array}{c}
\pi_{\mathcal{BD}(G)}^{\ast}\pi_{\mathcal{BD}(G)}u-x\\
\pi_{\mathcal{BD}(D)}^{\ast}\pi_{\mathcal{BD}(D)}v-y
\end{array}\right)\right\rangle _{H_{0}\oplus H_{1}}\\
 & =\Re\langle\pi_{\mathcal{BD}(G)}(\pi_{\mathcal{BD}(G)}^{\ast}\pi_{\mathcal{BD}(G)}u-x)|\stackrel{\bullet}{D}\pi_{\mathcal{BD}(D)}(\pi_{\mathcal{BD}(D)}^{\ast}\pi_{\mathcal{BD}(D)}v-y)\rangle_{\mathcal{BD}(G)}\\
 & =\Re\langle\pi_{\mathcal{BD}(G)}(u-x)|\stackrel{\bullet}{D}\pi_{\mathcal{BD}(D)}(v-y)\rangle_{\mathcal{BD}(G)}\\
 & =\Re\left\langle \left.\left(\begin{array}{cc}
0 & D\\
G & 0
\end{array}\right)\left(\begin{array}{c}
u-x\\
v-y
\end{array}\right)\right|\left(\begin{array}{c}
u-x\\
v-y
\end{array}\right)\right\rangle _{H_{0}\oplus H_{1}}\\
 & =\Re\left\langle \left.\left(\begin{array}{cc}
0 & D\\
G & 0
\end{array}\right)\left(\begin{array}{c}
u\\
v
\end{array}\right)-A\left(\begin{array}{c}
x\\
y
\end{array}\right)\right|\left(\begin{array}{c}
u\\
v
\end{array}\right)-\left(\begin{array}{c}
x\\
y
\end{array}\right)\right\rangle _{H_{0}\oplus H_{1}}.
\end{align*}
If $(u,v)\in\mathcal{D}(A)$ the last term is non-negative and thus
\[
(\pi_{\mathcal{BD}(G)}^{\ast}\pi_{\mathcal{BD}(G)}u,\pi_{\mathcal{BD}(D)}^{\ast}\pi_{\mathcal{BD}(D)}v)\in\mathcal{D}(A),
\]
according to the maximality of $A$. If on the other hand 
\[
(\pi_{\mathcal{BD}(G)}^{\ast}\pi_{\mathcal{BD}(G)}u,\pi_{\mathcal{BD}(D)}^{\ast}\pi_{\mathcal{BD}(D)}v)\in\mathcal{D}(A),
\]
then the first term in the latter equalities is non-negative and hence,
again by the maximality of $A$ we deduce $(u,v)\in\mathcal{D}(A)$.\end{proof}
\begin{prop}
\label{prop:existence_h}Let $A$ be maximal monotone. Then there
exists a relation $h\subseteq\mathcal{BD}(G)\oplus\mathcal{BD}(G)$
such that 
\[
\mathcal{D}(A)=\left\{ (u,v)\in\mathcal{D}(G)\times\mathcal{D}(D)\,\left|\,(\pi_{\mathcal{BD}(G)}u,\stackrel{\bullet}{D}\pi_{\mathcal{BD}(D)}v)\in h\right.\right\} .
\]
\end{prop}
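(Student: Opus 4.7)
The plan is to define the relation $h$ directly from $A$ by collecting the boundary data of its graph, namely
\[
h \coloneqq \left\{ \left.\left(\pi_{\mathcal{BD}(G)}u,\stackrel{\bullet}{D}\pi_{\mathcal{BD}(D)}v\right)\,\right|\,(u,v)\in\mathcal{D}(A)\right\}\subseteq\mathcal{BD}(G)\oplus\mathcal{BD}(G),
\]
and then to check that this $h$ realizes \prettyref{eq:D(A)}. The inclusion ``$\subseteq$'' is automatic from the definition of $h$, so the only content is the reverse inclusion, and the key tools are \prettyref{lem:domain_proj} together with the injectivity of the unitary operator $\stackrel{\bullet}{D}$.

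For the nontrivial direction I would fix $(u,v)\in\mathcal{D}(G)\times\mathcal{D}(D)$ with $\left(\pi_{\mathcal{BD}(G)}u,\stackrel{\bullet}{D}\pi_{\mathcal{BD}(D)}v\right)\in h$. Unpacking the definition of $h$ yields some $(u',v')\in\mathcal{D}(A)$ with $\pi_{\mathcal{BD}(G)}u=\pi_{\mathcal{BD}(G)}u'$ and $\stackrel{\bullet}{D}\pi_{\mathcal{BD}(D)}v=\stackrel{\bullet}{D}\pi_{\mathcal{BD}(D)}v'$. Since $\stackrel{\bullet}{D}$ is unitary, the second equality forces $\pi_{\mathcal{BD}(D)}v=\pi_{\mathcal{BD}(D)}v'$. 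Applying the embeddings $\pi_{\mathcal{BD}(G)}^{\ast}$ and $\pi_{\mathcal{BD}(D)}^{\ast}$ gives
\[
\pi_{\mathcal{BD}(G)}^{\ast}\pi_{\mathcal{BD}(G)}u=\pi_{\mathcal{BD}(G)}^{\ast}\pi_{\mathcal{BD}(G)}u',\qquad \pi_{\mathcal{BD}(D)}^{\ast}\pi_{\mathcal{BD}(D)}v=\pi_{\mathcal{BD}(D)}^{\ast}\pi_{\mathcal{BD}(D)}v'.
\]

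Now I would invoke \prettyref{lem:domain_proj} twice. Since $(u',v')\in\mathcal{D}(A)$, the lemma first yields $\left(\pi_{\mathcal{BD}(G)}^{\ast}\pi_{\mathcal{BD}(G)}u',\pi_{\mathcal{BD}(D)}^{\ast}\pi_{\mathcal{BD}(D)}v'\right)\in\mathcal{D}(A)$, and by the identities just established this pair equals $\left(\pi_{\mathcal{BD}(G)}^{\ast}\pi_{\mathcal{BD}(G)}u,\pi_{\mathcal{BD}(D)}^{\ast}\pi_{\mathcal{BD}(D)}v\right)$. Applying \prettyref{lem:domain_proj} in the opposite direction to the pair $(u,v)$ then gives $(u,v)\in\mathcal{D}(A)$, finishing the argument.

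The only real obstacle is the conceptual one of choosing the right $h$: one has to notice that membership in $\mathcal{D}(A)$ depends solely on the two projected components (this is exactly what \prettyref{lem:domain_proj} provides), and then translate the equivalent parametrization in terms of $\pi_{\mathcal{BD}(G)}u$ and $\stackrel{\bullet}{D}\pi_{\mathcal{BD}(D)}v$ using the injectivity of $\stackrel{\bullet}{D}$. Note that the maximal monotonicity of $h$ is not part of the claim here; it would be addressed separately in the proof of the main theorem.
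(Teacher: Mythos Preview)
Your proof is correct and follows essentially the same approach as the paper: both define $h$ from $\mathcal{D}(A)$ and rely on \prettyref{lem:domain_proj} together with the unitarity of $\stackrel{\bullet}{D}$ (equivalently $\stackrel{\bullet}{G}$) to establish the equality of domains. The only cosmetic difference is that the paper writes $h$ as $\{(x,y)\mid (\pi_{\mathcal{BD}(G)}^{\ast}x,\pi_{\mathcal{BD}(D)}^{\ast}\stackrel{\bullet}{G}y)\in\mathcal{D}(A)\}$, which avoids introducing a witness $(u',v')$ and lets the equivalence be read off in one chain, whereas your image-style definition of $h$ requires the extra step of unpacking such a witness; the two relations coincide and the logical content is the same.
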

\begin{proof}
We define 
\[
h\coloneqq\left\{ (x,y)\in\mathcal{BD}(G)\oplus\mathcal{BD}(G)\,\left|\,(\pi_{\mathcal{BD}(G)}^{\ast}x,\pi_{\mathcal{BD}(D)}^{\ast}\stackrel{\bullet}{G}y)\in\mathcal{D}(A)\right.\right\} .
\]
Let $(u,v)\in\mathcal{D}(G)\times\mathcal{D}(D).$ Then, using \prettyref{lem:domain_proj},
we get that 
\begin{align*}
(\pi_{\mathcal{BD}(G)}u,\stackrel{\bullet}{D}\pi_{\mathcal{BD}(D)}v)\in h & \Leftrightarrow(\pi_{\mathcal{BD}(G)}^{\ast}\pi_{\mathcal{BD}(G)}u,\pi_{\mathcal{BD}(D)}^{\ast}\stackrel{\bullet}{G}\stackrel{\bullet}{D}\pi_{\mathcal{BD}(D)}v)\in\mathcal{D}(A)\\
 & \Leftrightarrow(\pi_{\mathcal{BD}(G)}^{\ast}\pi_{\mathcal{BD}(G)}u,\pi_{\mathcal{BD}(D)}^{\ast}\pi_{\mathcal{BD}(D)}v)\in\mathcal{D}(A)\\
 & \Leftrightarrow(u,v)\in\mathcal{D}(A).\tag*{\qedhere}
\end{align*}
\end{proof}
\begin{prop}
\label{prop:h_max_mon}Let $A$ be maximal monotone and $h\subseteq\mathcal{BD}(G)\oplus\mathcal{BD}(G)$
such that 
\[
\mathcal{D}(A)=\left\{ (u,v)\in\mathcal{D}(G)\times\mathcal{D}(D)\,\left|\,(\pi_{\mathcal{BD}(G)}u,\stackrel{\bullet}{D}\pi_{\mathcal{BD}(D)}v)\in h\right.\right\} .
\]
Then $h$ is maximal monotone.\end{prop}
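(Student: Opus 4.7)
I would prove monotonicity and maximality of $h$ separately; both reduce to the corresponding property of $A$ via \prettyref{lem:inner_prod}, which acts as a dictionary between the inner product coming from the block operator on $H_{0}\oplus H_{1}$ and the inner product of $\mathcal{BD}(G)$. The key construction is the following canonical lift: for $(x,y)\in\mathcal{BD}(G)\oplus\mathcal{BD}(G)$ I set $u\coloneqq\pi_{\mathcal{BD}(G)}^{\ast}x$ and $v\coloneqq\pi_{\mathcal{BD}(D)}^{\ast}\stackrel{\bullet}{G}y$, so that $u\in\mathcal{BD}(G)\subseteq H^{1}(|G|+\i)=\mathcal{D}(G)$ and $v\in\mathcal{BD}(D)\subseteq H^{1}(|D|+\i)=\mathcal{D}(D)$. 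Using that $\pi_{\mathcal{BD}(G)}\pi_{\mathcal{BD}(G)}^{\ast}$ and $\pi_{\mathcal{BD}(D)}\pi_{\mathcal{BD}(D)}^{\ast}$ are the identities on the respective boundary data spaces and that $\stackrel{\bullet}{D}\stackrel{\bullet}{G}=1_{\mathcal{BD}(G)}$, one verifies $(\pi_{\mathcal{BD}(G)}u,\stackrel{\bullet}{D}\pi_{\mathcal{BD}(D)}v)=(x,y)$.

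For monotonicity, I pick $(x_{i},y_{i})\in h$ for $i\in\{1,2\}$ and form the lifts $(u_{i},v_{i})$ as above. The hypothesis on $h$ then reads $(u_{i},v_{i})\in\mathcal{D}(A)$, and applying \prettyref{lem:inner_prod} to the difference $(u_{1}-u_{2},v_{1}-v_{2})\in\mathcal{D}(G)\times\mathcal{D}(D)$ — using the linearity of the block operator and that $A$ is a restriction of it — gives
\[ \Re\langle A(u_{1},v_{1})-A(u_{2},v_{2})\,|\,(u_{1},v_{1})-(u_{2},v_{2})\rangle_{H_{0}\oplus H_{1}}=\Re\langle x_{1}-x_{2}\,|\,y_{1}-y_{2}\rangle_{\mathcal{BD}(G)}. \]
The left-hand side is non-negative by monotonicity of $A$, so $h$ is monotone.

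For maximality I argue directly, without invoking Minty. Let $(x_{0},y_{0})\in\mathcal{BD}(G)\oplus\mathcal{BD}(G)$ satisfy $\Re\langle x_{0}-x\,|\,y_{0}-y\rangle\geq 0$ for every $(x,y)\in h$, and let $(u_{0},v_{0})$ be the corresponding lift. I propose to adjoin the pair $((u_{0},v_{0}),(Dv_{0},Gu_{0}))$ to $A$. For arbitrary $(u',v')\in\mathcal{D}(A)$ the hypothesis on $h$ yields $(x',y')\coloneqq(\pi_{\mathcal{BD}(G)}u',\stackrel{\bullet}{D}\pi_{\mathcal{BD}(D)}v')\in h$, and \prettyref{lem:inner_prod} applied to $(u_{0}-u',v_{0}-v')$ gives
\[ \Re\langle(Dv_{0},Gu_{0})-A(u',v')\,|\,(u_{0}-u',v_{0}-v')\rangle_{H_{0}\oplus H_{1}}=\Re\langle x_{0}-x'\,|\,y_{0}-y'\rangle_{\mathcal{BD}(G)}\geq 0. \]
Thus the enlarged relation $A\cup\{((u_{0},v_{0}),(Dv_{0},Gu_{0}))\}$ is still monotone, and maximal monotonicity of $A$ forces $(u_{0},v_{0})\in\mathcal{D}(A)$; re-invoking the defining property of $h$ yields $(x_{0},y_{0})=(\pi_{\mathcal{BD}(G)}u_{0},\stackrel{\bullet}{D}\pi_{\mathcal{BD}(D)}v_{0})\in h$.

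I do not anticipate a serious obstacle: all computational content is packaged into \prettyref{lem:inner_prod}, and the proof is a nearly mechanical translation of the monotonicity and maximality of $A$ through the canonical lift. The only bookkeeping requiring care is noting that $\pi_{\mathcal{BD}(G)}^{\ast}$ and $\pi_{\mathcal{BD}(D)}^{\ast}$ land in $\mathcal{D}(G)$ and $\mathcal{D}(D)$ (not merely in $H_{0}$ and $H_{1}$), which is exactly what makes the block operator applicable to the lifts and permits the appeal to the maximality of $A$.
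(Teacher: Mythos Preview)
Your monotonicity argument coincides with the paper's: both lift pairs from $h$ to $\mathcal{D}(A)$ via $(x,y)\mapsto(\pi_{\mathcal{BD}(G)}^{\ast}x,\pi_{\mathcal{BD}(D)}^{\ast}\stackrel{\bullet}{G}y)$ and invoke \prettyref{lem:inner_prod}.

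For maximality the approaches genuinely differ. The paper goes through Minty's Theorem: given $f\in\mathcal{BD}(G)$, it uses surjectivity of $1+A$ to solve
\[
\left(\begin{array}{c}u\\v\end{array}\right)+A\left(\begin{array}{c}u\\v\end{array}\right)=\left(\begin{array}{c}\pi_{\mathcal{BD}(G)}^{\ast}f\\ \pi_{\mathcal{BD}(D)}^{\ast}\stackrel{\bullet}{G}f\end{array}\right),
\]
then shows $v-GDv=0$ (so $v\in\mathcal{BD}(D)$) and deduces $\pi_{\mathcal{BD}(G)}u+\stackrel{\bullet}{D}\pi_{\mathcal{BD}(D)}v=f$, proving $1+h$ is onto. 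You instead argue directly from the definition of maximality: a pair $(x_{0},y_{0})$ monotone against $h$ lifts to $(u_{0},v_{0})$ which, again by \prettyref{lem:inner_prod}, is monotone against $A$, whence $(u_{0},v_{0})\in\mathcal{D}(A)$ and $(x_{0},y_{0})\in h$. Your route is shorter and avoids both Minty and the structural computation $v=GDv$; the paper's route is more constructive and exhibits the preimage under $1+h$ explicitly. Both are correct.
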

\begin{proof}
Let $(x,y),(w,z)\in h.$ Then $(\pi_{\mathcal{BD}(G)}^{\ast}x,\pi_{\mathcal{BD}(D)}^{\ast}\stackrel{\bullet}{G}y),(\pi_{\mathcal{BD}(G)}^{\ast}w,\pi_{\mathcal{BD}(D)}^{\ast}\stackrel{\bullet}{G}z)\in\mathcal{D}(A)$.
By \prettyref{lem:inner_prod} we obtain 
\begin{align*}
 & \Re\langle x-w|y-z\rangle_{\mathcal{BD}(G)}\\
 & =\Re\left\langle \pi_{\mathcal{BD}(G)}\left(\pi_{\mathcal{BD}(G)}^{\ast}x-\pi_{\mathcal{BD}(G)}^{\ast}w\right)\left|\stackrel{\bullet}{D}\pi_{\mathcal{BD}(D)}\left(\pi_{\mathcal{BD}(D)}^{\ast}\stackrel{\bullet}{G}y-\pi_{\mathcal{BD}(D)}^{\ast}\stackrel{\bullet}{G}z\right)\right.\right\rangle _{\mathcal{BD}(G)}\\
 & =\Re\left\langle \left.\left(\begin{array}{cc}
0 & D\\
G & 0
\end{array}\right)\left(\begin{array}{c}
\pi_{\mathcal{BD}(G)}^{\ast}x-\pi_{\mathcal{BD}(G)}^{\ast}w\\
\pi_{\mathcal{BD}(D)}^{\ast}\stackrel{\bullet}{G}y-\pi_{\mathcal{BD}(D)}^{\ast}\stackrel{\bullet}{G}z
\end{array}\right)\right|\left(\begin{array}{c}
\pi_{\mathcal{BD}(G)}^{\ast}x-\pi_{\mathcal{BD}(G)}^{\ast}w\\
\pi_{\mathcal{BD}(D)}^{\ast}\stackrel{\bullet}{G}y-\pi_{\mathcal{BD}(D)}^{\ast}\stackrel{\bullet}{G}z
\end{array}\right)\right\rangle _{H_{0}\oplus H_{1}}\\
 & =\Re\left\langle \left.A\left(\begin{array}{c}
\pi_{\mathcal{BD}(G)}^{\ast}x\\
\pi_{\mathcal{BD}(D)}^{\ast}\stackrel{\bullet}{G}y
\end{array}\right)-A\left(\begin{array}{c}
\pi_{\mathcal{BD}(G)}^{\ast}w\\
\pi_{\mathcal{BD}(D)}^{\ast}\stackrel{\bullet}{G}z
\end{array}\right)\right|\left(\begin{array}{c}
\pi_{\mathcal{BD}(G)}^{\ast}x\\
\pi_{\mathcal{BD}(D)}^{\ast}\stackrel{\bullet}{G}y
\end{array}\right)-\left(\begin{array}{c}
\pi_{\mathcal{BD}(G)}^{\ast}w\\
\pi_{\mathcal{BD}(D)}^{\ast}\stackrel{\bullet}{G}z
\end{array}\right)\right\rangle _{H_{0}\oplus H_{1}}\geq0,
\end{align*}
which proves the monotonicity of $h$. For showing the maximal monotonicity
we use Minty's Theorem. Let $f\in\mathcal{BD}(G).$ Then there exists
$(u,v)\in\mathcal{D}(A)$ such that 
\[
\left(\begin{array}{c}
u\\
v
\end{array}\right)+A\left(\begin{array}{c}
u\\
v
\end{array}\right)=\left(\begin{array}{c}
\pi_{\mathcal{BD}(G)}^{\ast}f\\
\pi_{\mathcal{BD}(D)}^{\ast}\stackrel{\bullet}{G}f
\end{array}\right),
\]
i.e. 
\begin{align}
u+Dv & =\pi_{\mathcal{BD}(G)}^{\ast}f,\label{eq:max_mon_h}\\
v+Gu & =\pi_{\mathcal{BD}(D)}^{\ast}\stackrel{\bullet}{G}f.\nonumber 
\end{align}
The latter especially yields that $Dv\in\mathcal{D}(G)$ and $Gu\in\mathcal{D}(D).$
Moreover, we get that 
\begin{align*}
v-GDv & =v-G(\pi_{\mathcal{BD}(G)}^{\ast}f-u)\\
 & =v-\left(\pi_{\mathcal{BD}(D)}^{\ast}\stackrel{\bullet}{G}f-Gu\right)\\
 & =v-v\\
 & =0
\end{align*}
yielding that $v=\pi_{\mathcal{BD}(D)}^{\ast}\pi_{\mathcal{BD}(D)}v.$
Thus, the first equality in \prettyref{eq:max_mon_h} gives 
\[
\pi_{\mathcal{BD}(G)}u+\stackrel{\bullet}{D}\pi_{\mathcal{BD}(D)}v=f.
\]
Moreover, by definition $\left(\pi_{\mathcal{BD}(G)}u,\stackrel{\bullet}{D}\pi_{\mathcal{BD}(D)}v\right)\in h,$
which yields 
\[
(\pi_{\mathcal{BD}(G)}u,f)\in1+h.
\]
Hence, we have found out that $(1+h)[\mathcal{BD}(G)]=\mathcal{BD}(G)$,
which implies the maximal monotonicity of $h$. 
\end{proof}
Together with \prettyref{prop:existence_h}, the latter proposition
shows one implication in \prettyref{thm:char_max_mon}. For the missing
implication we recall the result and the proof of \cite[Theorem 4.1]{Trostorff2012_nonlin_bd}. 
\begin{prop}
Let $h\subseteq\mathcal{BD}(G)\oplus\mathcal{BD}(G)$ be maximal monotone
and 
\[
\mathcal{D}(A)=\left\{ (u,v)\in\mathcal{D}(G)\times\mathcal{D}(D)\,\left|\,(\pi_{\mathcal{BD}(G)}u,\stackrel{\bullet}{D}\pi_{\mathcal{BD}(D)}v)\in h\right.\right\} .
\]
Then $A$ is maximal monotone.\end{prop}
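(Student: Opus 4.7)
My plan is to verify monotonicity of $A$ directly via \prettyref{lem:inner_prod}, and to establish surjectivity of $1+A$ through an explicit decomposition: an ``interior'' piece handled by the skew-selfadjointness of an auxiliary block operator, plus a ``boundary'' piece handled by applying Minty's Theorem (\prettyref{thm:Minty}) to $h$.

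For monotonicity, I would take $(u,v),(x,y)\in\mathcal{D}(A)$. By the defining condition both $(\pi_{\mathcal{BD}(G)}u,\stackrel{\bullet}{D}\pi_{\mathcal{BD}(D)}v)$ and $(\pi_{\mathcal{BD}(G)}x,\stackrel{\bullet}{D}\pi_{\mathcal{BD}(D)}y)$ lie in $h$. Applying \prettyref{lem:inner_prod} to the difference $(u-x,v-y)\in\mathcal{D}(G)\times\mathcal{D}(D)$ rewrites $\Re\langle A(u,v)-A(x,y)\,|\,(u-x,v-y)\rangle_{H_0\oplus H_1}$ as $\Re\langle \pi_{\mathcal{BD}(G)}(u-x)\,|\,\stackrel{\bullet}{D}\pi_{\mathcal{BD}(D)}(v-y)\rangle_{\mathcal{BD}(G)}$, which is nonnegative by monotonicity of $h$.

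For maximal monotonicity, given $(f_0,f_1)\in H_0\oplus H_1$ I would construct $(u,v)\in\mathcal{D}(A)$ satisfying $(1+A)(u,v)=(f_0,f_1)$. The crucial observation is that the restricted block operator $R:=\begin{pmatrix}0&D\\ G_c&0\end{pmatrix}$ on $\mathcal{D}(G_c)\times\mathcal{D}(D)$ is skew-selfadjoint, since $G_c^{\ast}=-D$ and $D^{\ast}=-G_c$. Hence $1+R$ is a bijection onto $H_0\oplus H_1$, yielding a unique $(u_0,v_0)\in\mathcal{D}(G_c)\times\mathcal{D}(D)$ with $u_0+Dv_0=f_0$ and $v_0+G_cu_0=f_1$. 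Set $c_0:=\stackrel{\bullet}{D}\pi_{\mathcal{BD}(D)}v_0\in\mathcal{BD}(G)$ and use \prettyref{thm:Minty} applied to the maximal monotone $h$ to pick $\tilde u\in\mathcal{BD}(G)$ with $(\tilde u,c_0-\tilde u)\in h$. Finally, define $u:=u_0+\tilde u$ and $v:=v_0-G\tilde u$. Since $\tilde u\in\mathcal{BD}(G)=\mathcal{N}(1-DG)$, one has $G\tilde u\in\mathcal{BD}(D)\subseteq\mathcal{D}(D)$ with $DG\tilde u=\tilde u$, so the expected identities $u+Dv=f_0$, $v+Gu=f_1$ (using $Gu_0=G_cu_0$), $\pi_{\mathcal{BD}(G)}u=\tilde u$, and $\stackrel{\bullet}{D}\pi_{\mathcal{BD}(D)}v=c_0-\tilde u$ follow by inspection, placing $(u,v)$ in $\mathcal{D}(A)$ with $(1+A)(u,v)=(f_0,f_1)$; \prettyref{thm:Minty} then gives maximal monotonicity of $A$.

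The only non-routine step, and the heart of the argument, is finding this decomposition: one must recognise that the gap between $A$ and its skew-selfadjoint ``interior'' restriction $R$ lives entirely in the boundary data spaces via the unitaries $\stackrel{\bullet}{G},\stackrel{\bullet}{D}$, so that after solving the bulk problem the nonlinear boundary condition collapses to the scalar equation $\tilde u+h(\tilde u)\ni c_0$ in $\mathcal{BD}(G)$. Once this structural picture is in place, everything else is algebra.
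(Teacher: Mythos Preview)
Your argument is correct and follows the same overall strategy as the paper---prove monotonicity via \prettyref{lem:inner_prod}, then solve an ``interior'' problem and add a boundary correction obtained from $(1+h)^{-1}$---but your execution of the surjectivity step is genuinely different and somewhat cleaner. The paper restricts the data to $(f,g)\in\mathcal{D}(G_c)\times\mathcal{D}(D_c)$, constructs the interior solution explicitly via the resolvents $(1-DG_c)^{-1}$ and $(1-G_cD)^{-1}$, and therefore only obtains that $1+A$ has \emph{dense} range; it must then invoke a separate closedness argument for $A$ (using closedness of $G$, $D$, and $h$) to conclude. You instead observe directly that $R=\begin{pmatrix}0 & D\\ G_c & 0\end{pmatrix}$ is skew-selfadjoint (since $D=-G_c^{\ast}$), so $1+R$ is bijective on all of $H_0\oplus H_1$, and you obtain full surjectivity of $1+A$ without ever needing closedness. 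The paper's route is more explicit about the interior solution, while yours is more economical and avoids the auxiliary closedness step altogether.
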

\begin{proof}
First we prove that $A$ is monotone. For doing so let $(u,v),(x,y)\in\mathcal{D}(A).$
Then by \prettyref{lem:inner_prod} we obtain 
\begin{align*}
 & \Re\left\langle \left.A\left(\begin{array}{c}
u\\
v
\end{array}\right)-A\left(\begin{array}{c}
x\\
y
\end{array}\right)\right|\left(\begin{array}{c}
u\\
v
\end{array}\right)-\left(\begin{array}{c}
x\\
y
\end{array}\right)\right\rangle _{H_{0}\oplus H_{1}}\\
 & =\Re\left\langle \pi_{\mathcal{BD}(G)}u-\pi_{\mathcal{BD}(G)}x\left|\stackrel{\bullet}{D}\pi_{\mathcal{BD}(D)}v-\stackrel{\bullet}{D}\pi_{\mathcal{BD}(D)}y\right.\right\rangle _{\mathcal{BD}(G)}\geq0,
\end{align*}
which shows the monotonicity of $A$. Next, we prove that $A$ is
closed. For that purpose let $\left((u_{n},v_{n})\right)_{n\in\mathbb{N}}$
be a sequence in $\mathcal{D}(A)$ such that $(u_{n},v_{n})\to(u,v)$
as $n\to\infty$ and $\left(A\left(\begin{array}{c}
u_{n}\\
v_{n}
\end{array}\right)\right)_{n\in\mathbb{N}}$ is convergent. By the closedness of $G$ and $D$ we obtain $(u,v)\in\mathcal{D}(G)\times\mathcal{D}(D)$
and $u_{n}\to u$ as well as $v_{n}\to v$ in $H^{1}(|G|+\i)$ and
$H^{1}(|D|+\i),$ respectively. Thus $\pi_{\mathcal{BD}(G)}u_{n}\to\pi_{\mathcal{BD}(G)}u$
and $\stackrel{\bullet}{D}\pi_{\mathcal{BD}(D)}v_{n}\to\stackrel{\bullet}{D}\pi_{\mathcal{BD}(D)}v.$
The closedness of $h$ now yields the assertion. Finally, we prove
the maximality of $A$ by using Minty's Theorem. We note that since
$A$ is monotone and closed, it suffices to prove that $1+A$ has
dense range. So let $\left(f,g\right)\in\mathcal{D}(G_{c})\times\mathcal{D}(D_{c})$
and define%
\footnote{Recall that $-DG_{c}=G_{c}^{\ast}G_{c}$ and $-G_{c}D=D^{\ast}D$
are non-negative selfadjoint operators and hence, $1-DG_{c}$ and
$1-G_{c}D$ are boundedly invertible.%
} 
\begin{align*}
\tilde{u} & \coloneqq(1-DG_{c})^{-1}\left(f-D_{c}g\right)\in\mathcal{D}(DG_{c})\\
\tilde{v} & \coloneqq(1-G_{c}D)^{-1}(g-G_{c}f)\in\mathcal{D}(G_{c}D).
\end{align*}
Then
\begin{align*}
\tilde{u}+D\tilde{v} & =(1-DG_{c})^{-1}\left(f-D_{c}g\right)+D(1-G_{c}D)^{-1}(g-G_{c}f)\\
 & =(1-DG_{c})^{-1}\left(f-D_{c}g+(1-DG_{c})D(1-G_{c}D)^{-1}(g-G_{c}f)\right)\\
 & =(1-DG_{c})^{-1}\left(f-D_{c}g+D(1-G_{c}D)(1-G_{c}D)^{-1}(g-G_{c}f)\right)\\
 & =(1-DG_{c})^{-1}(f-DG_{c}f)\\
 & =f
\end{align*}
and analogously 
\[
\tilde{v}+G\tilde{u}=(1-G_{c}D)^{-1}(g-G_{c}f)+G(1-DG_{c})^{-1}\left(f-D_{c}g\right)=g.
\]
Moreover, we define 
\begin{align*}
u\coloneqq\tilde{u}+\pi_{\mathcal{BD}(G)}^{\ast}(1+h)^{-1}\left(-\stackrel{\bullet}{D}\pi_{\mathcal{BD}(D)}G_{c}\tilde{u}\right) & \in\mathcal{D}(G),\\
v\coloneqq\tilde{v}-\pi_{\mathcal{BD}(D)}^{\ast}\stackrel{\bullet}{G}(1+h)^{-1}\left(-\stackrel{\bullet}{D}\pi_{\mathcal{BD}(D)}G_{c}\tilde{u}\right) & \in\mathcal{D}(D).
\end{align*}
Then clearly 
\begin{align*}
u+Dv & =\tilde{u}+D\tilde{v}=f,\\
v+Gu & =\tilde{v}+G\tilde{u}=g.
\end{align*}
Moreover, since $\tilde{u}\in\mathcal{D}(G_{c})$ we have that
\[
\pi_{\mathcal{BD}(G)}u=(1+h)^{-1}\left(-\stackrel{\bullet}{D}\pi_{\mathcal{BD}(D)}G_{c}\tilde{u}\right).
\]
Using that 
\begin{align*}
\pi_{\mathcal{BD}(D)}G_{c}\tilde{u} & =\pi_{\mathcal{BD}(D)}G_{c}(1-DG_{c})^{-1}\left(f-D_{c}g\right)\\
 & =\pi_{\mathcal{BD}(D)}(1-G_{c}D)^{-1}G_{c}f+\pi_{\mathcal{BD}(D)}g-\pi_{\mathcal{BD}(D)}(1-G_{c}D)^{-1}g\\
 & =\pi_{\mathcal{BD}(D)}(1-G_{c}D)^{-1}\left(G_{c}f-g\right)\\
 & =-\pi_{\mathcal{BD}(D)}\tilde{v}
\end{align*}
we obtain 
\begin{align*}
\stackrel{\bullet}{D}\pi_{\mathcal{BD}(D)}v+\pi_{\mathcal{BD}(G)}u & =\stackrel{\bullet}{D}\pi_{\mathcal{BD}(D)}\tilde{v}-(1+h)^{-1}\left(-\stackrel{\bullet}{D}\pi_{\mathcal{BD}(D)}G_{c}\tilde{u}\right)+(1+h)^{-1}\left(-\stackrel{\bullet}{D}\pi_{\mathcal{BD}(D)}G_{c}\tilde{u}\right)\\
 & =\stackrel{\bullet}{D}\pi_{\mathcal{BD}(D)}\tilde{v}\\
 & =-\stackrel{\bullet}{D}\pi_{\mathcal{BD}(D)}G_{c}\tilde{u}.
\end{align*}
Thus, 
\[
\pi_{\mathcal{BD}(G)}u=(1+h)^{-1}\left(-\stackrel{\bullet}{D}\pi_{\mathcal{BD}(D)}G_{c}\tilde{u}\right)=(1+h)^{-1}\left(\stackrel{\bullet}{D}\pi_{\mathcal{BD}(D)}v+\pi_{\mathcal{BD}(G)}u\right),
\]
which is equivalent to $(\pi_{\mathcal{BD}(G)}u,\stackrel{\bullet}{D}\pi_{\mathcal{BD}(D)}v)\in h,$
which yields $(u,v)\in\mathcal{D}(A).$
\end{proof}
We conclude this section with a characterization of all skew-selfadjoint
realizations of $A$.
\begin{prop}
Let $A$ be linear, where $\mathcal{D}(A)$ is given by \prettyref{eq:D(A)}
for some linear relation $h\subseteq\mathcal{BD}(G)\oplus\mathcal{BD}(G)$.
Then $A$ is densely defined and $A^{\ast}\subseteq-\left(\begin{array}{cc}
0 & D\\
G & 0
\end{array}\right)$ with 
\[
\mathcal{D}(A^{\ast})=\left\{ (x,y)\in\mathcal{D}(G)\times\mathcal{D}(D)\,\left|\,(\pi_{\mathcal{BD}(G)}x,\stackrel{\bullet}{D}\pi_{\mathcal{BD}(D)}y)\in-h^{\ast}\right.\right\} .
\]
\end{prop}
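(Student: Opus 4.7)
The plan is to establish, in order: (i) density of $\mathcal{D}(A)$; (ii) the restriction $A^{\ast}\subseteq-\bigl(\begin{smallmatrix}0 & D\\ G & 0\end{smallmatrix}\bigr)$ by testing against ``vanishing trace'' elements; (iii) a polarized complex version of Lemma~\prettyref{lem:inner_prod}; and (iv) the characterization of $\mathcal{D}(A^{\ast})$ by translating the resulting pairing condition into membership in $-h^{\ast}$.

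For (i), since $h$ is a linear relation, $(0,0)\in h$. For any $(u,v)\in\mathcal{D}(G_{c})\times\mathcal{D}(D_{c})$ one has $\pi_{\mathcal{BD}(G)}u=0$ and $\pi_{\mathcal{BD}(D)}v=0$, so $(\pi_{\mathcal{BD}(G)}u,\stackrel{\bullet}{D}\pi_{\mathcal{BD}(D)}v)=(0,0)\in h$, whence $(u,v)\in\mathcal{D}(A)$; density of $\mathcal{D}(A)$ in $H_{0}\oplus H_{1}$ follows. For (ii), let $(x,y)\in\mathcal{D}(A^{\ast})$ with $A^{\ast}(x,y)=(z,w)$. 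Testing with $(u,0)\in\mathcal{D}(A)$ for $u\in\mathcal{D}(G_{c})$ yields $\langle G_{c}u|y\rangle=\langle u|z\rangle$, so $y\in\mathcal{D}(G_{c}^{\ast})=\mathcal{D}(D)$ and $z=-Dy$; analogously, testing with $(0,v)$ for $v\in\mathcal{D}(D_{c})$ gives $x\in\mathcal{D}(D_{c}^{\ast})=\mathcal{D}(G)$ and $w=-Gx$.

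The heart of the argument is (iii). Define the sesquilinear forms $\beta((u,v),(x,y))\coloneqq\langle Dv|x\rangle+\langle Gu|y\rangle$ and $\sigma((u,v),(x,y))\coloneqq\langle\pi_{\mathcal{BD}(G)}u|\stackrel{\bullet}{D}\pi_{\mathcal{BD}(D)}y\rangle_{\mathcal{BD}(G)}$ on $\mathcal{D}(G)\times\mathcal{D}(D)$. Lemma~\prettyref{lem:inner_prod} says precisely that their quadratic forms have equal real parts, so $\gamma\coloneqq\beta-\sigma$ is a sesquilinear form whose quadratic form is purely imaginary. A standard polarization argument (the Hermitian part of $\gamma$ is a Hermitian form with vanishing quadratic form, hence zero) yields $\gamma(a,b)=-\overline{\gamma(b,a)}$, which rearranges to the complex identity
$$\langle Dv|x\rangle+\langle Gu|y\rangle+\langle u|Dy\rangle+\langle v|Gx\rangle=\langle\pi_{\mathcal{BD}(G)}u|\stackrel{\bullet}{D}\pi_{\mathcal{BD}(D)}y\rangle_{\mathcal{BD}(G)}+\langle\stackrel{\bullet}{D}\pi_{\mathcal{BD}(D)}v|\pi_{\mathcal{BD}(G)}x\rangle_{\mathcal{BD}(G)}$$
valid for all $(u,v),(x,y)\in\mathcal{D}(G)\times\mathcal{D}(D)$.

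For (iv), by (ii) the identification $A^{\ast}(x,y)=(-Dy,-Gx)$ on $(x,y)\in\mathcal{D}(G)\times\mathcal{D}(D)$ is equivalent to the vanishing of the left-hand side of the identity for every $(u,v)\in\mathcal{D}(A)$. The map $(u,v)\mapsto(\pi_{\mathcal{BD}(G)}u,\stackrel{\bullet}{D}\pi_{\mathcal{BD}(D)}v)$ carries $\mathcal{D}(A)$ onto $h$ (given $(a,b)\in h$ take $u=\pi_{\mathcal{BD}(G)}^{\ast}a$ and $v=\pi_{\mathcal{BD}(D)}^{\ast}\stackrel{\bullet}{G}b$, using $\stackrel{\bullet}{D}\stackrel{\bullet}{G}=\mathrm{id}_{\mathcal{BD}(G)}$). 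Hence, setting $p\coloneqq\pi_{\mathcal{BD}(G)}x$ and $q\coloneqq\stackrel{\bullet}{D}\pi_{\mathcal{BD}(D)}y$, the condition becomes $\langle a|q\rangle+\langle b|p\rangle=0$ for all $(a,b)\in h$. Unwinding the definition $h^{\ast}=\{(-b,a)\mid(a,b)\in h\}^{\perp}$ shows this is exactly $(p,q)\in-h^{\ast}$. The main obstacle is step (iii): Lemma~\prettyref{lem:inner_prod} is only a real-part statement, and one must argue carefully that the symmetrization of $\beta-\sigma$ vanishes to obtain the honest complex bilinear identity that underlies the pairing computation.
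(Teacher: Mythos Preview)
Your proof is correct and takes a genuinely different route from the paper in the central computation. Steps (i), (ii) and the surjectivity of the trace map onto $h$ are exactly as in the paper. The difference lies in how the pairing identity between $\mathcal{D}(A)$ and $\mathcal{D}(A^{\ast})$ is established.

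The paper proves both inclusions of $\mathcal{D}(A^{\ast})$ by two separate direct computations: it decomposes $u,v,x,y$ into their $G_{c}/D_{c}$-parts and boundary-data parts, repeatedly uses $D_{c}=-G^{\ast}|_{\mathcal{D}(D_{c})}$, $G_{c}=-D^{\ast}|_{\mathcal{D}(G_{c})}$, and the identities $\pi_{\mathcal{BD}(D)}^{\ast}\pi_{\mathcal{BD}(D)}v=GD\pi_{\mathcal{BD}(D)}^{\ast}\pi_{\mathcal{BD}(D)}v$ to move between the $H_{0},H_{1}$ pairings and the $H^{1}(|G|+\i)$ pairing, arriving at $\langle\stackrel{\bullet}{D}\pi_{\mathcal{BD}(D)}v|\pi_{\mathcal{BD}(G)}x\rangle_{\mathcal{BD}(G)}=\langle\pi_{\mathcal{BD}(G)}u|-\stackrel{\bullet}{D}\pi_{\mathcal{BD}(D)}y\rangle_{\mathcal{BD}(G)}$ and then invoking \prettyref{rem:adjoint}. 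The reverse inclusion is a second, analogous chain of equalities.

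Your approach bypasses these two computations entirely. You observe that \prettyref{lem:inner_prod} asserts equality of the real parts of the quadratic forms of two sesquilinear forms $\beta$ and $\sigma$; since the Hilbert spaces are complex, polarization of the Hermitian part of $\beta-\sigma$ yields the full complex Green-type identity
\[
\langle Dv|x\rangle+\langle Gu|y\rangle+\langle u|Dy\rangle+\langle v|Gx\rangle=\langle\pi_{\mathcal{BD}(G)}u|\stackrel{\bullet}{D}\pi_{\mathcal{BD}(D)}y\rangle_{\mathcal{BD}(G)}+\langle\stackrel{\bullet}{D}\pi_{\mathcal{BD}(D)}v|\pi_{\mathcal{BD}(G)}x\rangle_{\mathcal{BD}(G)},
\]
from which both inclusions drop out symmetrically via the single condition $\langle a|q\rangle+\langle b|p\rangle=0$ for all $(a,b)\in h$. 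This is shorter and conceptually cleaner; it exploits the complex structure (the polarization step genuinely needs $\mathbb{C}$-scalars, which the paper assumes throughout), whereas the paper's explicit calculation is more self-contained and shows concretely where each boundary term comes from.
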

\begin{proof}
Note that due to the linearity of $h$ we have $(0,0)\in h$ and thus,
\[
\left(\begin{array}{cc}
0 & D_{c}\\
G_{c} & 0
\end{array}\right)\subseteq A.
\]
This shows that $A$ is densely defined. Moreover, we deduce that
\[
A^{\ast}\subseteq-\left(\begin{array}{cc}
0 & D\\
G & 0
\end{array}\right).
\]
Let $(x,y)\in\mathcal{D}(A^{\ast}).$ Then, for all $(u,v)\in\mathcal{D}(A)$
we have 
\[
\left\langle \left.A\left(\begin{array}{c}
u\\
v
\end{array}\right)\right|\left(\begin{array}{c}
x\\
y
\end{array}\right)\right\rangle _{H_{0}\oplus H_{1}}=\left\langle \left.\left(\begin{array}{c}
u\\
v
\end{array}\right)\right|\left(\begin{array}{c}
-Dy\\
-Gx
\end{array}\right)\right\rangle _{H_{0}\oplus H_{1}}.
\]
The left hand side of the latter equation gives
\begin{align*}
\langle Dv|x\rangle_{H_{0}}+\langle Gu|y\rangle_{H_{1}} & =\langle D_{c}\pi_{D_{c}}^{\ast}\pi_{D_{c}}v|x\rangle_{H_{0}}+\langle\pi_{\mathcal{BD}(G)}^{\ast}\stackrel{\bullet}{D}\pi_{\mathcal{BD}(D)}v|x\rangle_{H_{0}}\\
 & \quad+\langle G_{c}\pi_{G_{c}}^{\ast}\pi_{G_{c}}u|y\rangle_{H_{1}}+\langle\pi_{\mathcal{BD}(D)}^{\ast}\stackrel{\bullet}{G}\pi_{\mathcal{BD}(G)}u|y\rangle_{H_{1}}\\
 & =-\langle\pi_{D_{c}}^{\ast}\pi_{D_{c}}v|Gx\rangle_{H_{1}}+\langle\pi_{\mathcal{BD}(G)}^{\ast}\stackrel{\bullet}{D}\pi_{\mathcal{BD}(D)}v|x\rangle_{H_{0}}\\
 & \quad-\langle\pi_{G_{c}}^{\ast}\pi_{G_{c}}u|Dy\rangle_{H_{0}}+\langle\pi_{\mathcal{BD}(D)}^{\ast}\stackrel{\bullet}{G}\pi_{\mathcal{BD}(G)}u|y\rangle_{H_{1}}
\end{align*}
On the other hand 
\begin{align*}
\langle u|-Dy\rangle_{H_{0}}+\langle v|-Gx\rangle_{H_{1}} & =-\langle\pi_{G_{c}}^{\ast}\pi_{G_{c}}u|Dy\rangle_{H_{0}}-\langle\pi_{\mathcal{BD}(G)}^{\ast}\pi_{\mathcal{BD}(G)}u|Dy\rangle_{H_{0}}\\
 & \quad-\langle\pi_{D_{c}}^{\ast}\pi_{D_{c}}v|Gx\rangle_{H_{1}}-\langle\pi_{\mathcal{BD}(D)}^{\ast}\pi_{\mathcal{BD}(D)}v|Gx\rangle_{H_{1}}.
\end{align*}
Thus, we end up with 
\begin{align*}
 & \langle\pi_{\mathcal{BD}(G)}^{\ast}\stackrel{\bullet}{D}\pi_{\mathcal{BD}(D)}v|x\rangle_{H_{0}}+\langle\pi_{\mathcal{BD}(D)}^{\ast}\stackrel{\bullet}{G}\pi_{\mathcal{BD}(G)}u|y\rangle_{H_{1}}\\
 & =-\left(\langle\pi_{\mathcal{BD}(D)}^{\ast}\pi_{\mathcal{BD}(D)}v|Gx\rangle_{H_{1}}+\langle\pi_{\mathcal{BD}(G)}^{\ast}\pi_{\mathcal{BD}(G)}u|Dy\rangle_{H_{0}}\right)\\
 & =-\left(\langle G\pi_{\mathcal{BD}(G)}^{\ast}\stackrel{\bullet}{D}\pi_{\mathcal{BD}(D)}v|Gx\rangle_{H_{1}}+\langle D\pi_{\mathcal{BD}(D)}^{\ast}\stackrel{\bullet}{G}\pi_{\mathcal{BD}(G)}u|Dy\rangle_{H_{0}}\right),
\end{align*}
which yields 
\[
\langle\pi_{\mathcal{BD}(G)}^{\ast}\stackrel{\bullet}{D}\pi_{\mathcal{BD}(D)}v|x\rangle_{H^{1}(|G|+\i)}=-\langle\pi_{\mathcal{BD}(D)}^{\ast}\stackrel{\bullet}{G}\pi_{\mathcal{BD}(G)}u|y\rangle_{H^{1}(|D|+\i)}.
\]
Hence, 
\[
\langle\stackrel{\bullet}{D}\pi_{\mathcal{BD}(D)}v|\pi_{\mathcal{BD}(G)}x\rangle_{\mathcal{BD}(G)}=\langle\pi_{\mathcal{BD}(G)}u|-\stackrel{\bullet}{D}\pi_{\mathcal{BD}(D)}y\rangle_{\mathcal{BD}(G)},
\]
which yields, using \prettyref{rem:adjoint}, 
\[
(\pi_{\mathcal{BD}(G)}x,-\stackrel{\bullet}{D}\pi_{\mathcal{BD}(D)}y)\in h^{\ast}.
\]
Assume now that $(x,y)\in\mathcal{D}(G)\times\mathcal{D}(D)$ with
$(\pi_{\mathcal{BD}(G)}x,-\stackrel{\bullet}{D}\pi_{\mathcal{BD}(D)}y)\in h^{\ast}$.
Then, for each $(u,v)\in\mathcal{D}(A)$ we compute 
\begin{align*}
\left\langle \left.A\left(\begin{array}{c}
u\\
v
\end{array}\right)\right|\left(\begin{array}{c}
x\\
y
\end{array}\right)\right\rangle _{H_{0}\oplus H_{1}} & =\langle Dv|x\rangle_{H_{0}}+\langle Gu|y\rangle_{H_{1}}\\
 & =-\langle\pi_{D_{c}}^{\ast}\pi_{D_{c}}v|Gx\rangle_{H_{1}}+\langle\pi_{\mathcal{BD}(G)}^{\ast}\stackrel{\bullet}{D}\pi_{\mathcal{BD}(D)}v|x\rangle_{H_{0}}\\
 & \quad-\langle\pi_{G_{c}}^{\ast}\pi_{G_{c}}u|Dy\rangle_{H_{0}}+\langle\pi_{\mathcal{BD}(D)}^{\ast}\stackrel{\bullet}{G}\pi_{\mathcal{BD}(G)}u|y\rangle_{H_{1}}\\
 & =-\langle v|Gx\rangle_{H_{1}}+\langle\pi_{\mathcal{BD}(D)}^{\ast}\pi_{\mathcal{BD}(D)}v|Gx\rangle_{H_{1}}+\langle\pi_{\mathcal{BD}(G)}^{\ast}\stackrel{\bullet}{D}\pi_{\mathcal{BD}(D)}v|x\rangle_{H_{0}}\\
 & \quad-\langle u|Dy\rangle_{H_{0}}+\langle\pi_{\mathcal{BD}(G)}^{\ast}\pi_{\mathcal{BD}(G)}u|Dy\rangle_{H_{0}}+\langle\pi_{\mathcal{BD}(D)}^{\ast}\stackrel{\bullet}{G}\pi_{\mathcal{BD}(G)}u|y\rangle_{H_{1}}\\
 & =-\langle v|Gx\rangle_{H_{1}}+\langle\stackrel{\bullet}{D}\pi_{\mathcal{BD}(D)}v|\pi_{\mathcal{BD}(G)}x\rangle_{\mathcal{BD}(G)}\\
 & \quad-\langle u|Dy\rangle_{H_{0}}-\langle\pi_{\mathcal{BD}(G)}u|-\stackrel{\bullet}{D}\pi_{\mathcal{BD}(D)}y\rangle_{\mathcal{BD}(G)}\\
 & =\left\langle \left.\left(\begin{array}{c}
u\\
v
\end{array}\right)\right|-\left(\begin{array}{cc}
0 & D\\
G & 0
\end{array}\right)\left(\begin{array}{c}
x\\
y
\end{array}\right)\right\rangle _{H_{0}\oplus H_{1}},
\end{align*}
where we again have used \prettyref{rem:adjoint}. This completes
the proof.
\end{proof}
As a consequence of our considerations above, we obtain the following
corollary.
\begin{cor}
\label{cor:skew-selfadjoint}The operator $A$ is skew-selfadjoint
if and only if there exists a skew-selfadjoint relation $h\subseteq\mathcal{BD}(G)\oplus\mathcal{BD}(G)$
such that the domain of $A$ is given by \prettyref{eq:D(A)}.
\end{cor}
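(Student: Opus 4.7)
The plan is to combine Theorem \ref{thm:char_max_mon} with the preceding proposition identifying $\mathcal{D}(A^{\ast})$. The starting observation is that, since $A$ and $-A^{\ast}$ are both restrictions of $\bigl(\begin{smallmatrix}0 & D\\ G & 0\end{smallmatrix}\bigr)$, the identity $A = -A^{\ast}$ is equivalent to the mere equality of domains $\mathcal{D}(A) = \mathcal{D}(A^{\ast})$.

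For the ``if'' direction, I would start from a skew-selfadjoint $h \subseteq \mathcal{BD}(G)\oplus\mathcal{BD}(G)$. Such an $h$ is automatically a closed linear relation, and a standard argument shows it is maximal monotone: monotonicity follows from $\Re\langle x|y\rangle = 0$ for $(x,y)\in h$ (since $h = -h^{\ast}$ forces $\langle x|y\rangle$ to be purely imaginary on $h$), and surjectivity of $1+h$ follows from $\mathcal{N}(1+h^{\ast}) = \mathcal{N}(1-h) = \{0\}$ combined with the closedness of $\mathcal{R}(1+h)$ induced by monotonicity. Theorem \ref{thm:char_max_mon} then yields that $A$ is maximal monotone; in particular $A$ is closed and linear, so the previous proposition applies and describes $\mathcal{D}(A^{\ast})$ in terms of $-h^{\ast} = h$, giving $\mathcal{D}(A^{\ast}) = \mathcal{D}(A)$ and hence $A = -A^{\ast}$.

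For the converse, I would first observe that skew-selfadjointness of $A$ implies maximal monotonicity by the same reasoning applied to $A$ instead of $h$. Theorem \ref{thm:char_max_mon} then supplies a maximal monotone $h$ satisfying (\ref{eq:D(A)}), and the explicit formula in Proposition \ref{prop:existence_h} makes $h$ linear because $\mathcal{D}(A)$ is a linear subspace. Equating the description of $\mathcal{D}(A)$ via $h$ with the description of $\mathcal{D}(A^{\ast})$ via $-h^{\ast}$ (from the previous proposition) then produces, for every $(u,v)\in\mathcal{D}(G)\times\mathcal{D}(D)$, the equivalence
\[
(\pi_{\mathcal{BD}(G)}u, \stackrel{\bullet}{D}\pi_{\mathcal{BD}(D)}v) \in h \quad\Longleftrightarrow\quad (\pi_{\mathcal{BD}(G)}u, \stackrel{\bullet}{D}\pi_{\mathcal{BD}(D)}v) \in -h^{\ast}.
\]

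The remaining step -- and the only mildly delicate one -- is to upgrade this pointwise equivalence to the relational identity $h = -h^{\ast}$ on $\mathcal{BD}(G)\oplus\mathcal{BD}(G)$. This requires the evaluation map $(u,v)\mapsto (\pi_{\mathcal{BD}(G)}u, \stackrel{\bullet}{D}\pi_{\mathcal{BD}(D)}v)$ to be surjective from $\mathcal{D}(G)\times\mathcal{D}(D)$ onto $\mathcal{BD}(G)\oplus\mathcal{BD}(G)$, which I expect to be the main obstacle to verify cleanly; it follows, however, from $\pi_{V}\pi_{V}^{\ast} = \mathrm{id}_{V}$ on any closed subspace together with the unitarity of $\stackrel{\bullet}{D}\colon \mathcal{BD}(D)\to\mathcal{BD}(G)$ established at the end of Subsection 2.2. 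With surjectivity in hand, $h = -h^{\ast}$ and thus the skew-selfadjointness of $h$ is immediate.
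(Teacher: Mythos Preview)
Your proposal is correct and follows precisely the route the paper intends: the corollary is stated without proof as an immediate consequence of Theorem~\ref{thm:char_max_mon} together with the preceding proposition computing $\mathcal{D}(A^{\ast})$, and you have supplied exactly those details, including the surjectivity of the boundary map needed to pass from the domain equality to $h=-h^{\ast}$. One minor remark: in the ``if'' direction the appeal to Theorem~\ref{thm:char_max_mon} is not needed to justify that $A$ is linear (linearity of $\mathcal{D}(A)$ follows directly from linearity of $h$ via \prettyref{eq:D(A)}), so the proposition on $A^{\ast}$ applies immediately once $h$ is skew-selfadjoint.
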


\section{Classical trace spaces}

In this section we compare the classical trace spaces $H^{\pm\frac{1}{2}}(\partial\Omega)$
with the abstract boundary data spaces $\mathcal{BD}(\grad)$ and
$\mathcal{BD}(\dive),$ where $\grad$ and $\dive$ are defined as
in Section 2. Throughout this section we assume that $\Omega\subseteq\mathbb{R}^{n}$
is a bounded Lipschitz domain.\\
We recall the definition of the classical trace spaces $H^{\frac{1}{2}}(\partial\Omega)$
and $H^{-\frac{1}{2}}(\partial\Omega)$.
\begin{prop}[{\cite[Theorem 1.2]{necas2011direct}}]
 The operator 
\begin{align*}
\gamma_{D}:C^{\infty}(\overline{\Omega})\subseteq H^{1}(|\grad|+\i) & \to L_{2}(\partial\Omega)\\
u & \mapsto u|_{\partial\Omega}
\end{align*}
is bounded and thus, it has a unique bounded extension to $H^{1}(|\grad|+\i)$,
which will be again denoted by $\gamma_{D}$.\end{prop}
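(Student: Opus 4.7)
The plan is to reduce the statement to the classical trace theorem of \v{N}ecas. First I would identify the Hilbert space $H^{1}(|\grad|+\i)$ with the usual Sobolev space $H^{1}(\Omega)$ equipped with its standard norm. Indeed, for $u\in\mathcal{D}(\grad)=H^{1}(\Omega)$ and using that $|\grad|$ is selfadjoint with $\mathcal{D}(|\grad|)=\mathcal{D}(\grad)$, one has
\[
\|u\|_{H^{1}(|\grad|+\i)}^{2}=\bigl|(|\grad|+\i)u\bigr|_{L_{2}(\Omega)}^{2}=\bigl\langle(|\grad|^{2}+1)u\bigm|u\bigr\rangle_{L_{2}(\Omega)}=|\grad u|_{L_{2}(\Omega)^{n}}^{2}+|u|_{L_{2}(\Omega)}^{2}.
\]
Hence the norm on $H^{1}(|\grad|+\i)$ agrees with the ordinary $H^{1}$-norm, and the assertion becomes the standard statement that the pointwise boundary restriction, defined a priori on $C^{\infty}(\overline{\Omega})$, is bounded with respect to the $H^{1}$-norm.

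Next I would sketch the classical localization argument producing this bound. Using the Lipschitz property of $\partial\Omega$, one covers $\partial\Omega$ by finitely many open sets $U_{j}\subseteq\mathbb{R}^{n}$ on each of which $\partial\Omega\cap U_{j}$ is, after a bi-Lipschitz change of coordinates $\Phi_{j}$, the graph of a Lipschitz function; one chooses a subordinated smooth partition of unity $(\chi_{j})_{j}$ on a neighbourhood of $\partial\Omega$. Writing $u=\sum_{j}\chi_{j}u+\chi_{0}u$ with $\chi_{0}$ supported in the interior, each piece $\chi_{j}u\circ\Phi_{j}^{-1}$ is compactly supported and it suffices to show in the half-space model that the trace at $\{x_{n}=0\}$ is controlled in $L_{2}$ by the $H^{1}$-norm. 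In that model the estimate follows from the fundamental theorem of calculus along the $n$-th variable, integration in the tangential directions and Cauchy--Schwarz. Chaining these local estimates and using that the diffeomorphisms $\Phi_{j}$ preserve $H^{1}$-norms up to bi-Lipschitz constants yields a constant $C>0$ with $|\gamma_{D}u|_{L_{2}(\partial\Omega)}\leq C\|u\|_{H^{1}(\Omega)}$ for every $u\in C^{\infty}(\overline{\Omega})$.

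Finally, the unique bounded extension follows from standard density: by the Meyers--Serrin type approximation for Lipschitz domains, $C^{\infty}(\overline{\Omega})$ is dense in $H^{1}(\Omega)=H^{1}(|\grad|+\i)$, so $\gamma_{D}$ extends by continuity to a bounded linear operator $H^{1}(|\grad|+\i)\to L_{2}(\partial\Omega)$, and the extension is unique because the target space is Hausdorff.

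The genuinely substantial step is the local bound in the half-space model together with the Lipschitz change of coordinates; all other steps are essentially bookkeeping. Since the author explicitly cites \cite[Theorem 1.2]{necas2011direct}, in the actual write-up I would not reproduce this technical trace inequality but merely point out the identification of the norms and invoke the reference, the only new content being the computation that $H^{1}(|\grad|+\i)$ and the standard $H^{1}(\Omega)$ carry the same topology.
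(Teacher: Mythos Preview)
Your proposal is correct and matches the paper's own treatment: the proposition is stated with a bare citation to \cite[Theorem 1.2]{necas2011direct} and no proof is given there at all. Your additional observation that the $H^{1}(|\grad|+\i)$-norm coincides with the standard $H^{1}(\Omega)$-norm is the only bridge needed between the abstract notation and the cited classical trace theorem, and your computation of that norm is accurate; the paper leaves this identification implicit.
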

\begin{defn*}
We set $H^{\frac{1}{2}}(\partial\Omega)\coloneqq\gamma_{D}[H^{1}(|\grad|+\i)]$
and equip this space with the norm
\[
|u|_{H^{\frac{1}{2}}(\partial\Omega)}\coloneqq\sqrt{|u|_{L_{2}(\partial\Omega)}^{2}+\intop_{\partial\Omega}\intop_{\partial\Omega}\frac{|u(x)-u(y)|^{2}}{\left|x-y\right|^{n}}\mbox{ d}y\mbox{ d}x}.
\]
Moreover, we set $H^{-\frac{1}{2}}(\partial\Omega)\coloneqq H^{\frac{1}{2}}(\partial\Omega)^{\ast},$
the dual space of $H^{\frac{1}{2}}(\partial\Omega).$\end{defn*}
\begin{rem}
\label{rem:embeddings}One can show that 
\begin{align*}
\iota:H^{\frac{1}{2}}(\partial\Omega) & \to L_{2}(\partial\Omega)\\
u & \mapsto u
\end{align*}
is bounded and has dense range, see \cite[Theorem 4.9]{necas2011direct}
(indeed, one can even show the compactness of this embedding, \cite[Theorem 6.2]{necas2011direct}).
Consequently, one obtains that 
\begin{align*}
\iota':L_{2}(\partial\Omega) & \to H^{-\frac{1}{2}}(\partial\Omega)\\
f & \mapsto\left(H^{\frac{1}{2}}(\partial\Omega)\ni u\mapsto\langle f|\iota u\rangle_{L_{2}(\partial\Omega)}\right)
\end{align*}
is also bounded with dense range (and even compact).\end{rem}
\begin{prop}[{\cite[Theorems 4.10, 5.5, 5.7]{necas2011direct}}]
\label{prop:properties_traces} The operator $\gamma_{D}:H^{1}(|\grad|+\i)\to H^{\frac{1}{2}}(\partial\Omega)$
is bounded and $\mathcal{N}(\gamma_{D})=H^{1}(|\grad_{c}|+\i)$. Moreover,
there exists a bounded right inverse, i.e. there exists a bounded
linear operator 
\[
E:H^{\frac{1}{2}}(\partial\Omega)\to H^{1}(|\grad|+\i)
\]
such that $\gamma_{D}\circ E=1_{H^{\frac{1}{2}}(\partial\Omega)},$
the identity on $H^{\frac{1}{2}}(\partial\Omega)$. 
\end{prop}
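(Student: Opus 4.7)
The plan is to prove the three assertions separately: boundedness of $\gamma_D$ into $H^{\frac{1}{2}}(\partial\Omega)$, the characterization $\mathcal{N}(\gamma_D)=H^{1}(|\grad_{c}|+\i)$, and the existence of a bounded right inverse. All three reduce, via a finite partition of unity subordinate to a cover of $\overline{\Omega}$ by balls in which $\partial\Omega$ is a Lipschitz graph, to statements on a half-space, where results for smooth functions can be used and then extended by density (using Meyers--Serrin to approximate $H^{1}(|\grad|+\i)$ elements by $C^{\infty}(\overline{\Omega})$ functions).

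For boundedness, I would start from the already stated $L_2(\partial\Omega)$-bound of $\gamma_D$ on $C^{\infty}(\overline{\Omega})$ and then establish the additional Slobodeckij estimate
\[
\int_{\partial\Omega}\int_{\partial\Omega}\frac{|u(x)-u(y)|^{2}}{|x-y|^{n}}\dd y\dd x\le C\,\|u\|_{H^{1}(|\grad|+\i)}^{2}
\]
for $u\in C^{\infty}(\overline{\Omega})$. In local Lipschitz coordinates this follows from the analogous half-space estimate, which itself is a direct computation using the fundamental theorem of calculus along vertical segments together with the Hardy--Littlewood--type bound on the fractional seminorm in $H^{1/2}(\R^{n-1})$. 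Density of $C^{\infty}(\overline{\Omega})$ then promotes the bound to all of $H^{1}(|\grad|+\i)$.

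For the kernel, the inclusion $H^{1}(|\grad_{c}|+\i)\subseteq\mathcal{N}(\gamma_{D})$ is immediate from continuity of $\gamma_D$ together with $\gamma_D|_{C_c^\infty(\Omega)}=0$. The converse is the main obstacle. I would take $u\in H^{1}(|\grad|+\i)$ with $\gamma_{D}u=0$, localize via a partition of unity, and in each boundary chart straighten $\partial\Omega$ to a hyperplane. The localized piece is then translated a small distance into the interior and mollified, producing a $C_c^{\infty}(\Omega)$-approximation. To show this converges back to $u$ in $H^{1}(|\grad|+\i)$ one needs a Hardy-type inequality near $\partial\Omega$, which controls $\int_\Omega |u(x)|^{2}\dist(x,\partial\Omega)^{-2}\dd x$ by $\|u\|_{H^{1}}^{2}$ when the trace vanishes; this is exactly the ingredient that makes the translation cost negligible.

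For the bounded right inverse, once one is on a half-space one can define $E$ via a standard Fourier-based formula such as $(Ef)(x',x_{n})=\mathcal{F}^{-1}_{\xi'}\bigl(e^{-x_{n}(1+|\xi'|^{2})^{1/2}}\hat{f}(\xi')\bigr)$, which is a bounded right inverse $H^{1/2}(\R^{n-1})\to H^{1}(\R^{n}_{+})$. Gluing these local extensions with the partition of unity and transporting back through the Lipschitz charts yields $E:H^{\frac{1}{2}}(\partial\Omega)\to H^{1}(|\grad|+\i)$ with $\gamma_{D}\circ E=1$. The delicate step here is ensuring that the composition with Lipschitz chart maps preserves $H^{1}$-regularity, which is where the Lipschitz (rather than merely continuous) hypothesis on $\partial\Omega$ enters; after that, the right-inverse property is preserved trivially by the localization.
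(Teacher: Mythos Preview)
The paper does not supply its own proof of this proposition; it is stated purely as a citation of Theorems~4.10, 5.5, and 5.7 from Ne\v{c}as's monograph, so there is no in-paper argument to compare your proposal against. Your outline is essentially the standard route one finds in Ne\v{c}as and comparable references: localization via a Lipschitz partition of unity, flattening to a half-space, and treating each of the three claims there, with density of $C^{\infty}(\overline{\Omega})$ used to pass from smooth to general $H^{1}$ functions.

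One remark on the kernel argument. Invoking a Hardy-type inequality is a legitimate way to proceed (cut off near the boundary and control the bad term $(\nabla\chi_{\epsilon})u$ via $\int |u|^{2}/\dist(\cdot,\partial\Omega)^{2}$), but it is not actually needed, and your phrase ``translate into the interior and mollify'' already points to the cleaner argument. Once $\gamma_{D}u=0$ on the half-space, a direct integration by parts shows that the zero-extension $\tilde u$ of $u$ to all of $\mathbb{R}^{n}$ lies in $H^{1}(\mathbb{R}^{n})$; translating $\tilde u$ normally so its support sits in $\{x_{n}>\epsilon\}$ and then mollifying produces the $C_{c}^{\infty}$ approximants, with convergence in $H^{1}$ coming simply from continuity of translation in $L^{2}$ applied to $\tilde u$ and $\nabla\tilde u$. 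So Hardy is superfluous here rather than wrong; if you keep it, be aware that the version you quote (with $\dist(\cdot,\partial\Omega)^{-2}$) itself presupposes the vanishing-trace characterization you are proving, so you would need an independent proof of Hardy on the half-space first.
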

With the help of the last proposition we can show that $\mathcal{BD}(\grad)$
and $H^{\frac{1}{2}}(\partial\Omega)$ are isomorphic.
\begin{cor}
\label{cor:BD(grad)}The operator $\gamma\coloneqq\gamma_{D}\circ\pi_{\mathcal{BD}(\grad)}^{\ast}:\mathcal{BD}(\grad)\to H^{\frac{1}{2}}(\partial\Omega)$
is a Banach space isomorphism.\end{cor}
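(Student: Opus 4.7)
The plan is to verify that $\gamma$ is a bounded linear bijection between Banach spaces and then invoke the bounded inverse theorem. Boundedness is essentially free: $\pi_{\mathcal{BD}(\grad)}^{\ast}$ is the canonical embedding of the closed subspace $\mathcal{BD}(\grad)$ into $H^{1}(|\grad|+\i)$, and $\gamma_{D}:H^{1}(|\grad|+\i)\to H^{\frac{1}{2}}(\partial\Omega)$ is bounded by the preceding proposition, so the composition is bounded.

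For injectivity I would argue as follows. Suppose $u\in\mathcal{BD}(\grad)$ satisfies $\gamma(u)=0$, i.e.\ $\gamma_{D}\pi_{\mathcal{BD}(\grad)}^{\ast}u=0$. By \prettyref{prop:properties_traces} we have $\mathcal{N}(\gamma_{D})=H^{1}(|\grad_{c}|+\i)$, so $\pi_{\mathcal{BD}(\grad)}^{\ast}u\in H^{1}(|\grad_{c}|+\i)$. But by the very definition of the boundary data space we also have $\pi_{\mathcal{BD}(\grad)}^{\ast}u\in\mathcal{BD}(\grad)=H^{1}(|\grad_{c}|+\i)^{\bot_{H^{1}(|\grad|+\i)}}$, and these two subspaces intersect trivially. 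Hence $\pi_{\mathcal{BD}(\grad)}^{\ast}u=0$, and applying $\pi_{\mathcal{BD}(\grad)}$ (which is the left inverse of $\pi_{\mathcal{BD}(\grad)}^{\ast}$) gives $u=0$.

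For surjectivity I would use the bounded right inverse $E:H^{\frac{1}{2}}(\partial\Omega)\to H^{1}(|\grad|+\i)$ supplied by \prettyref{prop:properties_traces}. Given $f\in H^{\frac{1}{2}}(\partial\Omega)$, decompose $Ef=\pi_{G_{c}}^{\ast}\pi_{G_{c}}Ef+\pi_{\mathcal{BD}(\grad)}^{\ast}\pi_{\mathcal{BD}(\grad)}Ef$ according to the orthogonal splitting $H^{1}(|\grad|+\i)=H^{1}(|\grad_{c}|+\i)\oplus\mathcal{BD}(\grad)$. Since $\pi_{G_{c}}^{\ast}\pi_{G_{c}}Ef\in H^{1}(|\grad_{c}|+\i)=\mathcal{N}(\gamma_{D})$, applying $\gamma_{D}$ yields
\[
f=\gamma_{D}Ef=\gamma_{D}\pi_{\mathcal{BD}(\grad)}^{\ast}\pi_{\mathcal{BD}(\grad)}Ef=\gamma\bigl(\pi_{\mathcal{BD}(\grad)}Ef\bigr),
\]
so $\gamma$ is onto.

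Thus $\gamma$ is a bounded linear bijection from the Hilbert space $\mathcal{BD}(\grad)$ onto the Banach space $H^{\frac{1}{2}}(\partial\Omega)$, and the bounded inverse theorem (open mapping theorem) provides continuity of $\gamma^{-1}$, finishing the proof. The only nontrivial input is \prettyref{prop:properties_traces} (the kernel description of $\gamma_{D}$ and the existence of a bounded extension $E$); everything else is bookkeeping with the canonical projections, and I expect no real obstacle.
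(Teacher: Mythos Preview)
Your proof is correct and follows essentially the same route as the paper: injectivity from $\mathcal{N}(\gamma_D)=H^{1}(|\grad_c|+\i)$ together with the orthogonal decomposition, and surjectivity by projecting $Ef$ onto $\mathcal{BD}(\grad)$. The only cosmetic difference is that the paper establishes continuity of $\gamma^{-1}$ by the direct estimate $|\pi_{\mathcal{BD}(\grad)}Ef|_{\mathcal{BD}(\grad)}\leq\|E\|\,|f|_{H^{1/2}(\partial\Omega)}$, whereas you invoke the open mapping theorem; both are fine.
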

\begin{proof}
That $\gamma$ is one-to-one and bounded follows from \prettyref{prop:properties_traces}.
To see that $\gamma$ is onto, let $u\in H^{\frac{1}{2}}(\partial\Omega)$.
Then we set $v\coloneqq\pi_{\mathcal{BD}(\grad)}Eu$ and obtain
\begin{align*}
u & =\gamma_{D}(Eu)\\
 & =\gamma_{D}(\pi_{\grad_{c}}^{\ast}\pi_{\grad_{c}}Eu+\pi_{\mathcal{BD}(\grad)}^{\ast}\pi_{\mathcal{BD}(\grad)}Eu)\\
 & =\gamma_{D}(\pi_{\mathcal{BD}(\grad)}^{\ast}\pi_{\mathcal{BD}(\grad)}Eu)=\gamma(v).
\end{align*}
Moreover, 
\[
|v|_{\mathcal{BD}(\grad)}=|\pi_{\mathcal{BD}(\grad)}^{\ast}v|_{H^{1}(|\grad|+\i)}\leq\|E\||u|_{H^{\frac{1}{2}}(\partial\Omega)},
\]
which shows the continuity of $\gamma^{-1}.$
\end{proof}
Using this observation, we may define an alternative, but equivalent,
norm on $H^{\frac{1}{2}}(\partial\Omega)$ by 
\[
|u|\coloneqq|\gamma^{-1}u|_{\mathcal{BD}(\grad)}\quad(u\in H^{\frac{1}{2}}(\partial\Omega)).
\]
Using this norm, the operator $\gamma:\mathcal{BD}(\grad)\to H^{\frac{1}{2}}(\partial\Omega)$
becomes unitary. In the subsequent part we will always assume that
$H^{\frac{1}{2}}(\partial\Omega)$ is equipped with this equivalent
norm. \\
In order to deal with normal derivatives, we need the following representation
of the dual of $\mathcal{BD}(\grad).$
\begin{lem}
\label{lem:TR(div)}Let%
\footnote{Recall that according to \prettyref{prop:modulus} the operator $\dive_{c}$
is bounded as an operator from $L_{2}(\Omega)^{n}$ to $H^{-1}(|\dive_{c}^{\ast}|+\i)=H^{-1}(|\grad|+\i)$.%
} $\mathcal{TR}(\dive)\coloneqq\left(\dive-\dive_{c}\right)\left[H^{1}(|\dive|+\i)\right]\subseteq H^{-1}(|\grad|+\i).$
Then, the mapping $\Phi:\mathcal{TR}(\dive)\to\mathcal{BD}(\grad)'$
given by 
\[
\Phi\left(\left(\dive-\dive_{c}\right)v\right)(u)\coloneqq\left\langle \left.\left(\dive-\dive_{c}\right)v\right|\pi_{\mathcal{BD}(\grad)}^{\ast}u\right\rangle _{H^{-1}(|\grad|+\i),H^{1}(|\grad|+\i)}
\]
is unitary.\end{lem}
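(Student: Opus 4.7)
I want to show $\Phi$ is a well-defined, linear, isometric bijection; linearity is automatic from the formula, and since any linear isometry is injective, the non-trivial parts are well-definedness into $\mathcal{BD}(\grad)'$, norm preservation, and surjectivity. Polarisation then upgrades the isometry to preservation of the inner product, which is exactly unitarity.

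The key structural input is that $(\dive-\dive_{c})v$ annihilates $H^{1}(|\grad_{c}|+\i)$ for every $v\in H^{1}(|\dive|+\i)$. For $\phi\in\mathcal{D}(\grad_{c})$ both $\langle\dive v|\phi\rangle_{L_{2}}$ and the extended $\langle\dive_{c}v|\phi\rangle_{H^{-1},H^{1}}$ equal $-\langle v|\grad\phi\rangle_{L_{2}}$; the first by the defining adjoint relation $\dive=-\grad_{c}^{\ast}$, the second by density of $\mathcal{D}(\dive_{c})$ in $L_{2}(\Omega)^{n}$ and joint continuity in both factors. Combined with the orthogonal decomposition $H^{1}(|\grad|+\i)=H^{1}(|\grad_{c}|+\i)\oplus\mathcal{BD}(\grad)$ this yields that the duality pairing of $(\dive-\dive_{c})v$ with any $w\in H^{1}(|\grad|+\i)$ depends only on $\pi_{\mathcal{BD}(\grad)}w$. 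Consequently the functional norm of the restriction to $\pi_{\mathcal{BD}(\grad)}^{\ast}[\mathcal{BD}(\grad)]$ coincides with the full dual norm on $H^{1}(|\grad|+\i)$, i.e.\ with $|(\dive-\dive_{c})v|_{H^{-1}(|\grad|+\i)}$. This delivers well-definedness and isometry in one stroke.

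Surjectivity is the most delicate step. Given $f\in\mathcal{BD}(\grad)'$, Riesz' theorem yields $h\in\mathcal{BD}(\grad)$ with $f(u)=\langle h|u\rangle_{\mathcal{BD}(\grad)}$. The natural candidate is $v\coloneqq\pi_{\mathcal{BD}(\dive)}^{\ast}\stackrel{\bullet}{\grad}h\in\mathcal{D}(\dive)$: using $\dive[\mathcal{BD}(\dive)]\subseteq\mathcal{BD}(\grad)$ together with $(\stackrel{\bullet}{\grad})^{\ast}=\stackrel{\bullet}{\dive}$ one gets $\dive v=\pi_{\mathcal{BD}(\grad)}^{\ast}h$, and the duality identity for $\dive_{c}$ above together with the elementary formula
$\langle a|b\rangle_{H^{1}(|\grad|+\i)}=\langle a|b\rangle_{L_{2}}+\langle\grad a|\grad b\rangle_{L_{2}}$
(a consequence of the self-adjointness of $|\grad|$) yield
\begin{align*}
\Phi((\dive-\dive_{c})v)(u) &= \langle\dive v|\pi_{\mathcal{BD}(\grad)}^{\ast}u\rangle_{L_{2}}+\langle v|\grad\pi_{\mathcal{BD}(\grad)}^{\ast}u\rangle_{L_{2}}\\
&= \langle\pi_{\mathcal{BD}(\grad)}^{\ast}h|\pi_{\mathcal{BD}(\grad)}^{\ast}u\rangle_{H^{1}(|\grad|+\i)} = \langle h|u\rangle_{\mathcal{BD}(\grad)}=f(u).
\end{align*}
The main obstacle is spotting this candidate; once one recognises that the two $L_{2}$-terms produced by the duality reassemble exactly into the $H^{1}(|\grad|+\i)$ inner product restricted to $\mathcal{BD}(\grad)$, the choice $v=\pi_{\mathcal{BD}(\dive)}^{\ast}\stackrel{\bullet}{\grad}h$ is essentially forced and the verification reduces to the small algebraic check above.
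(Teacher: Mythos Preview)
Your proof is correct and follows essentially the same route as the paper's: both establish isometry from the fact that $(\dive-\dive_{c})v$ annihilates $H^{1}(|\grad_{c}|+\i)=\mathcal{BD}(\grad)^{\bot}$, and both obtain surjectivity by representing $f\in\mathcal{BD}(\grad)'$ via Riesz as $\langle h|\cdot\rangle_{\mathcal{BD}(\grad)}$ and then exhibiting the preimage $v=\pi_{\mathcal{BD}(\dive)}^{\ast}\stackrel{\bullet}{\grad}h$ with the same two-term $L_{2}$ computation reassembling into the $H^{1}(|\grad|+\i)$ inner product. Your write-up is slightly more explicit about why the annihilation holds and why the duality splits as $\langle\dive v|\cdot\rangle_{L_{2}}+\langle v|\grad\cdot\rangle_{L_{2}}$, but the argument is the same.
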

\begin{proof}
Since the functional $(\dive-\dive_{c})v$ vanishes on $\mathcal{BD}(\grad)^{\bot}=H^{1}(|\grad_{c}|+\i)$
we easily see that $\Phi$ is isometric. To show the surjectivity
of $\Phi$ we take $\varphi\in\mathcal{BD}(\grad)'.$ Then there exists
$\tilde{u}\in\mathcal{BD}(\grad)$ such that 
\begin{align*}
\varphi(u) & =\langle\tilde{u}|u\rangle_{\mathcal{BD}(\grad)}\\
 & =\langle\grad\pi_{\mathcal{BD}(\grad)}^{\ast}\tilde{u}|\grad\pi_{\mathcal{BD}(\grad)}^{\ast}u\rangle_{L_{2}(\Omega)^{n}}+\langle\pi_{\mathcal{BD}(\grad)}^{\ast}\tilde{u}|\pi_{\mathcal{BD}(\grad)}^{\ast}u\rangle_{L_{2}(\Omega)}\\
 & =\langle\pi_{\mathcal{BD}(\dive)}^{\ast}\stackrel{\bullet}{\grad}\tilde{u}|\grad\pi_{\mathcal{BD}(\grad)}^{\ast}u\rangle_{L_{2}(\Omega)^{n}}+\langle\dive\pi_{\mathcal{BD}(\dive)}^{\ast}\stackrel{\bullet}{\grad}\tilde{u}|\pi_{\mathcal{BD}(\grad)}^{\ast}u\rangle_{L_{2}(\Omega)}\\
 & =\Phi\left((\dive-\dive_{c})\pi_{\mathcal{BD}(\dive)}^{\ast}\stackrel{\bullet}{\grad}\tilde{u}\right)(u)
\end{align*}
for every $u\in\mathcal{BD}(\grad),$ which proves that $\Phi$ is
onto.\end{proof}
\begin{cor}
The spaces $H^{-\frac{1}{2}}(\partial\Omega)$ and $\mathcal{TR}(\dive)$
are isomorphic via the mapping $\Phi^{\ast}\circ\gamma'$, where $\gamma'$
denotes the dual mapping of $\gamma$, given in \prettyref{cor:BD(grad)}.
\end{cor}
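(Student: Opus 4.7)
The plan is to exhibit $\Phi^{\ast}\circ\gamma'$ as the composition of two unitary isomorphisms with compatible endpoints, so that the claim reduces to general Hilbert space duality. First I would invoke \prettyref{cor:BD(grad)} together with the equivalent norm on $H^{\frac{1}{2}}(\partial\Omega)$ introduced immediately after that corollary; this turns $\gamma\colon\mathcal{BD}(\grad)\to H^{\frac{1}{2}}(\partial\Omega)$ into a unitary map between Hilbert spaces. Equipping $H^{-\frac{1}{2}}(\partial\Omega)=H^{\frac{1}{2}}(\partial\Omega)'$ and $\mathcal{BD}(\grad)'$ with their canonical Hilbert space structures (via the Riesz representation theorem), standard duality then gives that the dual map $\gamma'\colon H^{-\frac{1}{2}}(\partial\Omega)\to\mathcal{BD}(\grad)'$ is itself unitary, in particular a Banach space isomorphism.

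Next I would invoke \prettyref{lem:TR(div)}, which asserts that $\Phi\colon\mathcal{TR}(\dive)\to\mathcal{BD}(\grad)'$ is unitary. In particular, this forces $\mathcal{TR}(\dive)$ to be a closed subspace of $H^{-1}(|\grad|+\i)$ and hence a Hilbert space in its own right, so that the Hilbert space adjoint $\Phi^{\ast}\colon\mathcal{BD}(\grad)'\to\mathcal{TR}(\dive)$ is well-defined, coincides with $\Phi^{-1}$, and is again unitary.

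Putting the two pieces together, the map
\[
\Phi^{\ast}\circ\gamma'\colon H^{-\frac{1}{2}}(\partial\Omega)\to\mathcal{BD}(\grad)'\to\mathcal{TR}(\dive)
\]
is a composition of two unitary maps and therefore is itself a (unitary, in particular Banach space) isomorphism, which is exactly the assertion. Since the whole argument reduces to assembling the two structural results \prettyref{cor:BD(grad)} and \prettyref{lem:TR(div)} already established above, I expect no real obstacle; the only point needing a brief verification is the compatibility of the ranges and domains of $\gamma'$ and $\Phi^{\ast}$, which is immediate from the surjectivity of $\gamma$ and $\Phi$.
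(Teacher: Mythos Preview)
Your proposal is correct and is exactly the intended argument: the paper states this corollary without proof, since it follows immediately from combining \prettyref{cor:BD(grad)} (with the renorming that makes $\gamma$ unitary) and \prettyref{lem:TR(div)} by taking duals and composing. Your write-up simply makes this composition explicit, and there is nothing more to it.
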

For a function $v\in H^{1}(|\dive|+\i)$ one can define the boundary
term $v\cdot N\in H^{-\frac{1}{2}}(\partial\Omega)$, where $N$ denotes
the unit outward normal vector field on $\partial\Omega$ (which exists
according to \cite[Lemma 4.2]{necas2011direct}) via Green's formula%
\footnote{Note that $\gamma^{\ast}=\gamma^{-1}:H^{\frac{1}{2}}(\partial\Omega)\to\mathcal{BD}(\grad)$
according to our choice of the norm on $H^{\frac{1}{2}}(\partial\Omega)$.%
}: 
\begin{align}
\langle v\cdot N|u\rangle_{H^{-\frac{1}{2}}(\partial\Omega),H^{\frac{1}{2}}(\partial\Omega)} & =\langle\dive v|E(u)\rangle_{L_{2}(\Omega)}+\langle v|\grad E(u)\rangle_{L_{2}(\Omega)^{n}}\label{eq:Green}\\
 & =\langle\dive v|\pi_{\mathcal{BD}(\grad)}^{\ast}\gamma^{\ast}u\rangle_{L_{2}(\Omega)}+\langle v|\grad\pi_{\mathcal{BD}(\grad)}^{\ast}\gamma^{\ast}u\rangle_{L_{2}(\Omega)^{n}}\nonumber \\
 & =\left\langle \left.\left(\dive-\dive_{c}\right)v\right|\pi_{\mathcal{BD}(\grad)}^{\ast}\gamma^{\ast}u\right\rangle _{H^{-1}(|\grad|+\i),H^{1}(|\grad|+\i)}\nonumber \\
 & =\Phi\left((\dive-\dive_{c})v\right)(\gamma^{\ast}u).\nonumber 
\end{align}

Finally, we recall a result from \cite{Picard2012_boundary_control}.
\begin{prop}[{\cite[Theorem 4.5]{Picard2012_boundary_control}}]
\label{prop:BD(div) and TR(div)} The operator 
\[
\tilde{\gamma}\coloneqq(\dive-\dive_{c})\pi_{\mathcal{BD}(\dive)}^{\ast}:\mathcal{BD}(\dive)\to\mathcal{TR}(\dive)
\]
is unitary.
\end{prop}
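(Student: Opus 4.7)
My plan is to show that $\tilde\gamma$ is a complex-linear isometric bijection; unitarity in the Hilbert space sense then follows by polarisation. Complex-linearity of $\tilde\gamma$ is immediate from the definition, so the two substantive steps are surjectivity and the isometry property.

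For surjectivity I would take $y\in\mathcal{TR}(\dive)$, write $y=(\dive-\dive_c)v$ for some $v\in H^{1}(|\dive|+\i)$, and split $v=v_c+v_b$ according to the orthogonal decomposition $H^{1}(|\dive|+\i)=H^{1}(|\dive_c|+\i)\oplus\mathcal{BD}(\dive)$. Since $\dive$ and $\dive_c$ coincide on $\mathcal{D}(\dive_c)$, their respective bounded extensions into $H^{-1}(|\grad|+\i)$ agree on all of $H^{1}(|\dive_c|+\i)$, yielding $(\dive-\dive_c)v_c=0$ and hence $y=\tilde\gamma(v_b)$.

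The heart of the argument is the isometry. Since $-\dive_c\colon L_{2}(\Omega)^{n}\to H^{-1}(|\grad|+\i)$ is the unique bounded extension of $\grad^{\ast}$, one obtains by a density argument the duality identity
\[
\langle\tilde\gamma(u)|\phi\rangle_{H^{-1},H^{1}}=\langle\dive u|\phi\rangle_{L_{2}(\Omega)}+\langle u|\grad\phi\rangle_{L_{2}(\Omega)^{n}}
\]
for every $u\in\mathcal{BD}(\dive)$ and $\phi\in H^{1}(|\grad|+\i)$. This pairing vanishes whenever $\phi\in H^{1}(|\grad_{c}|+\i)$, because then $\grad\phi=\grad_{c}\phi$ and $\dive=-\grad_{c}^{\ast}$. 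For $\phi\in\mathcal{BD}(\grad)$, I would use the characterisation $\mathcal{BD}(\dive)=\mathcal{N}(1-\grad\dive)$ to replace $u$ by $\grad\dive u$ in the second summand, which collapses the pairing to $\langle\dive u|\phi\rangle_{H^{1}(|\grad|+\i)}=\langle\stackrel{\bullet}{\dive}u|\phi\rangle_{\mathcal{BD}(\grad)}$; here I use that $\dive u\in\mathcal{BD}(\grad)$ already, by the inclusion $\dive[\mathcal{BD}(\dive)]\subseteq\mathcal{BD}(\grad)$, so $\stackrel{\bullet}{\dive}u=\dive u$.

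Combining the two cases with the orthogonal splitting $H^{1}(|\grad|+\i)=H^{1}(|\grad_{c}|+\i)\oplus\mathcal{BD}(\grad)$, the $H^{-1}(|\grad|+\i)$-norm of $\tilde\gamma(u)$ reduces to
\[
|\tilde\gamma(u)|_{H^{-1}(|\grad|+\i)}=\sup_{\phi\in\mathcal{BD}(\grad),\,|\phi|_{\mathcal{BD}(\grad)}\leq 1}\bigl|\langle\stackrel{\bullet}{\dive}u|\phi\rangle_{\mathcal{BD}(\grad)}\bigr|=\bigl|\stackrel{\bullet}{\dive}u\bigr|_{\mathcal{BD}(\grad)}=|u|_{\mathcal{BD}(\dive)},
\]
where the last equality exploits the unitarity of $\stackrel{\bullet}{\dive}$. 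The main obstacle I anticipate is carefully tracking the various extensions of $\dive_c$ and the $H^{-1}$-$H^{1}$ duality pairing in order to establish the identity reducing $\langle\tilde\gamma(u)|\phi\rangle$ to the $\mathcal{BD}(\grad)$-inner product; once this key reduction is in place, the remainder is a routine consequence of the orthogonal decomposition and the already-available unitarity of $\stackrel{\bullet}{\dive}$.
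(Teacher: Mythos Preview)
The paper does not supply its own proof of this proposition; it is quoted as \cite[Theorem 4.5]{Picard2012_boundary_control} without argument. Your proof is correct, and it is in fact the natural companion to the paper's proof of \prettyref{lem:TR(div)}: the two key observations you isolate---that $(\dive-\dive_c)v$ annihilates $H^{1}(|\grad_{c}|+\i)$, and that for $u\in\mathcal{BD}(\dive)$ and $\phi\in\mathcal{BD}(\grad)$ the duality pairing collapses to $\langle\dive u|\phi\rangle_{H^{1}(|\grad|+\i)}$ via $u=\grad\dive u$---are precisely the ingredients used there to show that $\Phi$ is isometric and onto. Your surjectivity step (discarding the $H^{1}(|\dive_{c}|+\i)$-component of a preimage) is likewise implicit in that Lemma, since surjectivity of $\Phi$ is obtained by exhibiting elements of the form $\tilde\gamma(\stackrel{\bullet}{\grad}\tilde u)$.

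One small point you should make explicit in a final write-up: your computation of $|\tilde\gamma(u)|_{H^{-1}(|\grad|+\i)}$ as a supremum over $H^{1}$-unit vectors presupposes that the pairing $\langle\cdot|\cdot\rangle_{H^{-1},H^{1}}$ realises $H^{-1}(|\grad|+\i)$ isometrically as the anti-dual of $H^{1}(|\grad|+\i)$. This is true here because $|(|\grad|+\i)\phi|_{L_{2}}=|(|\grad|-\i)\phi|_{L_{2}}$ for all $\phi$, so the adjoint of the Riesz map has the right norm; it is routine but deserves a sentence.
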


\subsection*{Some classical boundary conditions}

This subsection is devoted to the study of classical boundary conditions
within the framework of abstract boundary data spaces. Moreover, we
discuss which boundary conditions yield a maximal monotone realization
of the operator 
\begin{equation}
A\subseteq\left(\begin{array}{cc}
0 & \dive\\
\grad & 0
\end{array}\right).\label{eq:block_op}
\end{equation}

To avoid technicalities, we only treat the most simple cases of such
boundary conditions and refer to the next section for more advanced
examples. We start with inhomogeneous Dirichlet and Neumann boundary
conditions.

\subsubsection*{Dirichlet and Neumann boundary conditions}

Throughout let $(u,v)\in\mathcal{D}(\grad)\times\mathcal{D}(\dive)$.
The inhomogeneous Dirichlet boundary condition reads as 
\[
\gamma_{D}u=f,
\]

for some $f\in H^{\frac{1}{2}}(\partial\Omega).$ The latter is equivalent
to the fact that 
\[
\pi_{\mathcal{BD}(\grad)}u=\gamma^{\ast}f,
\]
by \prettyref{cor:BD(grad)} and thus, the boundary relation $h\subseteq\mathcal{BD}(\grad)\oplus\mathcal{BD}(\grad)$
may be given by 
\[
h\coloneqq\left\{ (\gamma^{\ast}f,y)\,|\, y\in\mathcal{BD}(\grad)\right\} .
\]
Obviously, this relation is maximal monotone and hence, the operator
$A$ with domain 
\begin{align*}
\mathcal{D}(A) & =\left\{ (u,v)\in\mathcal{D}(\grad)\times\mathcal{D}(\dive)\,\left|\,(\pi_{\mathcal{BD}(\grad)}u,\stackrel{\bullet}{\dive}\pi_{\mathcal{BD}(\dive)}v)\in h\right.\right\} \\
 & =\left\{ (u,v)\in\mathcal{D}(\grad)\times\mathcal{D}(\dive)\,\left|\,\pi_{\mathcal{BD}(\grad)}u=\gamma^{\ast}f\right.\right\} 
\end{align*}
is maximal monotone. Note that only in the case of $f=0,$ the operator
is skew-selfadjoint. \\
In the same way one might deal with Neumann boundary conditions, given
by 
\[
v\cdot N=g
\]
for some $g\in H^{-\frac{1}{2}}(\partial\Omega).$ Using \prettyref{eq:Green}
the latter means that for all $u\in H^{\frac{1}{2}}(\partial\Omega)$
we have that 
\[
\langle g|u\rangle_{H^{-\frac{1}{2}}(\partial\Omega),H^{\frac{1}{2}}(\partial\Omega)}=\Phi\left((\dive-\dive_{c})v\right)(\gamma^{\ast}u)
\]
or equivalently for all $w\in\mathcal{BD}(\grad)$
\[
\left(\gamma'g\right)(w)=\langle g|\gamma w\rangle_{H^{-\frac{1}{2}}(\partial\Omega),H^{\frac{1}{2}}(\partial\Omega)}=\Phi\left((\dive-\dive_{c})v\right)(w).
\]
Since $(\dive-\dive_{c})v=(\dive-\dive_{c})\pi_{\mathcal{BD}(\dive)}^{\ast}\pi_{\mathcal{BD}(\dive)}v,$
the latter means 
\[
\gamma'g=\Phi\tilde{\gamma}\pi_{\mathcal{BD}(\dive)}v
\]
or equivalently
\[
\pi_{\mathcal{BD}(\dive)}v=\tilde{\gamma}^{\ast}\Phi^{\ast}\left(\gamma'g\right).
\]
 using \prettyref{lem:TR(div)} and \prettyref{prop:BD(div) and TR(div)}.
Thus, the boundary relation $h$ is given by 
\[
h\coloneqq\left\{ \left.\left(x,\stackrel{\bullet}{\dive}\tilde{\gamma}^{\ast}\Phi^{\ast}\left(\gamma'g\right)\right)\,\right|\, x\in\mathcal{BD}(\grad)\right\} ,
\]
which is again maximal monotone and thus, the operator $A$ with domain
\begin{align*}
\mathcal{D}(A) & =\left\{ (u,v)\in\mathcal{D}(\grad)\times\mathcal{D}(\dive)\,\left|\,(\pi_{\mathcal{BD}(\grad)}u,\stackrel{\bullet}{\dive}\pi_{\mathcal{BD}(\dive)}v)\in h\right.\right\} \\
 & =\left\{ (u,v)\in\mathcal{D}(\grad)\times\mathcal{D}(\dive)\,\left|\,\stackrel{\bullet}{\dive}\pi_{\mathcal{BD}(\dive)}v=\stackrel{\bullet}{\dive}\tilde{\gamma}^{\ast}\Phi^{\ast}\left(\gamma'g\right)\right.\right\} \\
 & =\left\{ (u,v)\in\mathcal{D}(\grad)\times\mathcal{D}(\dive)\,\left|\,\pi_{\mathcal{BD}(\dive)}v=\tilde{\gamma}^{\ast}\Phi^{\ast}\left(\gamma'g\right)\right.\right\} 
\end{align*}
is maximal monotone. Note again that the operator gets skew-selfadjoint
if and only if $g=0$.

\subsubsection*{Robin type boundary conditions}

Let $(u,v)\in\mathcal{D}(\grad)\times\mathcal{D}(\dive)$. A Robin-type
boundary condition may be written as 
\begin{equation}
\iota'\iota\gamma_{D}u-v\cdot N=g,\label{eq:Robin_classical}
\end{equation}
for some $g\in H^{-\frac{1}{2}}(\partial\Omega)$, where $\iota$
denotes the embedding $H^{\frac{1}{2}}(\partial\Omega)\hookrightarrow L_{2}(\partial\Omega)$
and $\iota'$ denotes the embedding $L_{2}(\partial\Omega)\hookrightarrow H^{-\frac{1}{2}}(\partial\Omega)$
as in \prettyref{rem:embeddings}. The boundary condition \prettyref{eq:Robin_classical}
means that for each $w\in\mathcal{BD}(\grad)$ we have that 
\[
\langle\iota\gamma_{D}u|\iota\gamma w\rangle_{L_{2}(\partial\Omega)}-\Phi\left(\tilde{\gamma}\pi_{\mathcal{BD}(\dive)}v\right)(w)=\langle g|\gamma w\rangle_{H^{-\frac{1}{2}}(\partial\Omega),H^{\frac{1}{2}}(\partial\Omega)},
\]
or equivalently 
\begin{align*}
 & \langle\gamma^{\ast}\iota^{\ast}\iota\gamma_{\mathcal{BD}(\grad)}u|w\rangle_{\mathcal{BD}(\grad)}\\
 & =\langle\iota\gamma\pi_{\mathcal{BD}(\grad)}u|\iota\gamma w\rangle_{L_{2}(\partial\Omega)}\\
 & =\left(\gamma'g+\Phi\tilde{\gamma}\pi_{\mathcal{BD}(\dive)}v\right)(w)\\
 & =\Phi\tilde{\gamma}\left(\tilde{\gamma}^{\ast}\Phi^{\ast}\gamma'g+\pi_{\mathcal{BD}(\dive)}v\right)(w)\\
 & =\left\langle \left.(\dive-\dive_{c})\pi_{\mathcal{BD}(\dive)}^{\ast}\left(\tilde{\gamma}^{\ast}\Phi^{\ast}\gamma'g+\pi_{\mathcal{BD}(\dive)}v\right)\right|\pi_{\mathcal{BD}(\grad)}^{\ast}w\right\rangle _{H^{-1}(|\grad|+\i),H^{1}(|\grad|+\i)}\\
 & =\left\langle \left.\pi_{\mathcal{BD}(\grad)}^{\ast}\stackrel{\bullet}{\dive}\left(\tilde{\gamma}^{\ast}\Phi^{\ast}\gamma'g+\pi_{\mathcal{BD}(\dive)}v\right)\right|\pi_{\mathcal{BD}(\grad)}^{\ast}w\right\rangle _{L_{2}(\Omega)}\\
 & \quad+\left\langle \left.\grad\pi_{\mathcal{BD}(\grad)}^{\ast}\stackrel{\bullet}{\dive}\left(\tilde{\gamma}^{\ast}\Phi^{\ast}\gamma'g+\pi_{\mathcal{BD}(\dive)}v\right)\right|\grad\pi_{\mathcal{BD}(\grad)}^{\ast}w\right\rangle _{L_{2}(\Omega)^{n}}\\
 & =\left\langle \left.\stackrel{\bullet}{\dive}\left(\tilde{\gamma}^{\ast}\Phi^{\ast}\gamma'g+\pi_{\mathcal{BD}(\dive)}v\right)\right|w\right\rangle _{\mathcal{BD}(\grad)}.
\end{align*}
This gives 
\[
\gamma^{\ast}\iota^{\ast}\iota\gamma\pi_{\mathcal{BD}(\grad)}u=\stackrel{\bullet}{\dive}\left(\tilde{\gamma}^{\ast}\Phi^{\ast}\gamma'g+\pi_{\mathcal{BD}(\dive)}v\right),
\]
which leads to the following definition of the boundary relation $h$
\[
h\coloneqq\left\{ (x,y)\,\left|\,\gamma^{\ast}\iota^{\ast}\iota\gamma x=\stackrel{\bullet}{\dive}\tilde{\gamma}^{\ast}\Phi^{\ast}\gamma'g+y\right.\right\} .
\]
To see that this relation is maximal monotone, we state the following
trivial observation.
\begin{lem}
Let $H$ be a Hilbert space and $L\subseteq H\oplus H$. Moreover
let $(x,y)\in H\times H.$ Then $L$ is maximal monotone if and only
if 
\[
L+\{(x,y)\}=\left\{ (u+x,v+y)\,|\,(u,v)\in L\right\} 
\]
is maximal monotone.
\end{lem}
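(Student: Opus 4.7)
The plan is to observe that the translation map $\Theta\colon H\oplus H\to H\oplus H$, $(u,v)\mapsto(u+x,v+y)$, is a bijection with inverse $\Theta^{-1}(u,v)=(u-x,v-y)$, and that $L+\{(x,y)\}=\Theta[L]$. Since the statement of the lemma is symmetric (replacing $(x,y)$ by $(-x,-y)$ exchanges the two sides), it suffices to prove one direction: that $\Theta$ preserves both monotonicity and the non-existence of proper monotone extensions.

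For the monotonicity step, I would directly compute, for arbitrary $(u_1,v_1),(u_2,v_2)\in L$,
\[
\Re\langle (u_1+x)-(u_2+x)\,|\,(v_1+y)-(v_2+y)\rangle=\Re\langle u_1-u_2\,|\,v_1-v_2\rangle,
\]
so monotonicity of $L$ is equivalent to monotonicity of $\Theta[L]=L+\{(x,y)\}$.

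For the maximality step, assume $L$ is maximal monotone and let $M\subseteq H\oplus H$ be any monotone relation with $L+\{(x,y)\}\subseteq M$. Applying the monotonicity-preservation argument to $\Theta^{-1}$, the relation $\Theta^{-1}[M]$ is monotone and contains $\Theta^{-1}[\Theta[L]]=L$. Maximality of $L$ forces $\Theta^{-1}[M]=L$, hence $M=\Theta[L]=L+\{(x,y)\}$, proving maximality of the translate. The reverse implication follows by applying the same argument with the roles of $L$ and $L+\{(x,y)\}$ interchanged and $(x,y)$ replaced by $(-x,-y)$.

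There is no real obstacle: the whole content is that affine translation is an automorphism of $H\oplus H$ that leaves the difference $(u_1-u_2,v_1-v_2)$ invariant, so every condition phrased purely in terms of such differences (here, monotonicity, and therefore also its maximal version) transfers verbatim. The only bookkeeping issue is to apply the translation in the correct direction when comparing extensions, which is handled by using $\Theta^{-1}$.
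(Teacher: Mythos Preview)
Your proof is correct. The paper itself does not give a proof of this lemma; it introduces it as a ``trivial observation'' and states it without argument, so there is nothing to compare against beyond noting that your translation-invariance argument is exactly the natural way to justify it.
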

The latter gives, that $h$ is maximal monotone if and only if 
\[
h-\left\{ \left(0,\stackrel{\bullet}{\dive}\tilde{\gamma}^{\ast}\Phi^{\ast}\gamma'g\right)\right\} 
\]
is maximal monotone. The latter relation is nothing but the non-negative,
selfadjoint operator $\gamma^{\ast}\iota^{\ast}\iota\gamma$ and thus,
maximal monotone. We note here that a Robin boundary condition of
the form 
\begin{equation}
\iota'\iota\gamma_{D}u+v\cdot N=g\label{eq:Robin_bad}
\end{equation}
does not lead to a maximal monotone relation $h$ and hence not to
a maximal monotone realization of \prettyref{eq:block_op}. Indeed,
a boundary condition of the form \prettyref{eq:Robin_bad} would yield
a relation of the form 
\[
h=\left\{ (x,y)\,\left|\,-\gamma^{\ast}\iota^{\ast}\iota\gamma x=\stackrel{\bullet}{\dive}\tilde{\gamma}^{\ast}\Phi^{\ast}\gamma'g+y\right.\right\} ,
\]
which is not even monotone.
\begin{rem}
In applications it turns out that different boundary conditions are
imposed on different parts of the boundary. We refer the reader to
the next section, where in the concrete case of a contact problem
in visco-elasticity such boundary conditions are studied.
\end{rem}

\section{Examples}

\subsection{Port-Hamiltonian systems}

In this section we study so-called linear Port-Hamiltonian systems.
Originally, these systems were defined in the context of differential
forms in \cite{vanderSchaft2002}. However, we follow the notion given
in \cite{leGorrec2005}, \cite[Chapter 7]{Jacob_Zwart2012}. Throughout,
let $n\in\mathbb{N}$ and $a,b\in\mathbb{R}$ with $a<b.$ Moreover,
let $P_{1}\in\mathbb{C}^{n\times n}$ an invertible, selfadjoint matrix,
$P_{0}\in\mathbb{C}^{n\times n}$ be skew-selfadjoint and $\mathcal{H}\in L_{\infty}([a,b];\mathbb{C}^{n\times n})$
such that $\mathcal{H}(t)$ is selfadjoint and there exists $c>0$
with $\mathcal{H}(t)\geq c$ for almost every $t\in[a,b].$ The differential
operator under consideration is a suitable restriction $A$ of 
\[
P_{1}\partial\mathcal{H}+P_{0}\mathcal{H},
\]
with maximal domain, where $\partial:H^{1}([a,b];\mathbb{C}^{n})\subseteq L_{2}([a,b];\mathbb{C}^{n})\to L_{2}([a,b];\mathbb{C}^{n})$
denotes the usual weak derivative on $L_{2}.$ In particular we want
to characterize those restrictions $A$, which yield a maximal monotone
operator in a suitable Hilbert space. In case of a linear operator
$A$, a class of maximal monotone realizations was given in \cite[Section 4]{leGorrec2005},
\cite[Theorem 7.2.3]{Jacob_Zwart2012} (see also \prettyref{thm:lin_port}
below). 
\begin{lem}
Let $H$ be a Hilbert space and $P\in L(H)$ selfadjoint with $P\geq c>0.$
Moreover, let $A\subseteq H\oplus H$ be a maximal monotone relation.
We denote by $H_{P}$ the Hilbert space $H$ equipped with the weighted
inner product 
\[
\langle x|y\rangle_{H_{P}}\coloneqq\langle Px|y\rangle_{H}\quad(x,y\in H).
\]
Then $AP\coloneqq\left\{ (x,y)\in H_{P}\oplus H_{P}\,|\,(Px,y)\in A\right\} $
is maximal monotone in $H_{P}.$\end{lem}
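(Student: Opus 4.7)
The plan is to verify monotonicity directly and then establish maximality via Minty's Theorem by solving a Yosida-regularised equation and passing to the limit, in close analogy with the proof of \prettyref{prop:sim_bd_rel}.

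\textbf{Monotonicity.} For $(x_{1},y_{1}),(x_{2},y_{2})\in AP$ we have $(Px_{i},y_{i})\in A$, so using the self-adjointness of $P$ I would compute
\[
\Re\langle x_{1}-x_{2}\,|\,y_{1}-y_{2}\rangle_{H_{P}}=\Re\langle P(x_{1}-x_{2})\,|\,y_{1}-y_{2}\rangle_{H}=\Re\langle Px_{1}-Px_{2}\,|\,y_{1}-y_{2}\rangle_{H}\geq0,
\]
which gives monotonicity of $AP$ in $H_{P}$.

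\textbf{Construction of approximants.} By \prettyref{thm:Minty} it suffices to show that $1+AP$ is onto $H_{P}$. Given $f\in H_{P}$, I want $(x,y)\in AP$ with $x+y=f$, i.e.\ $(Px,f-x)\in A$. For each $\lambda>0$, consider the single-valued map $\Phi_{\lambda}:H_{P}\to H_{P}$, $\Phi_{\lambda}(x)\coloneqq x+A_{\lambda}(Px)$, where $A_{\lambda}$ is the Yosida approximation of $A$. Using that $A_{\lambda}$ is monotone and Lipschitz on $H$ and that $P$ is bounded with $P\geq c$, a direct computation (as in the monotonicity step above) shows that $\Phi_{\lambda}$ is Lipschitz continuous and strongly monotone in $H_{P}$ with constant $1$. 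Hence $\Phi_{\lambda}$ is a homeomorphism of $H_{P}$ onto itself, so there is a unique $x_{\lambda}\in H_{P}$ with $x_{\lambda}+A_{\lambda}(Px_{\lambda})=f$.

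\textbf{Boundedness and Cauchy property.} Pick any $x_{0}\in[H]A$ and set $x^{\ast}\coloneqq P^{-1}x_{0}$, so that $Px^{\ast}\in[H]A$. Using the strong monotonicity of $\Phi_{\lambda}$ in $H_{P}$ I would estimate
\[
|x_{\lambda}-x^{\ast}|_{H_{P}}^{2}\leq\Re\langle x_{\lambda}-x^{\ast}\,|\,f-x^{\ast}-A_{\lambda}(Px^{\ast})\rangle_{H_{P}},
\]
and conclude $|x_{\lambda}|_{H_{P}}\leq|f|_{H_{P}}+2|x^{\ast}|_{H_{P}}+\|P\|\,|A^{0}(Px^{\ast})|$ (using \prettyref{prop:properties_A}(c)), which in turn bounds $y_{\lambda}\coloneqq A_{\lambda}(Px_{\lambda})=f-x_{\lambda}$ uniformly in $\lambda$ by some constant $C$. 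For $\lambda,\mu\in(0,1]$, inserting $Px_{\lambda}=\lambda y_{\lambda}+(1+\lambda A)^{-1}(Px_{\lambda})$ into
\[
|x_{\lambda}-x_{\mu}|_{H_{P}}^{2}=\Re\langle Px_{\lambda}-Px_{\mu}\,|\,y_{\mu}-y_{\lambda}\rangle_{H}
\]
and using \prettyref{prop:properties_A}(b) together with monotonicity of $A$ to kill the resolvent cross-term, I would obtain $|x_{\lambda}-x_{\mu}|_{H_{P}}^{2}\leq2C^{2}(\lambda+\mu)$, exactly as in the proof of \prettyref{prop:sim_bd_rel}.

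\textbf{Passage to the limit.} Let $x\coloneqq\lim_{\lambda\to0}x_{\lambda}$ in $H_{P}$ (equivalently in $H$, since the norms are equivalent), and set $y\coloneqq f-x$. Then $y_{\lambda}\to y$ in $H$, and $(1+\lambda A)^{-1}(Px_{\lambda})=Px_{\lambda}-\lambda y_{\lambda}\to Px$ in $H$. Since $\bigl((1+\lambda A)^{-1}(Px_{\lambda}),y_{\lambda}\bigr)\in A$ (by \prettyref{prop:properties_A}(b)), demi-closedness of $A$ (see \prettyref{rem:demi}) gives $(Px,y)\in A$, hence $(x,y)\in AP$ with $x+y=f$. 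So $1+AP$ is onto $H_{P}$ and maximal monotonicity of $AP$ follows from \prettyref{thm:Minty}.

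The whole argument is essentially a weighted rerun of \prettyref{prop:sim_bd_rel}; the only place demanding care is the bookkeeping between the $H$- and $H_{P}$-inner products in the boundedness/Cauchy estimates, which however causes no real trouble because $P$ is bounded and bounded below.
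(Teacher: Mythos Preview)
Your proof is correct, but it takes a far more laborious route than the paper. The paper dispenses with Minty's theorem and Yosida approximations entirely and argues maximality directly from the definition: if $(u,v)\in H_{P}\oplus H_{P}$ satisfies $\Re\langle x-u\,|\,y-v\rangle_{H_{P}}\geq0$ for every $(x,y)\in AP$, then rewriting the inner product gives $\Re\langle Px-Pu\,|\,y-v\rangle_{H}\geq0$ for every $(Px,y)\in A$; since $P\geq c>0$ is invertible, $(Px,y)$ ranges over all of $A$ as $(x,y)$ ranges over $AP$, so maximality of $A$ in $H$ forces $(Pu,v)\in A$, i.e.\ $(u,v)\in AP$. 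That is the whole argument.

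What your approach buys is nothing extra here: the invertibility of $P$ makes the correspondence $(x,y)\leftrightarrow(Px,y)$ a bijection between $AP$ and $A$, and the identity $\langle x\,|\,y\rangle_{H_{P}}=\langle Px\,|\,y\rangle_{H}$ transports monotonicity and maximality verbatim. Your Yosida machinery would be the right tool if $P$ were merely bounded but not bounded below (so that $P$ need not be invertible), in which case the direct transfer fails; that is precisely the situation of \prettyref{prop:sim_bd_rel}, which is why the heavier argument is needed there. In the present lemma, though, the strict positivity hypothesis $P\geq c>0$ is doing real work, and exploiting it collapses the proof to a few lines.
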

\begin{proof}
The monotonicity of $AP$ in $H_{P}$ is clear. Moreover, if $(u,v)\in H_{P}\oplus H_{P}$
satisfies 
\[
\Re\langle x-u|y-v\rangle_{H_{P}}\geq0
\]
for each $(x,y)\in AP,$ then 
\[
\Re\langle\tilde{x}-Pu|y-v\rangle_{H}\geq0
\]
for each $(\tilde{x},y)\in A$ and thus, $(Pu,v)\in A.$ The latter
yields $(u,v)\in AP$ and thus, $AP$ is maximal monotone.
\end{proof}
The last lemma shows, that we can assume without loss of generality
that $\mathcal{H}(t)=1$ for each $t\in[a,b]$. For linear maximal
monotone operators $A$, the last lemma was also shown in \cite[Lemma 7.2.2]{Jacob_Zwart2012}
with an alternative proof. Moreover, since $P_{0}$ is skew-selfadjoint,
it suffices to treat the case $P_{0}=0,$ since if $P_{1}\partial+P_{0}$
is maximal monotone then so is $P_{1}\partial$ and vice versa.

Thus, we are led to consider maximal monotone realizations of the
operator $P_{1}\partial$. Although, this operator seems not to be
of the form discussed in Section 3, a simple trick adopted from \cite{Picard2012_mother}
will allow us to write the operator as a block operator matrix of
the form $\left(\begin{array}{cc}
0 & D\\
G & 0
\end{array}\right)$. Without loss of generality we assume that the interval $[a,b]$
is symmetric around $0$, i.e. $a=-b.$ We consider the following
operators.
\begin{defn*}
Let $L_{2,\mathrm{e}}([-b,b];\mathbb{C}^{n})\coloneqq\left\{ f\in L_{2}([-b,b];\mathbb{C}^{n})\,|\, f(x)=f(-x)\quad x\in[-b,b]\mbox{ a.e.}\right\} $
and $L_{2,\mathrm{o}}([-b,b];\mathbb{C}^{n})\coloneqq\left\{ f\in L_{2}([-b,b];\mathbb{C}^{n})\,|\, f(x)=-f(-x)\quad x\in[-b,b]\mbox{ a.e.}\right\} $.
Then clearly, $L_{2,\mathrm{e}}$ and $L_{2,\mathrm{o}}$ are orthogonal
closed subspaces of $L_{2}$ such that 
\[
L_{2}([-b,b];\mathbb{C}^{n})=L_{2,\mathrm{e}}([-b,b];\mathbb{C}^{n})\oplus L_{2,\mathrm{o}}([-b,b];\mathbb{C}^{n}).
\]
We denote the corresponding projectors by $\pi_{\mathrm{e}}$ and
$\pi_{\mathrm{o}},$ respectively. Moreover, we define 
\[
\partial_{\mathrm{e}}:H^{1}([-b,b];\mathbb{C}^{n})\cap L_{2,\mathrm{e}}([-b,b];\mathbb{C}^{n})\subseteq L_{2,\mathrm{e}}([-b,b];\mathbb{C}^{n})\to L_{2,\mathrm{o}}([-b,b];\mathbb{C}^{n})
\]
and 
\[
\partial_{\mathrm{o}}:H^{1}([-b,b];\mathbb{C}^{n})\cap L_{2,\mathrm{o}}([-b,b];\mathbb{C}^{n})\subseteq L_{2,\mathrm{o}}([-b,b];\mathbb{C}^{n})\to L_{2,\mathrm{e}}([-b,b];\mathbb{C}^{n})
\]
as the usual weak derivative restricted to the even and odd functions,
respectively. Consequently, 
\[
\partial=\left(\begin{array}{cc}
\pi_{\mathrm{e}}^{\ast} & \pi_{\mathrm{o}}^{\ast}\end{array}\right)\left(\begin{array}{cc}
0 & \partial_{\mathrm{o}}\\
\partial_{\mathrm{e}} & 0
\end{array}\right)\left(\begin{array}{c}
\pi_{\mathrm{e}}\\
\pi_{\mathrm{o}}
\end{array}\right),
\]
which shows that $\partial$ and $\left(\begin{array}{cc}
0 & \partial_{\mathrm{o}}\\
\partial_{\mathrm{e}} & 0
\end{array}\right)$ are unitarily equivalent.\end{defn*}
\begin{lem}
We set $\partial_{\mathrm{o},c}\coloneqq-\partial_{\mathrm{e}}^{\ast}$
and $\partial_{\mathrm{e},c}\coloneqq-\partial_{\mathrm{o}}^{\ast}$.
Then $\partial_{\mathrm{o},c}\subseteq\partial_{\mathrm{o}}$ and
$\partial_{\mathrm{e},c}\subseteq\partial_{\mathrm{e}}$ with $\mathcal{D}(\partial_{\mathrm{o,}c})=\left\{ u\in\mathcal{D}(\partial_{\mathrm{o}})\,|\, u(-b)=u(b)=0\right\} $
and $\mathcal{D}(\partial_{\mathrm{e},c})=\left\{ u\in\mathcal{D}(\partial_{\mathrm{e}})\,|\, u(-b)=u(b)=0\right\} $.
Moreover, 
\[
\mathcal{BD}(\partial_{\mathrm{o}})=\lspan\left\{ \sinh e_{i}\,|\, i\in\{1,\ldots,n\}\right\} \mbox{ and }\mathcal{BD}(\partial_{\mathrm{e}})=\lspan\left\{ \cosh e_{i}\,|\, i\in\{1,\ldots,n\}\right\} ,
\]
where $e_{i}$ denotes the $i$-th canonical basis vector in $\mathbb{C}^{n}.$\end{lem}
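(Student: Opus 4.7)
The plan is to prove the lemma in two stages: first, I identify $\partial_{\mathrm{o},c}$ and $\partial_{\mathrm{e},c}$ as restrictions of $\partial_{\mathrm{o}}$ and $\partial_{\mathrm{e}}$ by vanishing endpoint values; then I compute the boundary data spaces as solution spaces of a scalar second-order ODE with parity constraints.

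Stage one relies on two integration-by-parts computations. Starting from the definition $\partial_{\mathrm{o},c}=-\partial_{\mathrm{e}}^{\ast}$ and $u\in\mathcal{D}(\partial_{\mathrm{o},c})$, I would first pair $u$ with even bump functions $\phi\in C_c^{\infty}(-b,b;\mathbb{C}^n)\cap L_{2,\mathrm{e}}$ (obtainable by symmetrising any compactly supported test function). The adjoint identity then collapses to the distributional derivative identity, so $u$ has a weak $L_2$ derivative equal to $\partial_{\mathrm{o},c}u$; hence $u\in H^1\cap L_{2,\mathrm{o}}=\mathcal{D}(\partial_{\mathrm{o}})$ with $\partial u=\partial_{\mathrm{o},c}u$, which proves the inclusion. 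Next, testing against \emph{arbitrary} $\phi\in\mathcal{D}(\partial_{\mathrm{e}})$ and integrating by parts classically produces the boundary term $[\phi\bar u]_{-b}^{b}$. Since $\phi$ is even and $u$ odd, this reduces to $2\phi(b)\overline{u(b)}$; its vanishing for every even $H^1$-function $\phi$, in particular for the constant functions $\phi=e_i$, forces $u(b)=0$ componentwise and then $u(-b)=-u(b)=0$ by oddness. The reverse inclusion is a one-line integration by parts. The claim for $\partial_{\mathrm{e},c}$ follows by swapping the roles of even and odd functions.

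For stage two I would invoke the equivalent description $\mathcal{BD}(G)=\mathcal{N}(1-DG)$ recorded in Subsection~2.2. After identifying $G=\partial_{\mathrm{o}}$ and $D=\partial_{\mathrm{e}}$ via the inclusions just established together with the closedness of $\partial_{\mathrm{o}},\partial_{\mathrm{e}}$, a member of $\mathcal{BD}(\partial_{\mathrm{o}})$ is an odd $u\in H^1([-b,b];\mathbb{C}^n)$ with $\partial_{\mathrm{o}}u\in\mathcal{D}(\partial_{\mathrm{e}})$ satisfying $u''=u$ in the $L_2$ sense. The $\mathbb{C}^n$-valued ODE decouples componentwise into the scalar equation $y''=y$, whose odd solutions form the one-dimensional line $\mathbb{C}\sinh$. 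Hence $\mathcal{BD}(\partial_{\mathrm{o}})=\lspan\{\sinh\,e_i\,|\,i\in\{1,\ldots,n\}\}$, and the even counterpart yields $\lspan\{\cosh\,e_i\,|\,i\in\{1,\ldots,n\}\}$ by the same argument.

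The main technical obstacle I expect is the first step: producing enough genuinely even compactly supported test functions and verifying that the distributional derivative extracted via the adjoint automatically lives in the correct parity subspace (it does, because $\partial_{\mathrm{e}}^{\ast}$ maps into $L_{2,\mathrm{e}}$, reflecting that differentiation swaps parity). Once the endpoint condition has been isolated, the boundary data space computation reduces to solving a decoupled family of scalar second-order ODEs on a symmetric interval, which is entirely routine.
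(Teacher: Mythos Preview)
Your argument is correct; the paper itself omits the proof entirely as ``straightforward,'' so there is nothing to compare against, and your two-stage outline (integration by parts to pin down the endpoint conditions, then solving the decoupled ODE $u''=u$ under the parity constraint to identify the boundary data spaces) is exactly the routine verification the author is leaving to the reader. The only place worth a sentence of extra care is the passage from ``adjoint identity against even test functions'' to ``$u\in H^{1}$'': you should make explicit that for an odd test function $\psi_{\mathrm{o}}$ both $\langle\psi_{\mathrm{o}}'\,|\,u\rangle$ and $\langle\psi_{\mathrm{o}}\,|\,\partial_{\mathrm{o},c}u\rangle$ vanish by parity, so the distributional identity extends from even test functions to all of $C_c^\infty(-b,b;\mathbb{C}^n)$.
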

\begin{proof}
The proof is straightforward and we therefore omit it.
\end{proof}
We come back to the operator $P_{1}\partial$, which can be written
as 
\begin{align*}
\left(\begin{array}{cc}
\pi_{\mathrm{e}}^{\ast} & \pi_{\mathrm{o}}^{\ast}\end{array}\right)\left(\begin{array}{cc}
\pi_{\mathrm{e}}P_{1}\pi_{\mathrm{e}}^{\ast} & 0\\
0 & \pi_{\mathrm{o}}P_{1}\pi_{\mathrm{o}}^{\ast}
\end{array}\right)\left(\begin{array}{cc}
0 & \partial_{\mathrm{o}}\\
\partial_{\mathrm{e}} & 0
\end{array}\right)\left(\begin{array}{c}
\pi_{\mathrm{e}}\\
\pi_{\mathrm{o}}
\end{array}\right) & =\left(\begin{array}{cc}
\pi_{\mathrm{e}}^{\ast} & \pi_{\mathrm{o}}^{\ast}\end{array}\right)\left(\begin{array}{cc}
0 & \pi_{\mathrm{e}}P_{1}\pi_{\mathrm{e}}^{\ast}\partial_{\mathrm{o}}\\
\pi_{\mathrm{o}}P_{1}\pi_{\mathrm{o}}^{\ast}\partial_{\mathrm{e}} & 0
\end{array}\right)\left(\begin{array}{c}
\pi_{\mathrm{e}}\\
\pi_{\mathrm{o}}
\end{array}\right)\\
 & =\left(\begin{array}{cc}
\pi_{\mathrm{e}}^{\ast} & \pi_{\mathrm{o}}^{\ast}\end{array}\right)\left(\begin{array}{cc}
0 & \pi_{\mathrm{e}}P_{1}\pi_{\mathrm{e}}^{\ast}\partial_{\mathrm{o}}\\
\partial_{\mathrm{e}}\pi_{\mathrm{e}}P_{1}\pi_{\mathrm{e}}^{\ast} & 0
\end{array}\right)\left(\begin{array}{c}
\pi_{\mathrm{e}}\\
\pi_{\mathrm{o}}
\end{array}\right)
\end{align*}
and the operator $\left(\begin{array}{cc}
0 & \pi_{\mathrm{e}}P_{1}\pi_{\mathrm{e}}^{\ast}\partial_{\mathrm{o}}\\
\partial_{\mathrm{e}}\pi_{\mathrm{e}}P_{1}\pi_{\mathrm{e}}^{\ast} & 0
\end{array}\right)$ now fits into our abstract framework with $D\coloneqq\pi_{\mathrm{e}}P_{1}\pi_{\mathrm{e}}^{\ast}\partial_{\mathrm{o}},\, D_{c}\coloneqq\pi_{\mathrm{e}}P_{1}\pi_{\mathrm{e}}^{\ast}\partial_{\mathrm{o},c}$
and $G\coloneqq\partial_{\mathrm{e}}\pi_{\mathrm{e}}P_{1}\pi_{\mathrm{e}}^{\ast},\, G_{c}\coloneqq\partial_{\mathrm{e},c}\pi_{\mathrm{e}}P_{1}\pi_{\mathrm{e}}^{\ast}$. 
\begin{rem}
\label{rem:n-dimensional BD(G)}We note that $\mathcal{BD}(G)$ and
$\mathcal{BD}(D)$ are both $n$-dimensional spaces. More precisely,
let $\lambda_{1},\ldots,\lambda_{n}$ denote the eigenvalues of the
symmetric and invertible matrix $P_{1}$ counted with multiplicity.
Moreover, we denote by $b_{1},\ldots,b_{n}$ the corresponding pairwise
orthonormal eigenvectors. Then 
\begin{align*}
\mathcal{BD}(G) & =\lspan\left\{ \left.\left(x\mapsto\cosh\left(\lambda_{i}^{-1}x\right)b_{i}\right)\,\right|\, i\in\{1,\ldots,n\}\right\} ,\\
\mathcal{BD}(D) & =\lspan\left\{ \left.\left(x\mapsto\sinh\left(\lambda_{i}^{-1}x\right)b_{i}\right)\,\right|\, i\in\{1,\ldots,n\}\right\} .
\end{align*}
\end{rem}
\begin{thm}
\label{thm:port_ham} Let $A\subseteq P_{1}\partial\mathcal{H}+P_{0}\mathcal{H}$
an arbitrary (possibly nonlinear) restriction and let $G,G_{c},D,D_{c}$
as above. Then $A$ is maximal monotone with respect to the weighted
inner product 
\[
\langle u|v\rangle_{\mathcal{H}}\coloneqq\langle\mathcal{H}u|v\rangle_{L_{2}([-b,b];\mathbb{C}^{n})}\quad(u,v\in L_{2}([-b,b];\mathbb{C}^{n}))
\]
if and only if there exists a maximal monotone relation $h\subseteq\mathcal{BD}(G)\oplus\mathcal{BD}(G)$
such that 
\[
\mathcal{D}(A)=\left\{ \left.u\in L_{2}([-b,b];\mathbb{C}^{n})\,\right|\,\mathcal{H}u\in H^{1}([-b,b];\mathbb{C}^{n}),\:\left(\pi_{\mathcal{BD}(G)}\pi_{\mathrm{e}}\mathcal{H}u,\stackrel{\bullet}{D}\pi_{\mathcal{BD}(D)}\pi_{\mathrm{o}}\mathcal{H}u\right)\in h\right\} .
\]
\end{thm}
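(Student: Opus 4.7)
The plan is to perform a sequence of reductions that bring the operator $P_{1}\partial\mathcal{H}+P_{0}\mathcal{H}$ into the abstract block-matrix form of Section 3, and then apply \prettyref{thm:char_max_mon} directly.

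First, I would use the preceding lemma to absorb $\mathcal{H}$: a restriction $A\subseteq(P_{1}\partial+P_{0})\mathcal{H}$ is maximal monotone in $L_{2,\mathcal{H}}\coloneqq(L_{2}([-b,b];\mathbb{C}^{n}),\langle\cdot|\cdot\rangle_{\mathcal{H}})$ if and only if the relation $\tilde{A}\coloneqq\{(\mathcal{H}u,v)\,|\,(u,v)\in A\}\subseteq P_{1}\partial+P_{0}$ is maximal monotone in $L_{2}([-b,b];\mathbb{C}^{n})$ with the standard inner product. Thus from now on I may assume $\mathcal{H}=1$ and the domain condition involves $u$ rather than $\mathcal{H}u$; at the very end, substituting back $u\rightsquigarrow\mathcal{H}u$ produces the domain description in the theorem. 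Second, since $P_{0}$ is skew-selfadjoint and bounded on $L_{2}$, adding $-P_{0}$ to any relation preserves (maximal) monotonicity: explicitly, $\tilde{A}$ is maximal monotone if and only if $\tilde{A}-P_{0}=\{(u,v-P_{0}u)\,|\,(u,v)\in\tilde{A}\}\subseteq P_{1}\partial$ is maximal monotone. This reduces the problem to characterizing maximal monotone restrictions of $P_{1}\partial$, without changing the domain.

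Next I would exploit the even/odd decomposition recalled just before the theorem: the operator $\partial$ is unitarily equivalent via $U\coloneqq\begin{pmatrix}\pi_{\mathrm{e}}\\\pi_{\mathrm{o}}\end{pmatrix}:L_{2}([-b,b];\mathbb{C}^{n})\to L_{2,\mathrm{e}}\oplus L_{2,\mathrm{o}}$ to $\begin{pmatrix}0 & \partial_{\mathrm{o}}\\\partial_{\mathrm{e}} & 0\end{pmatrix}$, and $P_{1}$ (viewed as multiplication, commuting with $\pi_{\mathrm{e}}, \pi_{\mathrm{o}}$ since it is constant) gets transformed into $\begin{pmatrix}\pi_{\mathrm{e}}P_{1}\pi_{\mathrm{e}}^{\ast} & 0\\0 & \pi_{\mathrm{o}}P_{1}\pi_{\mathrm{o}}^{\ast}\end{pmatrix}$. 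Using the computation already displayed in the paper, $U P_{1}\partial U^{\ast}=\begin{pmatrix}0 & D\\G & 0\end{pmatrix}$ with $D=\pi_{\mathrm{e}}P_{1}\pi_{\mathrm{e}}^{\ast}\partial_{\mathrm{o}}$, $G=\partial_{\mathrm{e}}\pi_{\mathrm{e}}P_{1}\pi_{\mathrm{e}}^{\ast}$ and the corresponding $D_{c},G_{c}$ defined with $\partial_{\mathrm{o},c},\partial_{\mathrm{e},c}$ in place of $\partial_{\mathrm{o}},\partial_{\mathrm{e}}$. I then need to verify the hypotheses of Section~3 for this pair: namely that $D=-G_{c}^{\ast}$ (so in particular $D_{c}\subseteq D=-G_{c}^{\ast}$), which follows from $(\partial_{\mathrm{e},c}\pi_{\mathrm{e}}P_{1}\pi_{\mathrm{e}}^{\ast})^{\ast}=\pi_{\mathrm{e}}P_{1}\pi_{\mathrm{e}}^{\ast}\partial_{\mathrm{e},c}^{\ast}=-\pi_{\mathrm{e}}P_{1}\pi_{\mathrm{e}}^{\ast}\partial_{\mathrm{o}}=-D$, using $\partial_{\mathrm{e},c}=-\partial_{\mathrm{o}}^{\ast}$ and selfadjointness of $P_{1}$.

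With this setup in place, \prettyref{thm:char_max_mon} applies verbatim to $U\tilde{A}U^{\ast}$: it is maximal monotone if and only if there is a maximal monotone $h\subseteq\mathcal{BD}(G)\oplus\mathcal{BD}(G)$ with
\[
\mathcal{D}(U\tilde{A}U^{\ast})=\left\{(x,y)\in\mathcal{D}(G)\times\mathcal{D}(D)\,\left|\,\bigl(\pi_{\mathcal{BD}(G)}x,\stackrel{\bullet}{D}\pi_{\mathcal{BD}(D)}y\bigr)\in h\right.\right\}.
\]
Pulling this back through the unitary $U$, the pair $(x,y)=(\pi_{\mathrm{e}}v,\pi_{\mathrm{o}}v)$ lies in $\mathcal{D}(G)\times\mathcal{D}(D)$ exactly when $v\in H^{1}([-b,b];\mathbb{C}^{n})$, so the condition becomes the stated constraint on $\pi_{\mathcal{BD}(G)}\pi_{\mathrm{e}}v$ and $\stackrel{\bullet}{D}\pi_{\mathcal{BD}(D)}\pi_{\mathrm{o}}v$. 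Finally, undoing the initial substitution by replacing $v$ by $\mathcal{H}u$ yields precisely the domain description in the theorem. I expect the main technical obstacle to be bookkeeping: checking carefully that the two preliminary reductions (absorbing $\mathcal{H}$ and removing $P_{0}$) commute with the even/odd unitary equivalence and do not alter the relation $h$, and verifying the adjoint identities $D=-G_{c}^{\ast}$, $G=-D_{c}^{\ast}$ needed so that \prettyref{thm:char_max_mon} can be invoked.
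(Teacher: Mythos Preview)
Your proposal is correct and follows exactly the route the paper takes: the paper's own proof is the single sentence ``This is a direct consequence of \prettyref{thm:char_max_mon} and the considerations above,'' where ``the considerations above'' are precisely the three reductions you spell out (absorbing $\mathcal{H}$ via the preceding lemma, dropping the skew-selfadjoint $P_{0}$, and conjugating by the even/odd unitary to reach the block form $\left(\begin{smallmatrix}0 & D\\ G & 0\end{smallmatrix}\right)$). Your write-up simply makes those steps explicit, including the adjoint check $D=-G_{c}^{\ast}$, so there is nothing to add.
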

\begin{proof}
This is a direct consequence of \prettyref{thm:char_max_mon} and
the considerations above.
\end{proof}
In \cite{Jacob_Zwart2012} we find a characterization for a class
of maximal monotone realizations of $P_{1}\partial_{1}\mathcal{H}+P_{0}\mathcal{H}$
in terms of the so-called boundary flow and boundary effort, defined
as 
\begin{align*}
f_{\partial}(u)\coloneqq & \frac{1}{\sqrt{2}}\left(-P_{1}\mathcal{H}u(b)+P_{1}\mathcal{H}u(-b)\right)
\end{align*}
and 
\[
e_{\partial}(u)\coloneqq\frac{1}{\sqrt{2}}\left(\mathcal{H}u(b)+\mathcal{H}u(-b)\right),
\]
respectively. In the next lemma we provide a formulation of these
terms within our framework.
\begin{lem}
\label{lem:flow and effort}Let $G$ be as above and denote by $\lambda_{1},\ldots,\lambda_{n}$
the eigenvalues of $P_{1}$ (counted with multiplicity) and by $b_{1},\ldots,b_{n}\in\mathbb{C}^{n}$
the corresponding pairwise orthonormal eigenvectors. Define 
\begin{align*}
Q:\mathcal{BD}(G) & \to\mathbb{C}^{n}\\
v & \mapsto\sqrt{2}v(b)
\end{align*}
and $S\in\mathbb{C}^{n\times n}$ via $Sb_{i}\coloneqq\lambda_{i}\tanh(\lambda_{i}^{-1}b)b_{i}$
for $i\in\{1,\ldots,n\}$. Then $S$ is selfadjoint and strictly positive
definite and $\sqrt{S}Q$ is unitary. Moreover, for $\mathcal{H}u\in H^{1}([-b,b];\mathbb{C}^{n})$
we have that
\begin{align*}
e_{\partial}(u) & =Q\left(\pi_{\mathcal{BD}(G)}\pi_{\mathrm{e}}\mathcal{H}u\right)\\
f_{\partial}(u) & =-SQ\left(\stackrel{\bullet}{D}\pi_{\mathcal{BD}(D)}\pi_{\mathrm{o}}\mathcal{H}u\right).
\end{align*}
\end{lem}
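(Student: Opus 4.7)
The plan is to reduce every claim to an explicit finite-dimensional computation in the eigenbasis $b_{1},\ldots,b_{n}$ of $P_{1}$, exploiting the explicit description of $\mathcal{BD}(G)$ and $\mathcal{BD}(D)$ in the preceding remark. Since $P_{1}$ is selfadjoint and invertible, the eigenvalues $\lambda_{i}$ are real and nonzero, so $S$ is by construction diagonal in the orthonormal basis $(b_{i})$ with eigenvalues $\lambda_{i}\tanh(\lambda_{i}^{-1}b)$; these are strictly positive because the factors $\lambda_{i}$ and $\tanh(\lambda_{i}^{-1}b)$ share the same sign. This immediately yields selfadjointness and strict positive definiteness of $S$.

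For the unitarity of $\sqrt{S}Q$, set $w_{i}(x):=\cosh(\lambda_{i}^{-1}x)b_{i}$, a basis of $\mathcal{BD}(G)$. A short computation gives $Gw_{i}(x)=\sinh(\lambda_{i}^{-1}x)b_{i}$. Since $(|G|+\i)^{\ast}(|G|+\i)=G^{\ast}G+1$, the $\mathcal{BD}(G)$-inner product equals $\langle\cdot|\cdot\rangle_{L_{2}}+\langle G\cdot|G\cdot\rangle_{L_{2}}$; orthonormality of the $b_{i}$ kills the cross terms and $\cosh^{2}+\sinh^{2}=\cosh(2\cdot)$ evaluates the diagonal integrals to $\langle w_{i}|w_{j}\rangle_{\mathcal{BD}(G)}=\delta_{ij}\lambda_{i}\sinh(2\lambda_{i}^{-1}b)$. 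On the other hand $\sqrt{S}Qw_{i}=\sqrt{2}\cosh(\lambda_{i}^{-1}b)\sqrt{\lambda_{i}\tanh(\lambda_{i}^{-1}b)}\,b_{i}$, whose squared norm simplifies via the double-angle identity to the same value $\lambda_{i}\sinh(2\lambda_{i}^{-1}b)$. Hence $\sqrt{S}Q$ is an isometry between $n$-dimensional spaces, i.e.\ unitary.

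For the boundary-data identities, write $\mathcal{H}u=\pi_{\mathrm{e}}\mathcal{H}u+\pi_{\mathrm{o}}\mathcal{H}u$; evaluating at $\pm b$ gives $e_{\partial}(u)=\sqrt{2}(\pi_{\mathrm{e}}\mathcal{H}u)(b)$ and $f_{\partial}(u)=-\sqrt{2}P_{1}(\pi_{\mathrm{o}}\mathcal{H}u)(b)$. Next, decompose $\pi_{\mathrm{e}}\mathcal{H}u$ along $H^{1}(|G|+\i)=H^{1}(|G_{c}|+\i)\oplus\mathcal{BD}(G)$: the $G_{c}$-part belongs to $\mathcal{D}(G_{c})$, which, because $P_{1}$ is invertible, consists exactly of even $H^{1}$-functions vanishing at $\pm b$. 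So the point value at $b$ is carried entirely by the $\mathcal{BD}(G)$-part, producing $e_{\partial}(u)=Q(\pi_{\mathcal{BD}(G)}\pi_{\mathrm{e}}\mathcal{H}u)$. The analogous decomposition for the odd part, together with the vanishing of $\mathcal{D}(D_{c})$-elements at $\pm b$, reduces $f_{\partial}(u)$ to $-\sqrt{2}P_{1}w(b)$ with $w=\sum_{i}c_{i}\sinh(\lambda_{i}^{-1}\cdot)b_{i}$ the $\mathcal{BD}(D)$-component of $\pi_{\mathrm{o}}\mathcal{H}u$. A direct computation shows that $\stackrel{\bullet}{D}$ sends $\sinh(\lambda_{i}^{-1}\cdot)b_{i}$ to $\cosh(\lambda_{i}^{-1}\cdot)b_{i}$ (the factor $\lambda_{i}^{-1}$ from differentiation cancels the $P_{1}$-eigenvalue), and applying $SQ$ and using $\sinh=\cosh\cdot\tanh$ yields exactly $\sqrt{2}P_{1}w(b)$, giving the claimed formula after negation.

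The main technical subtlety is keeping the bookkeeping straight between abstract elements of $\mathcal{BD}(G),\mathcal{BD}(D)$ and their concrete realizations as $H^{1}$-functions on $[-b,b]$, where pointwise evaluation at $\pm b$ is legitimate, together with the correct weighted graph norm on $H^{1}(|G|+\i)$ when verifying isometry of $\sqrt{S}Q$; once these identifications are fixed, each step collapses to a one-line calculation on the explicit hyperbolic basis.
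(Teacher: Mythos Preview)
Your proof is correct and follows essentially the same route as the paper: both arguments work in the explicit hyperbolic basis of $\mathcal{BD}(G)$ and $\mathcal{BD}(D)$, verify the isometry of $\sqrt{S}Q$ by a direct norm computation (the paper via integration by parts, you via the identity $\cosh^{2}+\sinh^{2}=\cosh(2\cdot)$, which amounts to the same integral), and derive the formulas for $e_{\partial}$ and $f_{\partial}$ by observing that the $\mathcal{D}(G_{c})$- and $\mathcal{D}(D_{c})$-components vanish at $\pm b$ so that only the boundary-data parts contribute to the point values.
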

\begin{proof}
The fact that $S$ is selfadjoint and strictly positive definite holds,
since 
\[
S=U^{\ast}\left(\begin{array}{cccc}
\lambda_{1}\tanh(\lambda_{1}^{-1}b) & 0 & \cdots & 0\\
0 & \ddots &  & \vdots\\
\vdots &  & \ddots & 0\\
0 & \cdots & 0 & \lambda_{n}\tanh(\lambda_{n}^{-1}b)
\end{array}\right)U,
\]
where $U$ is the unitary matrix defined via $Ue_{i}=b_{i}$ for each
$i\in\{1,\ldots,n\}.$ Moreover, for $v\in\mathcal{BD}(G)$ we find
a representation $v=\sum_{i=1}^{n}c_{i}\cosh(\lambda_{i}^{-1}\cdot)b_{i}$
for suitable constants $c_{1},\ldots,c_{n}\in\mathbb{C}$. Thus, using
$\stackrel{\bullet}{G}v=\sum_{i=1}^{n}c_{i}\sinh(\lambda_{i}^{-1}\cdot)b_{i}$
and integration by parts we obtain 
\begin{align*}
|v|_{\mathcal{BD}(G)}^{2} & =\sum_{i=1}^{n}|c_{i}|^{2}\left(\intop_{-b}^{b}\sinh(\lambda_{i}^{-1}x)^{2}\mbox{ d}x+\intop_{-b}^{b}\cosh(\lambda_{i}^{-1}x)^{2}\mbox{ d}x\right)\\
 & =2\sum_{i=1}^{n}|c_{i}|^{2}\lambda_{i}\sinh(\lambda_{i}^{-1}b)\cosh(\lambda_{i}^{-1}b)\\
 & =2\langle\sum_{i=1}^{n}c_{i}\lambda_{i}\sinh(\lambda_{i}^{-1}b)b_{i}|\sum_{i=1}^{n}c_{i}\cosh(\lambda_{i}^{-1}b)b_{i}\rangle_{\mathbb{C}^{n}}\\
 & =\langle SQv|Qv\rangle_{\mathbb{C}^{n}}\\
 & =|\sqrt{S}Qv|_{\mathbb{C}^{n}}^{2}.
\end{align*}
Finally, for $\mathcal{H}u\in H^{1}([-b,b];\mathbb{C}^{n})$ we compute
\begin{align*}
e_{\partial}(u) & =\frac{1}{\sqrt{2}}\left(\mathcal{H}u(b)+\mathcal{H}u(-b)\right)\\
 & =\sqrt{2}\left(\pi_{\mathrm{e}}\mathcal{H}u\right)(b)\\
 & =\sqrt{2}\left(\pi_{\mathcal{BD}(G)}\pi_{\mathrm{e}}\mathcal{H}u\right)(b)\\
 & =Q\pi_{\mathcal{BD}(G)}\pi_{\mathrm{e}}\mathcal{H}u,
\end{align*}
as well as 
\begin{align*}
f_{\partial}(u) & =\frac{1}{\sqrt{2}}\left(-P_{1}\mathcal{H}u(b)+P_{1}\mathcal{H}u(-b)\right)\\
 & =-\sqrt{2}P_{1}\left(\pi_{\mathrm{o}}\mathcal{H}u\right)(b)\\
 & =-\sqrt{2}P_{1}\left(\pi_{\mathcal{BD}(D)}\pi_{\mathrm{o}}\mathcal{H}u\right)(b).
\end{align*}
Using that there exist $c_{1},\ldots c_{n}\in\mathbb{C}$ such that
$\pi_{\mathcal{BD}(D)}\pi_{\mathrm{o}}\mathcal{H}u=\sum_{i=1}^{n}c_{i}\sinh(\lambda_{i}^{-1}\cdot)b_{i}$
we get that 
\[
f_{\partial}(u)=-\sqrt{2}\sum_{i=1}^{n}c_{i}\lambda_{i}\sinh(\lambda_{i}^{-1}b)b_{i}.
\]
On the other hand we have 
\[
\stackrel{\bullet}{D}\pi_{\mathcal{BD}(D)}\pi_{\mathrm{o}}\mathcal{H}u=\sum_{i=1}^{n}c_{i}\cosh(\lambda_{i}^{-1}\cdot)b_{i}
\]
from which we derive that 
\[
f_{\partial}(u)=-SQ\left(\stackrel{\bullet}{D}\pi_{\mathcal{BD}(D)}\pi_{\mathrm{o}}\mathcal{H}u\right).\tag*{\qedhere}
\]

\end{proof}
We now recall a part of \cite[Theorem 7.2.3]{Jacob_Zwart2012} and
provide an alternative proof within our framework.
\begin{thm}
\label{thm:lin_port}Let $A\subseteq P_{1}\partial\mathcal{H}+P_{0}\mathcal{H}$
a linear restriction and let $G,G_{c},D,D_{c}$ as above. If there
exists a matrix $V\in\mathbb{C}^{n\times n}$ such that $V^{\ast}V\leq1$
and 
\[
\mathcal{D}(A)=\left\{ u\in L_{2}([-b,b];\mathbb{C}^{n})\,\left|\,\mathcal{H}u\in H^{1}([-b,b];\mathbb{C}^{n}),\:\left(\begin{array}{cc}
1+V & \;1-V\end{array}\right)\left(\begin{array}{c}
f_{\partial}(u)\\
e_{\partial}(u)
\end{array}\right)=0\right.\right\} ,
\]
then $A$ defines a maximal monotone operator with respect to the
weighted inner product
\[
\langle u|v\rangle_{\mathcal{H}}\coloneqq\langle\mathcal{H}u|v\rangle_{L_{2}([-b,b];\mathbb{C}^{n})}\quad(u,v\in L_{2}([-b,b];\mathbb{C}^{n})).
\]
\end{thm}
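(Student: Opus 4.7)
The plan is to reduce the claim to \prettyref{thm:port_ham} by identifying a maximal monotone relation $h \subseteq \mathcal{BD}(G)\oplus\mathcal{BD}(G)$ that encodes the stated boundary condition. First, I would use \prettyref{lem:flow and effort} to rewrite the boundary condition intrinsically: setting $x := \pi_{\mathcal{BD}(G)}\pi_{\mathrm{e}}\mathcal{H}u$ and $y := \stackrel{\bullet}{D}\pi_{\mathcal{BD}(D)}\pi_{\mathrm{o}}\mathcal{H}u$, we have $e_{\partial}(u) = Qx$ and $f_{\partial}(u) = -SQy$, so the constraint $(1+V)f_{\partial}(u) + (1-V)e_{\partial}(u) = 0$ is equivalent to $(1-V)Qx = (1+V)SQy$. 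This suggests taking
\[
h := \left\{(x,y) \in \mathcal{BD}(G)\oplus\mathcal{BD}(G)\,\left|\,(1-V)Qx = (1+V)SQy\right.\right\},
\]
so that the domain of $A$ in the statement coincides with the one produced by \prettyref{thm:port_ham} applied to this $h$.

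It then remains to show $h$ is maximal monotone. For monotonicity, since $h$ is linear I only need $\Re\langle x|y\rangle_{\mathcal{BD}(G)} \geq 0$ for $(x,y) \in h$; using that $\sqrt{S}Q$ is unitary and $S$ is selfadjoint, this reduces to showing $\Re\langle Qx|SQy\rangle_{\mathbb{C}^{n}} \geq 0$. Writing $a := Qx$, $b := SQy$ and passing to the coordinates $p := a+b$, $q := a-b$, the defining condition $(1-V)a = (1+V)b$ becomes exactly $q = Vp$, whereupon an elementary computation yields $\Re\langle a|b\rangle = \tfrac14(|p|^{2}-|q|^{2}) = \tfrac14(|p|^{2}-|Vp|^{2}) \geq 0$ by the hypothesis $V^{\ast}V \leq 1$.

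For maximal monotonicity I would invoke Minty's Theorem (\prettyref{thm:Minty}): given $f \in \mathcal{BD}(G)$, find $(x,y) \in h$ with $x+y=f$. In the coordinates $a,b$ the equation $x+y=f$ reads $Sa+b = SQf$, and coupling this with the general parametrisation $a=(1+V)p/2$, $b=(1-V)p/2$ of solutions of $(1-V)a=(1+V)b$ produces the single linear equation $Mp = 2SQf$ with $M := (1+S)+(S-1)V$. The whole argument hinges on injectivity of $M$: if $Mp=0$ then $|(1+S)p|^{2} = |(1-S)Vp|^{2} \leq |(1-S)p|^{2}$ since $\|V\|\leq 1$, while $(1+S)^{2}-(1-S)^{2} = 4S$ is strictly positive definite, forcing $p=0$. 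Hence $M$ is invertible, Minty's criterion holds, $h$ is maximal monotone, and \prettyref{thm:port_ham} concludes the proof. The main obstacle is this algebraic invertibility step, which crucially uses both $V^{\ast}V \leq 1$ and the strict positive definiteness of $S$ from \prettyref{lem:flow and effort}.
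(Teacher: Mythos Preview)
Your overall strategy---reducing to \prettyref{thm:port_ham} via \prettyref{lem:flow and effort} and then checking that the resulting linear relation $h$ is maximal monotone---is exactly the paper's, and your monotonicity computation via the substitution $p=a+b$, $q=a-b$ (so that the constraint becomes $q=Vp$) is a clean variant of the paper's parametrisation.

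The genuine gap is in the injectivity argument for $M=(1+S)+(S-1)V$. From $Mp=0$ you correctly obtain $(1+S)p=(1-S)Vp$, but the step $|(1-S)Vp|\leq|(1-S)p|$ does \emph{not} follow from $\|V\|\leq1$: that inequality would hold if $V$ commuted with $1-S$, but in general it fails. For instance, with $S=\operatorname{diag}(4,\tfrac14)$, $V$ the coordinate swap (unitary), and $p=e_{2}$, one has $|(1-S)Vp|=3>\tfrac34=|(1-S)p|$. So the chain of inequalities leading to $4\langle Sp|p\rangle\leq0$ is broken.

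The fix is minor. One option is to rewrite $Mp=0$ as $S(1+V)p=-(1-V)p$ and pair with $(1+V)p$: then
\[
0\leq\langle S(1+V)p\,|\,(1+V)p\rangle_{\mathbb{C}^{n}}=-\Re\langle(1-V)p\,|\,(1+V)p\rangle_{\mathbb{C}^{n}}=-\langle(1-V^{\ast}V)p\,|\,p\rangle_{\mathbb{C}^{n}}\leq0,
\]
forcing $(1+V)p=0$ (by strict positivity of $S$), hence $(1-V)p=-S(1+V)p=0$, hence $p=0$; this is the paper's route. Alternatively, stay with your form and use the Cayley transform: $(1+S)p=(1-S)Vp$ gives $p=(1+S)^{-1}(1-S)Vp$, and since $S$ is selfadjoint with strictly positive spectrum one has $\|(1+S)^{-1}(1-S)\|<1$, whence $|p|<|Vp|\leq|p|$ unless $p=0$. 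Either repair completes your argument.
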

\begin{proof}
According to \prettyref{lem:flow and effort} we may rewrite the domain
of $A$ as
\begin{multline*}
\mathcal{D}(A)=\bigg\{ u\in L_{2}([-b,b];\mathbb{C}^{n})\,\bigg|\,\mathcal{H}u\in H^{1}([-b,b];\mathbb{C}^{n}),\\
\:\left(\begin{array}{cc}
1+V & \;1-V\end{array}\right)\left(\begin{array}{cc}
0 & -S\\
1 & 0
\end{array}\right)\left(\begin{array}{c}
Q\pi_{\mathcal{BD}(G)}\pi_{\mathrm{e}}\mathcal{H}u\\
Q\stackrel{\bullet}{D}\pi_{\mathcal{BD}(D)}\pi_{\mathrm{o}}\mathcal{H}u
\end{array}\right)=0\bigg\}.
\end{multline*}
By \prettyref{thm:port_ham} it suffices to check whether 
\begin{equation}
h\coloneqq\left\{ (x,y)\in\mathcal{BD}(G)\oplus\mathcal{BD}(G)\,\left|\,\left(\begin{array}{cc}
1+V & \;1-V\end{array}\right)\left(\begin{array}{cc}
0 & -S\\
1 & 0
\end{array}\right)\left(\begin{array}{c}
Qx\\
Qy
\end{array}\right)=0\right.\right\} \label{eq:h_kernel}
\end{equation}
defines a maximal monotone relation. According to \cite[Lemma 7.3.2]{Jacob_Zwart2012}
the kernel of $\left(\begin{array}{cc}
1+V & \;1-V\end{array}\right)$ equals the set 
\[
\left\{ \left(\left(1-V\right)\ell,(-1-V)\ell\right)\,|\,\ell\in\mathbb{C}^{n}\right\} .
\]
Let $(x,y)\in h.$ Then there exists $\ell\in\mathbb{C}^{n}$ such
that $-SQy=\ell-V\ell$ and $Qx=-\ell-V\ell.$ The latter implies,
using \prettyref{lem:flow and effort} and $V^{\ast}V\leq1$ 
\begin{align*}
\Re\langle x|y\rangle_{\mathcal{BD}(G)} & =\Re\langle Q^{-1}\left(-\ell-V\ell\right)|\left(-SQ\right)^{-1}\left(\ell-V\ell\right)\rangle_{\mathcal{BD}(G)}\\
 & =-\Re\langle Q^{-1}\left(-\ell-V\ell\right)|\left(\sqrt{S}Q\right)^{-1}\sqrt{S^{-1}}\left(\ell-V\ell\right)\rangle_{\mathcal{BD}(G)}\\
 & =-\Re\langle\sqrt{S}\left(-\ell-V\ell\right)|\sqrt{S^{-1}}\left(\ell-V\ell\right)\rangle_{\mathbb{C}^{n}}\\
 & =-\Re\langle\left(-\ell-V\ell\right)|\left(\ell-V\ell\right)\rangle_{\mathbb{C}^{n}}\\
 & =-\Re\langle\left(V^{\ast}V-1\right)\ell|\ell\rangle_{\mathbb{C}^{n}}\geq0.
\end{align*}
Since $h$ is linear, this gives the monotonicity of $h$. For showing
the maximality of $h$ we take $f\in\mathcal{BD}(G).$ We have to
find an element $\ell\in\mathbb{C}^{n}$ such that 
\[
Q^{-1}(-\ell-V\ell)-Q^{-1}S^{-1}\left(\ell-V\ell\right)=f,
\]
which is equivalent to the fact that 
\[
\left(-S(1+V)-(1-V)\right)\ell=SQf.
\]
To show the existence of such a vector $\ell\in\mathbb{C}^{n}$ it
suffices to prove that 
\[
-S(1+V)-(1-V)
\]
is injective. Take $\ell\in\mathbb{C}^{n}$ such that $-S(1+V)\ell-(1-V)\ell=0$.
Then we estimate
\begin{align*}
0 & \leq\langle S(1+V)\ell|(1+V)\ell\rangle_{\mathbb{C}^{n}}\\
 & =-\langle(1-V)\ell|(1+V)\ell\rangle_{\mathbb{C}^{n}}\\
 & =-\langle(1-V^{\ast}V)\ell|\ell\rangle_{\mathbb{C}^{n}}\leq0
\end{align*}
and hence, $(1+V)\ell=0.$ Therefore $(1-V)\ell=-S(1+V)\ell=0$ and
hence, we conclude that $\ell=\frac{1}{2}\left((1+V)\ell+(1-V)\ell\right)=0.$ 
\end{proof}

\subsection{Frictional boundary conditions}

In the context of contact problems in visco-elasticity we find so-called
frictional boundary conditions, which should hold on the part of the
boundary where the contact occurs. Examples for such boundary conditions
can be found for instance in \cite[Section 5]{Migorski_2009}, \cite[p. 171 ff.]{Sofonea2009}
and \cite[134 ff.]{duvaut1976inequalities}.

We follow the model presented in \cite{Migorski_2009}, which was
already discussed by the author in \cite{Trostorff2012_nonlin_bd}
for the case where the frictional boundary condition holds on the
whole boundary. The equations read as follows 
\begin{align*}
\partial_{0}^{2}\rho u-\Div T & =f,\\
T & =C\Grad u+D\Grad\partial_{0}u,
\end{align*}
where $\partial_{0}$ stands for the temporal derivative, $\Grad$
denotes the symmetrized gradient and $\Div$ the row-wise divergence
of a matrix with respect to the spatial variables (the precise definition
will be given below). Following \cite{Trostorff2012_nonlin_bd}, the
latter system can be reformulated as an equation of the form 
\[
\left(\partial_{0}\mathcal{M}+\left(\begin{array}{cc}
0 & \Div\\
\Grad & 0
\end{array}\right)\right)\left(\begin{array}{c}
\partial_{0}u\\
-T
\end{array}\right)=\left(\begin{array}{c}
f\\
0
\end{array}\right)
\]
for a suitable operator $\mathcal{M}$. Throughout, we assume that
$\Omega\subseteq\mathbb{R}^{3}$ is a bounded Lipschitz-domain. Let
$\Gamma_{1},\Gamma_{2},\Gamma_{3}\subseteq\partial\Omega$ be three
pairwise disjoint, measurable sets such that $\Gamma_{1}\cup\Gamma_{2}\cup\Gamma_{3}=\partial\Omega.$
Following \cite{Migorski_2009} we impose the following boundary conditions
\begin{align}
\partial_{0}u & =f_{1}\mbox{ on }\Gamma_{1},\label{eq:Dirichlet}\\
-T\cdot N & =f_{2}\mbox{ on }\Gamma_{2},\label{eq:Neumann}\\
(\partial_{0}u,-T\cdot N) & \in g\mbox{ on }\Gamma_{3},\label{eq:frictional_bd_cd}
\end{align}
for given functions $f_{1}\in L_{2}(\Gamma_{1})^{3},f_{2}\in L_{2}(\Gamma_{2})^{3}$
and a binary relation $g\subseteq L_{2}(\Gamma_{3})^{3}\oplus L_{2}(\Gamma_{3})^{3}.$
Since we are interested in maximal monotone realizations of the block
operator matrix $\left(\begin{array}{cc}
0 & \Div\\
\Grad & 0
\end{array}\right)$, we restrict ourselves to maximal monotone relations $g$ (in \cite{Migorski_2009}
also a class of non-monotone relations was discussed).

We define the operators involved:
\begin{defn*}
We denote by $L_{2}(\Omega)^{3\times3}$ the space of $3\times3$-matrix-valued
$L_{2}(\Omega)$ functions, equipped with the inner product 
\[
\langle T|S\rangle_{L_{2}(\Omega)^{3\times3}}\coloneqq\intop_{\Omega}\trace\left(T(t)^{\ast}S(t)\right)\mbox{ d}t.
\]
Moreover, we denote by $L_{2,\mathrm{sym}}(\Omega)^{3\times3}$ the
closed subspace of symmetric-matrix-valued functions. We define the
operator $\Grad_{c}$ as the closure of 
\begin{align*}
C_{c}^{\infty}(\Omega)^{3}\subseteq L_{2}(\Omega)^{3} & \to L_{2,\mathrm{sym}}(\Omega)^{3\times3}\\
(\phi_{i})_{i\in\{1,2,3\}} & \mapsto\left(\frac{\partial_{i}\phi_{j}+\partial_{j}\phi_{i}}{2}\right)_{i,j\in\{1,2,3\}}
\end{align*}
and the operator $\Div_{c}$ as the closure of 
\begin{align*}
C_{c}^{\infty}(\Omega)^{3\times3}\cap L_{2,\mathrm{sym}}(\Omega)^{3\times3}\subseteq L_{2,\mathrm{sym}}(\Omega)^{3\times3} & \to L_{2}(\Omega)^{3}\\
\left(T_{ij}\right)_{i,j\in\{1,2,3\}} & \mapsto\left(\sum_{j=1}^{3}\partial_{j}T_{ij}\right)_{i\in\{1,2,3\}}.
\end{align*}
Furthermore, we define the operator $\Grad\coloneqq-\left(\Div_{c}\right)^{\ast}$
and $\Div\coloneqq-\left(\Grad_{c}\right)^{\ast}.$ 
\end{defn*}
The operator matrix $\left(\begin{array}{cc}
0 & \Div\\
\Grad & 0
\end{array}\right)$ is of the form discussed in Section 3, and thus, it suffices to show
that the boundary conditions \prettyref{eq:Dirichlet}-\prettyref{eq:frictional_bd_cd}
can be realized by a maximal monotone relation on $\mathcal{BD}(\Grad).$
This is the aim of the rest of this subsection. We first note that
Korn's inequality holds for Lipschitz-domains (see e.g. \cite{Ciarlet2005}),
which states that $H^{1}(|\Grad|+\i)$ and $H^{1}(|\grad|+\i)^{3}$
are isomorphic via the identity-mapping. Following the reasoning of
Section 4, we obtain that $\mathcal{BD}(\Grad)$ is isomorphic to
$H^{\frac{1}{2}}(\partial\Omega)^{3}$ and consequently, there exists
a continuous injection $\kappa:\mathcal{BD}(\Grad)\to L_{2}(\partial\Omega)^{3}$
with dense range. In this sense, the boundary condition \prettyref{eq:Dirichlet}
can be formulated as 
\begin{equation}
\pi_{L_{2}(\Gamma_{1})^{3}}\kappa\pi_{\mathcal{BD}(\Grad)}\partial_{0}u=f_{1},\label{eq:Dirichlet_2}
\end{equation}
which in particular implies $f_{1}\in\mathcal{R}\left(\pi_{L_{2}(\Gamma_{1})^{3}}\kappa\right).$
We now define a maximal monotone relation on $\left(L_{2}(\Gamma_{1})^{3}\right)^{\bot}=L_{2}(\Gamma_{2})^{3}\oplus L_{2}(\Gamma_{3})^{3}$,
which will represent the boundary conditions \prettyref{eq:Neumann}
and \prettyref{eq:frictional_bd_cd}.
\begin{defn*}
We denote by $\pi_{L_{2}(\Gamma_{2})^{3}}:\left(L_{2}(\Gamma_{1})^{3}\right)^{\bot}\to L_{2}(\Gamma_{2})^{3}$
and by $\pi_{L_{2}(\Gamma_{3})^{3}}:\left(L_{2}(\Gamma_{1})^{3}\right)^{\bot}\to L_{2}(\Gamma_{3})^{3}$
the orthogonal projections onto $L_{2}(\Gamma_{2})^{3}$ and $L_{2}(\Gamma_{3})^{3}$,
respectively. We define the relation $\tilde{g}\subseteq\left(L_{2}(\Gamma_{1})^{3}\right)^{\bot}\oplus\left(L_{2}(\Gamma_{1})^{3}\right)^{\bot}$
by 
\[
\tilde{g}\coloneqq\left\{ (x,y)\,|\,\pi_{L_{2}(\Gamma_{2})^{3}}y=f_{2},(\pi_{L_{2}(\Gamma_{3})^{3}}x,\pi_{L_{2}(\Gamma_{3})^{3}}y)\in g\right\} .
\]
\end{defn*}
\begin{lem}
\label{lem:rel_tilde_g}The relation $\tilde{g}$ is maximal monotone.
Moreover, if $g$ is bounded then so is $\tilde{g}.$\end{lem}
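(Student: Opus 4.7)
The plan is to recognize $\tilde{g}$ as a direct sum of two maximal monotone relations on the orthogonal splitting $(L_{2}(\Gamma_{1})^{3})^{\bot}=L_{2}(\Gamma_{2})^{3}\oplus L_{2}(\Gamma_{3})^{3}$ and then invoke \prettyref{prop:dircet_sum_rel}. Writing $x=(x_{1},x_{2})$ and $y=(y_{1},y_{2})$ with $x_{1},y_{1}\in L_{2}(\Gamma_{2})^{3}$ and $x_{2},y_{2}\in L_{2}(\Gamma_{3})^{3}$, the definition of $\tilde{g}$ becomes
\[
\tilde{g}=\bigl\{\,((x_{1},x_{2}),(y_{1},y_{2}))\,\big|\,(x_{1},y_{1})\in B_{0},\,(x_{2},y_{2})\in g\,\bigr\},
\]
where $B_{0}\coloneqq\{(u,f_{2})\,|\,u\in L_{2}(\Gamma_{2})^{3}\}\subseteq L_{2}(\Gamma_{2})^{3}\oplus L_{2}(\Gamma_{2})^{3}$ is the graph of the constant map $u\mapsto f_{2}$. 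Thus $\tilde{g}=B_{0}\oplus g$ in the sense of \prettyref{prop:dircet_sum_rel}.

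Next I would verify that $B_{0}$ is maximal monotone. Monotonicity is trivial since for any two elements of $B_{0}$ the second components agree, so $\Re\langle u_{1}-u_{2}|f_{2}-f_{2}\rangle=0$. For maximality one either observes directly that $B_{0}$ is the graph of a globally defined Lipschitz (in fact constant) mapping and therefore maximal monotone, or checks Minty's criterion: given $h\in L_{2}(\Gamma_{2})^{3}$, the pair $(h-f_{2},f_{2})$ lies in $B_{0}$ and satisfies $(h-f_{2})+f_{2}=h$, so $(1+B_{0})[L_{2}(\Gamma_{2})^{3}]=L_{2}(\Gamma_{2})^{3}$.

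Applying \prettyref{prop:dircet_sum_rel} to $B_{0}$ and $g$ (both maximal monotone, $g$ by assumption) then immediately gives that $\tilde{g}=B_{0}\oplus g$ is maximal monotone.

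Finally, for the boundedness claim, assume $g$ is bounded and let $M\subseteq L_{2}(\Gamma_{2})^{3}\oplus L_{2}(\Gamma_{3})^{3}$ be bounded. From the description above,
\[
\tilde{g}[M]\subseteq\{f_{2}\}\oplus g\bigl[\pi_{L_{2}(\Gamma_{3})^{3}}[M]\bigr].
\]
Since the projection $\pi_{L_{2}(\Gamma_{3})^{3}}$ is bounded, $\pi_{L_{2}(\Gamma_{3})^{3}}[M]$ is bounded, and by the boundedness of $g$ the set $g[\pi_{L_{2}(\Gamma_{3})^{3}}[M]]$ is bounded; adding the fixed element $f_{2}$ yields boundedness of $\tilde{g}[M]$. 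No step here is a real obstacle — the whole argument rests on the observation that the inhomogeneous Neumann condition can be decoupled as a trivial constant relation, so the only non-routine tool needed is the direct-sum proposition already established.
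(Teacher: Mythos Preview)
Your proof is correct and follows exactly the approach the paper takes: the paper's own proof simply says that maximal monotonicity follows from \prettyref{prop:dircet_sum_rel} and that boundedness is obvious, and you have spelled out precisely the decomposition $\tilde{g}=B_{0}\oplus g$ (with $B_{0}$ the constant Neumann relation) that makes this invocation work. The only difference is that you give the details the paper leaves implicit.
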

\begin{proof}
The maximal monotonicity follows by \prettyref{prop:dircet_sum_rel}
and the boundedness of $\tilde{g}$ in case of a bounded relation
$g$ is obvious. 
\end{proof}
In \cite{Migorski_2009} we find an assumption on $g$, which in particular
implies the boundedness of $g$. So, henceforth, we will assume that
$g$ is bounded. Moreover, we define the closed subspace $V$ of $\mathcal{BD}(\Grad)$
by 
\[
V\coloneqq\mathcal{N}\left(\pi_{L_{2}(\Gamma_{1})^{3}}\kappa\right),
\]
which consists of those elements in $\mathcal{BD}(\Grad),$ whose
trace is supported on $\Gamma_{2}\cup\Gamma_{3}$. 
\begin{lem}
\label{lem:maxmon_tilde_h}If $g$ is bounded, then $\tilde{h}\coloneqq\pi_{V}\kappa^{\ast}\tilde{g}\kappa\pi_{V}^{\ast}\subseteq V\oplus V$
is maximal monotone.\end{lem}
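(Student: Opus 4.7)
The strategy is to write $\tilde{h}=T^{\ast}\tilde{g}T$ for a suitable bounded linear operator $T$ and then invoke \prettyref{prop:sim_bd_rel}. I would first set $T\coloneqq\kappa\pi_{V}^{\ast}$. Since $V=\mathcal{N}(\pi_{L_{2}(\Gamma_{1})^{3}}\kappa)$, the image $Tv=\kappa\pi_{V}^{\ast}v$ lies in the closed subspace $H_{0}\coloneqq(L_{2}(\Gamma_{1})^{3})^{\bot}$ for every $v\in V$, and $T\colon V\to H_{0}$ is linear and bounded by continuity of $\kappa$ and of $\pi_{V}$. A short adjoint computation using
\[
\langle Tv|w\rangle_{H_{0}}=\langle\kappa\pi_{V}^{\ast}v|w\rangle_{L_{2}(\partial\Omega)^{3}}=\langle v|\pi_{V}\kappa^{\ast}w\rangle_{V}\qquad(v\in V,\,w\in H_{0})
\]
identifies $T^{\ast}$ with the restriction of $\pi_{V}\kappa^{\ast}$ to $H_{0}$, so that $T^{\ast}\tilde{g}T=\pi_{V}\kappa^{\ast}\tilde{g}\kappa\pi_{V}^{\ast}=\tilde{h}$.

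Next I would apply \prettyref{prop:sim_bd_rel} with $H_{1}=V$, $H_{0}$ as above, and $B=\tilde{g}$. The hypotheses to verify are that $\tilde{g}$ is maximal monotone and bounded on $H_{0}$, that $T$ is linear and bounded, and that the compatibility condition $T[V]\cap[H_{0}]\tilde{g}\neq\emptyset$ holds. The maximal monotonicity of $\tilde{g}$ together with its boundedness under the hypothesis that $g$ is bounded is exactly \prettyref{lem:rel_tilde_g}, and the linearity and boundedness of $T$ have already been noted.

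The only nontrivial step, and the place where the boundedness assumption is genuinely used, is the compatibility condition. The key observation is that a globally bounded maximal monotone relation on a Hilbert space is automatically defined everywhere: by the classical local boundedness theorem of Rockafellar for maximal monotone operators, global boundedness of $\tilde{g}$ forces every point of $H_{0}$ to lie in the interior of $\mathcal{D}(\tilde{g})$, so $\mathcal{D}(\tilde{g})=H_{0}$ and hence $[H_{0}]\tilde{g}=H_{0}$. Consequently $0=T\cdot 0\in T[V]\cap[H_{0}]\tilde{g}$, and \prettyref{prop:sim_bd_rel} yields the maximal monotonicity of $\tilde{h}$. I expect the only delicate point of the writeup to be this appeal to the local boundedness theorem (or an equivalent direct argument establishing $\mathcal{D}(\tilde{g})=H_{0}$), since every other step is a routine identification of operators with their adjoints.
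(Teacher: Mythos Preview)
Your proposal is correct and follows exactly the paper's route: the paper's proof consists of the single sentence ``This follows by \prettyref{prop:sim_bd_rel} and \prettyref{lem:rel_tilde_g},'' and you have simply unpacked the hypotheses of \prettyref{prop:sim_bd_rel}, including the compatibility condition $T[V]\cap[H_{0}]\tilde{g}\neq\emptyset$, which the paper leaves implicit. Your justification of that condition via Rockafellar's local boundedness theorem (yielding $\mathcal{D}(\tilde{g})=H_{0}$ from global boundedness) is a sound way to fill the gap.
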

\begin{proof}
This follows by \prettyref{prop:sim_bd_rel} and \prettyref{lem:rel_tilde_g}.
\end{proof}
We are now able to define the realization $A\subseteq\left(\begin{array}{cc}
0 & \Div\\
\Grad & 0
\end{array}\right)$ subject to the boundary conditions \prettyref{eq:Dirichlet}-\prettyref{eq:frictional_bd_cd}.
\begin{thm}
The nonlinear operator $A\subseteq\left(\begin{array}{cc}
0 & \Div\\
\Grad & 0
\end{array}\right)$ with 
\begin{multline*}
\mathcal{D}(A)\coloneqq\bigg\{(v,-T)\in\mathcal{D}(\Grad)\times\mathcal{D}(\Div)\,\bigg|\,\pi_{L_{2}(\Gamma_{1})^{3}}\kappa\pi_{\mathcal{BD}(\Grad)}v=f_{1},\\
\left.\,\left(\pi_{V}\pi_{\mathcal{BD}(\Grad)}v,\pi_{V}\stackrel{\bullet}{\Div}\pi_{\mathcal{BD}(\Div)}\left(-T\right)\right)\in\tilde{h}\right\} 
\end{multline*}
is maximal monotone.\end{thm}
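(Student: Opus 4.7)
The plan is to apply the characterization in \prettyref{thm:char_max_mon} by exhibiting a maximal monotone relation $h \subseteq \mathcal{BD}(\Grad)\oplus\mathcal{BD}(\Grad)$ whose induced domain via \prettyref{eq:D(A)} coincides with $\mathcal{D}(A)$. Reading off the two conditions that define $\mathcal{D}(A)$, the natural candidate is
\[
h \coloneqq \left\{(x,y)\in\mathcal{BD}(\Grad)\oplus\mathcal{BD}(\Grad)\,\left|\,\pi_{L_{2}(\Gamma_{1})^{3}}\kappa\,x=f_{1},\;(\pi_{V}x,\pi_{V}y)\in\tilde{h}\right.\right\}.
\]
With this choice the equivalence $(v,-T)\in\mathcal{D}(A)\iff(\pi_{\mathcal{BD}(\Grad)}v,\stackrel{\bullet}{\Div}\pi_{\mathcal{BD}(\Div)}(-T))\in h$ is immediate, so it remains to prove that $h$ is maximal monotone. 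Note that an implicit compatibility assumption $f_{1}\in\mathcal{R}(\pi_{L_{2}(\Gamma_{1})^{3}}\kappa)$ is needed (otherwise $h$ and $\mathcal{D}(A)$ are empty); this was already acknowledged in the discussion preceding the theorem.

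For monotonicity, let $(x_{1},y_{1}),(x_{2},y_{2})\in h$. The common Dirichlet datum forces $x_{1}-x_{2}\in V=\mathcal{N}(\pi_{L_{2}(\Gamma_{1})^{3}}\kappa)$. Since $x_{1}-x_{2}$ is then orthogonal to $V^{\perp}$ in $\mathcal{BD}(\Grad)$, only the $V$-component of $y_{1}-y_{2}$ contributes to the inner product, giving
\[
\Re\langle x_{1}-x_{2}\,|\,y_{1}-y_{2}\rangle_{\mathcal{BD}(\Grad)}=\Re\langle\pi_{V}(x_{1}-x_{2})\,|\,\pi_{V}(y_{1}-y_{2})\rangle_{V}\geq 0
\]
by the monotonicity of $\tilde{h}$ provided by \prettyref{lem:maxmon_tilde_h}.

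For maximality I would verify Minty's criterion. Given $f\in\mathcal{BD}(\Grad)$, fix any $x_{0}\in\mathcal{BD}(\Grad)$ with $\pi_{L_{2}(\Gamma_{1})^{3}}\kappa\,x_{0}=f_{1}$. Every $x$ satisfying the Dirichlet constraint has the form $x=x_{0}+v$ with $v\in V$, and the requirement $x+y=f$ forces $y=f-x_{0}-v$. Substituting into the second constraint and using $v\in V$, the problem reduces to finding $a\in V$ with $(a,\pi_{V}f-a)\in\tilde{h}$, where $a=\pi_{V}x_{0}+v$. Since $\tilde{h}$ is maximal monotone on $V$ by \prettyref{lem:maxmon_tilde_h}, Minty's theorem yields such an $a$, and setting $v\coloneqq a-\pi_{V}x_{0}$ produces the desired pair $(x,y)\in h$ with $x+y=f$. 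Thus $(1+h)[\mathcal{BD}(\Grad)]=\mathcal{BD}(\Grad)$, and \prettyref{thm:Minty} gives maximal monotonicity of $h$.

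The main obstacle is conceptual rather than technical: correctly separating the affine piece of the boundary relation (the inhomogeneous Dirichlet datum, analogous to the affine translation trick used for the Robin condition in Section~4) from the genuinely nonlinear piece $\tilde{h}$ on the complementary subspace $V$, so that Minty can be applied to $\tilde{h}$ alone. Once this parameterization $x=x_{0}+v$ with $v\in V$ is in place, the maximal monotonicity of $A$ follows by combining \prettyref{thm:char_max_mon}, \prettyref{lem:maxmon_tilde_h}, and \prettyref{prop:sim_bd_rel} (invoked through the boundedness assumption on $g$).
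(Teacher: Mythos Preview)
Your proof is correct and follows essentially the same route as the paper: you exhibit the same boundary relation $h$ and appeal to \prettyref{thm:char_max_mon}. The only difference is cosmetic --- the paper first rewrites the Dirichlet constraint as $\pi_{V^{\bot}}x=\tilde{f}_{1}$ (using bijectivity of $\pi_{L_{2}(\Gamma_{1})^{3}}\kappa\pi_{V^{\bot}}^{\ast}$ onto the range), which makes $h$ manifestly a direct sum of $\{\tilde{f}_{1}\}\times V^{\bot}$ and $\tilde{h}$ and allows a one-line appeal to \prettyref{prop:dircet_sum_rel}, whereas you verify Minty's criterion for $h$ by hand via the parametrization $x=x_{0}+v$; both arguments amount to the same computation.
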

\begin{proof}
We first note that 
\[
\pi_{L_{2}(\Gamma_{1})^{3}}\kappa\pi_{V^{\bot}}^{\ast}:V^{\bot}\to\mathcal{R}(\kappa\pi_{L_{2}(\Gamma_{1})^{3}})
\]
is bijective according to the definition of $V$. Hence, $\pi_{L_{2}(\Gamma_{1})^{3}}\kappa\pi_{\mathcal{BD}(\Grad)}v=f_{1}$
is equivalent to 
\[
\pi_{V^{\bot}}\pi_{\mathcal{BD}(\Grad)}v=\tilde{f}_{1}\coloneqq\left(\pi_{L_{2}(\Gamma_{1})^{3}}\kappa\pi_{V^{\bot}}^{\ast}\right)^{-1}(f_{1}).
\]
Thus, defining 
\begin{equation}
h\coloneqq\left\{ (x,y)\in\mathcal{BD}(\Grad)\oplus\mathcal{BD}(\Grad)\,|\,\pi_{V^{\bot}}x=\tilde{f}_{1},(\pi_{V}x,\pi_{V}y)\in\tilde{h}\right\} ,\label{eq:h_frictional}
\end{equation}
we can write the domain of $A$ as 
\[
\mathcal{D}(A)=\left\{ (v,-T)\in\mathcal{D}(\Grad)\times\mathcal{D}(\Div)\,\left|\,\left(\pi_{\mathcal{BD}(\Grad)}v,\stackrel{\bullet}{\Div}\pi_{\mathcal{BD}(\Div)}\left(-T\right)\right)\in h\right.\right\} .
\]
Hence, $A$ is maximal monotone if $h$ is maximal monotone according
to \prettyref{thm:char_max_mon}. The latter follows by the maximal
monotonicity of $\tilde{h}$ (see \prettyref{lem:maxmon_tilde_h})
and \prettyref{prop:dircet_sum_rel}.
\end{proof}
In the remaining part of this subsection we discuss, in which sense
elements in the domain of $A$ satisfy the boundary conditions \prettyref{eq:Dirichlet}-\prettyref{eq:frictional_bd_cd}.
Following the rationale of Section 4, \prettyref{eq:Neumann} should
hold in the sense that 
\[
\langle(\Div-\Div_{c})\pi_{\mathcal{BD}(\Div)}^{\ast}\pi_{\mathcal{BD}(\Div)}(-T)|\pi_{\mathcal{BD}(\Grad)}^{\ast}u\rangle_{H^{-1}(|\Div|+\i),H^{1}(|\Grad|+\i)}=\langle f_{2}|\kappa u\rangle_{L_{2}(\partial\Omega)^{3}}
\]
for each $u\in\mathcal{BD}(\Grad)$ with $\kappa u\in L_{2}(\Gamma_{2})^{3}$
or, in other words, for each $u\in\mathcal{N}\left(\pi_{\left(L_{2}(\Gamma_{2})^{3}\right)^{\bot}}\kappa\right)\eqqcolon V_{2}.$
The latter gives 
\begin{align*}
 & \left\langle \left.\pi_{V_{2}}^{\ast}\pi_{V_{2}}\stackrel{\bullet}{\Div}\pi_{\mathcal{BD}(\Div)}(-T)\right|u\right\rangle _{\mathcal{BD}(\Grad)}\\
 & =\left\langle \left.\stackrel{\bullet}{\Div}\pi_{\mathcal{BD}(\Div)}(-T)\right|\pi_{V_{2}}^{\ast}\pi_{V_{2}}u\right\rangle _{\mathcal{BD}(\Grad)}\\
 & =\langle(\Div-\Div_{c})\pi_{\mathcal{BD}(\Div)}^{\ast}\pi_{\mathcal{BD}(\Div)}(-T)|\pi_{\mathcal{BD}(\Grad)}^{\ast}\pi_{V_{2}}^{\ast}\pi_{V_{2}}u\rangle_{H^{-1}(|\Div|+\i),H^{1}(|\Grad|+\i)}\\
 & =\langle f_{2}|\kappa\pi_{V_{2}}^{\ast}\pi_{V_{2}}u\rangle_{L_{2}(\partial\Omega)^{3}}\\
 & =\langle\pi_{V_{2}}^{\ast}\pi_{V_{2}}\kappa^{\ast}f_{2}|u\rangle_{\mathcal{BD}(\Grad)}
\end{align*}
for each $u\in\mathcal{BD}(\Grad).$ Thus, the appropriate formulation
for \prettyref{eq:Neumann} in our setting is 
\begin{equation}
\pi_{V_{2}}\stackrel{\bullet}{\Div}\pi_{\mathcal{BD}(\Div)}(-T)=\pi_{V_{2}}\kappa^{\ast}f_{2}.\label{eq:Neumann_2}
\end{equation}

Analogously \prettyref{eq:frictional_bd_cd} should hold in the sense
that there exists a function $f_{3}\in L_{2}(\Gamma_{3})^{3}$ such
that
\begin{equation}
(\pi_{L_{2}(\Gamma_{3})^{3}}\kappa\pi_{\mathcal{BD}(\Grad)}\partial_{0}u,f_{3})\in g\mbox{ and }\pi_{V_{3}}\stackrel{\bullet}{\Div}\pi_{\mathcal{BD}(\Div)}(-T)=\pi_{V_{3}}\kappa^{\ast}f_{3},\label{eq:frictional_2}
\end{equation}
where $V_{3}\coloneqq\mathcal{N}\left(\pi_{\left(L_{2}(\Gamma_{3})^{3}\right)^{\bot}}\kappa\right)$. 

Let now $\left(\pi_{\mathcal{BD}(\Grad)}\partial_{0}u,\stackrel{\bullet}{\Div}\pi_{\mathcal{BD}(\Div)}(-T)\right)\in h,$
where $h$ is given by \prettyref{eq:h_frictional}. Then 
\[
\pi_{V^{\bot}}\pi_{\mathcal{BD}(\Grad)}\partial_{0}u=\tilde{f}_{1},
\]
 which yields \prettyref{eq:Dirichlet_2}. Moreover, $\left(\pi_{V}\pi_{\mathcal{BD}(\Grad)}\partial_{0}u,\pi_{V}\stackrel{\bullet}{\Div}\pi_{\mathcal{BD}(\Div)}(-T)\right)\in\tilde{h},$
which implies the existence of an element $w\in\left(L_{2}(\Gamma_{1})^{3}\right)^{\bot}$
such that 
\[
(\kappa\pi_{V}^{\ast}\pi_{V}\pi_{\mathcal{BD}(\Grad)}\partial_{0}u,w)\in\tilde{g}
\]
and 
\[
\pi_{V}\stackrel{\bullet}{\Div}\pi_{\mathcal{BD}(\Div)}(-T)=\pi_{V}\kappa^{\ast}w.
\]
According to the definition of $\tilde{g},$ we get that 
\[
\pi_{L_{2}(\Gamma_{2})^{3}}w=f_{2}\mbox{ and }(\pi_{L_{2}(\Gamma_{3})^{3}}\kappa\pi_{V}^{\ast}\pi_{V}\pi_{\mathcal{BD}(\Grad)}\partial_{0}u,\pi_{L_{2}(\Gamma_{3})^{3}}w)\in g.
\]
Hence, we get 
\begin{align*}
\pi_{V_{2}}\stackrel{\bullet}{\Div}\pi_{\mathcal{BD}(\Div)}(-T) & =\pi_{V_{2}}\pi_{V}^{\ast}\pi_{V}\stackrel{\bullet}{\Div}\pi_{\mathcal{BD}(\Div)}(-T)\\
 & =\pi_{V_{2}}\pi_{V}^{\ast}\pi_{V}\kappa^{\ast}w\\
 & =\pi_{V_{2}}\kappa^{\ast}w\\
 & =\pi_{V_{2}}\kappa^{\ast}\pi_{L_{2}(\Gamma_{2})^{3}}w\\
 & =\pi_{V_{2}}\kappa^{\ast}f_{2},
\end{align*}
where we have used that $\pi_{V_{2}}=\pi_{V_{2}}\pi_{V}^{\ast}\pi_{V},$
since $V_{2}\subseteq V,$ and $\pi_{V_{2}}\kappa^{\ast}\pi_{\left(L_{2}(\Gamma_{2})^{3}\right)^{\bot}}=0$
by the definition of $V_{2}.$ This shows \prettyref{eq:Neumann_2}.
Analogously, one obtains 
\[
\pi_{V_{3}}\stackrel{\bullet}{\Div}\pi_{\mathcal{BD}(\Div)}(-T)=\pi_{V_{3}}\kappa^{\ast}\pi_{L_{2}(\Gamma_{3})^{3}}w,
\]
which yields \prettyref{eq:frictional_2} for $f_{3}\coloneqq\pi_{L_{2}(\Gamma_{3})^{3}}w$.

\section*{Acknowledgement}

The author thanks Birgit Jacob, who has initiated this study by a
question on a conference and Marcus Waurick for careful reading.


\begin{thebibliography}{10}

\bibitem{Arlinskii1995}
Y.~{Arlinskij}.
\newblock {On proper accretive extensions of positive linear relations.}
\newblock {\em {Ukr. Mat. Zh.}}, 47(6):723--730, 1995.

\bibitem{Behrndt2009}
J.~{Behrndt}, S.~{Hassi}, and H.~{De Snoo}.
\newblock {Boundary relations, unitary colligations, and functional models.}
\newblock {\em {Complex Anal. Oper. Theory}}, 3(1):57--98, 2009.

\bibitem{Brezis}
H.~Brezis.
\newblock {\em Operateurs maximaux monotones et semi-groupes de contractions
  dans les espaces de Hilbert}.
\newblock Universite Paris VI et CNRS, 1971.

\bibitem{Ciarlet2005}
P.~G. Ciarlet and P.~{jun. Ciarlet}.
\newblock {Another approach to linearized elasticity and a new proof of Korn's
  inequality.}
\newblock {\em Math. Models Methods Appl. Sci.}, 15(2):259--271, 2005.

\bibitem{Coddington1978}
E.~A. {Coddington} and H.~S. {de Snoo}.
\newblock {Positive selfadjoint extensions of positive symmetric subspaces.}
\newblock {\em {Math. Z.}}, 159:203--214, 1978.

\bibitem{Derkach2006}
V.~{Derkach}, S.~{Hassi}, M.~{Malamud}, and H.~{De Snoo}.
\newblock {Boundary relations and their Weyl families.}
\newblock {\em {Trans. Am. Math. Soc.}}, 358(12):5351--5400, 2006.

\bibitem{Derkach2009}
V.~{Derkach}, S.~{Hassi}, M.~{Malamud}, and H.~{De Snoo}.
\newblock {Boundary relations and generalized resolvents of symmetric
  operators.}
\newblock {\em {Russ. J. Math. Phys.}}, 16(1):17--60, 2009.

\bibitem{Dijskma1974}
A.~{Dijksma} and H.~{de Snoo}.
\newblock {Self-adjoint extensions of symmetric subspaces.}
\newblock {\em {Pac. J. Math.}}, 54(1):71--100, 1974.

\bibitem{duvaut1976inequalities}
G.~Duvaut and J.~L. Lions.
\newblock {\em {Inequalities in mechanics and physics}}.
\newblock Grundlehren der mathematischen Wissenschaften. Springer-Verlag, 1976.

\bibitem{Hassi2012}
S.~Hassi, H.~S.~V. {de Snoo}, and F.~H. Szafraniec.
\newblock {\em Operator Methods for Boundary Value Problems}.
\newblock Cambridge University Press, 2012.

\bibitem{hu2000handbook}
S.~Hu and N.~S. Papageorgiou.
\newblock {\em {Handbook of Multivalued Analysis}}, volume 2: Applications, of
  {\em Mathematics and its applications}.
\newblock Kluwer Academic Publishers, 2000.

\bibitem{Jacob_Zwart2012}
B.~Jacob and H.~J. Zwart.
\newblock {\em {Linear Port-Hamiltonian systems on infinite-dimensional
  spaces.}}
\newblock {Operator Theory: Advances and Applications 223. Basel:
  Birkh{\"a}user. xii, 217~p.}, 2012.

\bibitem{leGorrec2005}
Y.~{Le Gorrec}, H.~Zwart, and B.~Maschke.
\newblock Dirac structures and boundary control systems associated with
  skew-symmetric differential operators.
\newblock {\em SIAM journal on control and optimization}, 44(5):1864--1892,
  2005.

\bibitem{Migorski_2009}
S.~Mig{\'o}rski, A.~Ochal, and M.~Sofonea.
\newblock Solvability of dynamic antiplane frictional contact problems for
  viscoelastic cylinders.
\newblock {\em Nonlinear Analysis: Theory, Methods \& Applications},
  70(10):3738--3748, 2009.

\bibitem{Migorski2011}
S.~Mig{\'o}rski, A.~Ochal, and M.~Sofonea.
\newblock {Analysis of a frictional contact problem for viscoelastic materials
  with long memory.}
\newblock {\em Discrete Contin. Dyn. Syst., Ser. B}, 15(3):687--705, 2011.

\bibitem{Minty}
G.~Minty.
\newblock Monotone (nonlinear) operators in a hilbert space.
\newblock {\em Duke Math. J.}, 29, 1962.

\bibitem{Morosanu}
G.~Morosanu.
\newblock {\em Nonlinear evolution equations and applications}.
\newblock Springer, 2nd edition, 1988.

\bibitem{necas2011direct}
J.~Ne\v{c}as, \v{S}. Ne\v{c}asov{\'a}, and C.~Simader.
\newblock {\em Direct Methods in the Theory of Elliptic Equations}.
\newblock Springer monographs in mathematics. Springer, 2011.

\bibitem{Phillips1959}
R.~{Phillips}.
\newblock {Dissipative operators and hyperbolic systems of partial differential
  equation.}
\newblock {\em {Trans. Am. Math. Soc.}}, 90:193--254, 1959.

\bibitem{Picard}
R.~Picard.
\newblock {A structural observation for linear material laws in classical
  mathematical physics.}
\newblock {\em Math. Methods Appl. Sci.}, 32(14):1768--1803, 2009.

\bibitem{Picard2012_Impedance}
R.~Picard.
\newblock A class of evolutionary problems with an application to acoustic
  waves with impedance type boundary conditions.
\newblock In {\em Spectral Theory, Mathematical System Theory, Evolution
  Equations, Differential and Difference Equations}, volume 221 of {\em
  Operator Theory: Advances and Applications}, pages 533--548. Springer Basel,
  2012.

\bibitem{Picard2012_mother}
R.~Picard.
\newblock {Mother Operators and their Descendants.}
\newblock {\em Journal of Mathematical Analysis and Applications},
  403(1):54--62, 2013.

\bibitem{Picard_McGhee}
R.~Picard and D.~McGhee.
\newblock {\em {Partial differential equations. A unified Hilbert space
  approach.}}
\newblock {de Gruyter Expositions in Mathematics 55. Berlin: de Gruyter.
  xviii}, 2011.

\bibitem{Picard2012_comprehensive_control}
R.~Picard, S.~Trostorff, and M.~Waurick.
\newblock {On a comprehensive Class of Linear Control Problems.}
\newblock Technical report, TU Dresden, 2012.
\newblock arXiv:1208.3140.

\bibitem{Picard2012_conservative}
R.~Picard, S.~Trostorff, and M.~Waurick.
\newblock A note on a class of conservative, well-posed linear control systems.
\newblock In M.~Reissig and M.~Ruzhansky, editors, {\em Progress in Partial
  Differential Equations}, volume~44 of {\em Springer Proceedings in
  Mathematics \& Statistics}, pages 261--286. Springer International
  Publishing, 2013.

\bibitem{Picard2013_fractional}
R.~Picard, S.~Trostorff, and M.~Waurick.
\newblock {On Evolutionary Equations with Material Laws Containing Fractional
  Integrals.}
\newblock Technical report, TU Dresden, 2013.
\newblock arXiv:1304.7620, submitted.

\bibitem{Picard2012_boundary_control}
R.~{Picard}, S.~{Trostorff}, and M.~{Waurick}.
\newblock {On a class of boundary control problems.}
\newblock {\em {Oper. Matrices}}, 8(1):185--204, 2014.

\bibitem{Picard2013_nonauto}
R.~Picard, S.~Trostorff, M.~Waurick, and M.~Wehowski.
\newblock {On Non-autonomous Evolutionary Problems.}
\newblock {\em J. Evol. Equ.}, 13(4):751--776, 2013.

\bibitem{Robinson1999}
S.~M. {Robinson}.
\newblock {Composition duality and maximal monotonicity.}
\newblock {\em {Math. Program.}}, 85(1 (A)):1--13, 1999.

\bibitem{Rockafellar}
R.~T. Rockafellar.
\newblock On the maximal monotonicity of subdifferential mappings.
\newblock {\em Pac. J. Math.}, 33:209--216, 1970.

\bibitem{Sofonea2009}
M.~Sofonea and A.~Matei.
\newblock {\em {Variational inequalities with applications. A study of
  antiplane frictional contact problems.}}
\newblock {Advances in Mechanics and Mathematics 18. New York, NY: Springer.
  xix}, 2009.

\bibitem{Trostorff2012_NA}
S.~Trostorff.
\newblock {An alternative approach to well-posedness of a class of differential
  inclusions in Hilbert spaces.}
\newblock {\em Nonlinear Anal., Theory Methods Appl., Ser. A, Theory Methods},
  75(15):5851--5865, 2012.

\bibitem{Trostorff2012_nonlin_bd}
S.~Trostorff.
\newblock {Autonomous Evolutionary Inclusions with Applications to Problems
  with Nonlinear Boundary Conditions.}
\newblock {\em Int. J. Pure Appl. Math.}, 85(2):303--338, 2013.

\bibitem{Trostorff2012_integro}
S.~Trostorff.
\newblock {On Integro-Differential Inclusions with Operator-valued Kernels.}
\newblock {\em Math. Methods Appl. Sci.}, 2014.
\newblock \href {http://dx.doi.org/10.1002/mma.3111}
  {\path{doi:10.1002/mma.3111}}.

\bibitem{Trostorff2013_nonautoincl}
S.~{Trostorff} and M.~{Wehowski}.
\newblock {Well-posedness of non-autonomous evolutionary inclusions.}
\newblock {\em {Nonlinear Anal., Theory Methods Appl., Ser. A, Theory
  Methods}}, 101:47--65, 2014.

\bibitem{vanderSchaft2002}
A.~{van der Schaft} and B.~Maschke.
\newblock Hamiltonian formulation of distributed-parameter systems with
  boundary energy flow.
\newblock {\em Journal of Geometry and Physics}, 42(1--2):166--194, 2002.

\end{thebibliography}
\end{document}